 \def\versionno{ fgjs   --   version 5.0   --   by jf   --  23.06.23}
\numberwithin{equation}{section}
\makeatletter \@ifundefined{date}{}{\date{}} \makeatother
\theoremstyle{plain}
\newtheorem{thm}{Theorem}[section]
\newtheorem{cor}[thm]{Corollary}
\newtheorem{lem}[thm]{Lemma}
\newtheorem{prop}[thm]{Proposition}
\newtheorem*{thmx}{{\Color Corollary} \ref{thm:Radford}}
\newtheorem*{thmya}{Theorem \ref{thm:pivmodule-pivMorita}}
\newtheorem*{thmyb}{Theorem \ref{bicategories_pivotal_modules}}
\newtheorem*{thmyc}{Theorem \ref{thm:A=B-ZA=ZB}}
\newtheorem*{cors}{Corollary \ref{spherical_pme_invariant}}
\theoremstyle{definition}
\newtheorem{defi}[thm]{Definition}
\newtheorem{rem}[thm]{Remark}
\theoremstyle{definition}
\newtheorem*{rep@lemma}{\rep@title}
\newcommand{\newreplemma}[2]{%
\newenvironment{rep#1}[1]{%
 \def\rep@title{#2 \ref{##1}}%
 \begin{rep@lemma}}%
 {\end{rep@lemma}}}
\newcommand*{\relrelbarsep}{.386ex}
\newcommand*{\relrelbar}{%
  \mathrel{%
      \mathpalette\@relrelbar\relrelbarsep}}
\newcommand*{\@relrelbar}[2]{%
  \raise#2\hbox to 0pt{$\m@th#1\relbar$\hss}%
    \lower#2\hbox{$\m@th#1\relbar$}}
\providecommand*{\rightrightarrowsfill@}{%
      \arrowfill@\relrelbar\relrelbar\rightrightarrows}
\providecommand*{\leftleftarrowsfill@}{%
       \arrowfill@\leftleftarrows\relrelbar\relrelbar}
\providecommand*{\xrightrightarrows}[2][]{%
          \ext@arrow 0359\rightrightarrowsfill@{#1}{#2}}
\providecommand*{\xleftleftarrows}[2][]{%
    \ext@arrow 3095\leftleftarrowsfill@{#1}{#2}}
             \newcommand\Cite[2] {\cite[#1]{#2}}
\newif\if@fewtab\@fewtabtrue
\xdef\hourmin{\number\count255}
\xdef\hourmin{\hourmin:\ifnum\count255<10 0\fi\the\count255}}
\def\ps@draft{\let\@mkboth\@gobbletwo
    \def\@oddfoot{\hbox to 7 cm{\tiny \versionno
       \hfil}\hskip -7cm\hfil\rm\thepage \hfil {\tiny\draftdate}}
    \def\@oddhead{}
    \def\@evenhead{}\let\@evenfoot\@oddfoot}
\def\draftdate{\number\month/\number\day/\number\year\ \ \ \hourmin }
\def\Color          {\color{black}}
\def\Act           {{\triangleright}}   
\def\act           {\,{\Act}\,}         
\def\Actr          {{\triangleleft}}    
\def\actr          {\,{\Actr}\,}        
\def\actF          {\,{\Yright}\,}
\def\ActrF         {{\Yleft}} 
\def\actrF         {\,\ActrF\,} 
\def\be            {\begin{equation}}
\def\bearl         {\begin{array}{l}}
\def\bearll        {\begin{array}{ll}}
\def\boti          {\,{\boxtimes}\,}
\def\cala          {{\mathcal A}}
\def\Cala          {{\!\mathcal A}}
\def\calb          {{\mathcal B}}
\def\calc          {{\mathcal C}}
\def\Calc          {{\!\mathcal C}}
\def\cald          {{\mathcal D}}
\def\Cald          {{\!\mathcal D}}
\def\calm          {{\mathcal M}}
\def\Calm          {{\!\mathcal M}}
\def\caln          {{\mathcal N}}
\def\Caln          {{\!\mathcal N}}
\def\call          {{\mathcal L}}
\def\calx          {{\mathcal X}}
\def\calz          {{\mathcal Z}}
\def\cir           {\,{\circ}\,}
\def\Colon         {:\quad}
\def\dualcat       {{\cala^*_\calm}}                 
\def\Dualcat       {{\!\cala^*_\calm}}               
\def\DD            {\mathbb{D}}
\def\DDA           {\mathbb{D}_{\Cala}}
\def\dd            {^{\vee\vee}}
\def\ldd           {{}^{\vee\vee\!}}
\def\ee            {\end{equation}}
\def\eear          {\end{array}}
\def\Enumerate     {\def\leftmargini{1.34em}~\\[-1.42em]\begin{enumerate}}
\def\Enumeratei    {
	~\\[-1.42em]\begin{enumerate}[{\rm (i)}]\addtolength\itemsep{-5pt}}
\def\eq            {\,{=}\,}  
\def\findim        {fini\-te-di\-men\-si\-o\-nal}
\def\Funle         {{\mathrm{Lex}}}                  
\def\Funre         {{\mathrm{Rex}}}                  
\def\Fun           {{\mathrm{Fun}}}            
\def\FunM          {{\Fun_\cala(\calm,\cala)}}
\def\H             {{}^{\#\!}}                       
\def\HH            {{}^{\#\!\!}}                     
\def\Hom           {\mathrm{Hom}}
\def\HomA          {\ensuremath{\Hom_\cala}}
\def\HomF          {\ensuremath{\Hom_\FunM}}
\def\HomM          {\ensuremath{\Hom_\calm}}
\def\HomDual       {\ensuremath{\Hom_\dualcat}}
\def\id            {{\mathrm{id}}}
\def\iHom          {\underline{\Hom}}
\def\icoHom        {\underline{\mathrm{coHom}}}
\def\iHomM         {\underline{\Hom}_\calm}
\def\iN            {\,{\in}\,}
\def\Itemize       {
	           ~\\[-1.65em] \begin{itemize}\addtolength\itemsep{-6pt}}
\def\ko            {{\ensuremath{\Bbbk}}}    
\def\la            {^{\rm la}}             
\def\lla           {^{\rm lla}}           
\def\lllla         {^{\rm lllla}}
\def\Mor           {\mathbf{\mathbbm{M}}}
\def\Mod           {{\rm Mod}}
\def\Nat           {\mathrm{Nat}}
\def\Nak           {\mathbb{N} }
\def\Nakr          {\mathbb{N}_\calm^r}
\def\Nakl          {\mathbb{N}_\calm^l}
\newcommand\nxl[1] {\\[#1mm]}
\newcommand\Nxl[1] {\\[-1.3em]\\[0.#1em]}
\def\one           {{\bf1}}
\def\opp           {^{\rm opp}}              
\def\Ot            {{\otimes}}
\def\ot            {\,{\otimes}\,}
\newcommand\rarr[1]{\xrightarrow{~#1~}}
\newcommand\Rarr[1]{\,{\xrightarrow{\,#1\,}}\,}
\def\ra            {^{\rm ra}}              
\def\re            {^{\rm re}}              
\def\rra           {^{\rm rra}}            
\def\rrrra           {^{\rm rrrra}}            
\newcommand{\Se}{\mathbb{S}}              
\newcommand{\lSe}{\overline{\mathbb{S}}}              
\def\Times         {\,{\times}\,}
\def\To            {\,{\to}\,}
\def\Xcong         {\xrightarrow{~\cong~}}
\def\xcong         {\,{\Xcong}\,}
\def\xnatiso       {{\;\xRightarrow{\,\cong\,}\,}}
\def\xNatiso       {{\;\xRightarrow{~\cong~}\,}}    
\def\un            {{\mathbf{1}_\Cala}}
\def\Vee           {{}^{\vee\!}}
\newcommand\void[1]{}
\def\xcong         {\,{\xrightarrow{~\cong\,}}\,}
\def\xsimeq        {\,{\xrightarrow{~\simeq~}}\,}
\def\Gsum    {\bigoplus_{g\in G} }
\newcommand\mixt[2]   {#1 \,{\odot}\, #2}        
\newcommand\mixto[2]  {#1 {\odot} #2}            
\newcommand\mixtd[2]  {#1 \,{\boxdot}\, #2}      
\newcommand\mixtdo[2] {#1 {\boxdot} #2}          
\def\Umixt            {\underline{\odot}}
\newcommand\umixt[2]  {#1 \,{\Umixt}\, #2}       
\newcommand\umixto[2] {#1 {\Umixt} #2}           
\def\Umixtd           {\underline{\boxdot}}
\newcommand\umixtdo[2]{#1 {\Umixtd} #2}          
\newcommand\bmixt[2]  {#1 \,{\ominus}\, #2}      
\newcommand\bmixto[2] {#1 {\ominus} #2}          
\newcommand\ubmixt[2] {#1 \,\underline{\ominus}\, #2} 
\newcommand\ubmixto[2] {#1 \underline{\ominus} #2}    
\newcommand\bmixtd[2] {#1 \,{\boxminus}\, #2}    
\newcommand\bmixtdo[2]{#1 {\boxminus} #2}        
\newcommand\ubmixtd[2] {#1 \,\underline{\boxminus}\, #2} 
\newcommand\ubmixtdo[2]{#1 \underline{\boxminus} #2}     
\begin{document}


\thispagestyle{empty}
\begin{flushright}
   {\sf ZMP-HH/22-13}\\
   {\sf Hamburger$\;$Beitr\"age$\;$zur$\;$Mathematik$\;$Nr.$\;$927}\\[2mm]
   July 2022
\end{flushright}

\vskip 2.7em

\begin{center}
	{\bf \Large Spherical Morita contexts and relative Serre functors}

\vskip 2.6em

{\large 
J\"urgen Fuchs\,$^{\,a},~~$
C\'esar Galindo\,$^{\,b},~~$
David Jaklitsch\,$^{\,c},~~$
Christoph Schweigert\,$^{\,c}$
}

\vskip 15mm

 \it$^a$
 Teoretisk fysik, \ Karlstads Universitet\\
 Universitetsgatan 21, \ S\,--\,651\,88\, Karlstad
 \\[9pt]
 \it$^b$
 Departamento de Matem\'aticas, Universidad de los Andes\\
 Carrera 1 \# 18A - 12, Edificio H, Bogot\'a, Colombia
 \\[9pt]
 \it$^c$
 Fachbereich Mathematik, \ Universit\"at Hamburg\\
 Bereich Algebra und Zahlentheorie\\
 Bundesstra\ss e 55, \ D\,--\,20\,146\, Hamburg

\end{center}

\vskip 3.7em

\noindent{\sc Abstract}\\[3pt]
The Morita context provided by an exact module category over
a finite tensor category gives a 
two-object bicategory with duals. Right and left duals of objects 
in the module category are given by internal Homs and coHoms, 
respectively. We express the double duals in terms of relative 
Serre functors, which leads to a Radford isomorphism for module 
categories. There is a bicategorical version of the Radford $S^4$ theorem: on the bicategory of a Morita context, the relative Serre functors
assemble into a pseudo-functor, and the Radford isomorphisms furnish 
a trivialization of the square of this pseudo-functor, i.e.\ of the 
fourth power of the duals.

 \noindent
We also show that the Morita bicategories coming from pivotal 
exact module categories are pivotal as bicategories, leading to 
the notion of pivotal Morita equivalence. This equivalence of 
tensor categories amounts to the equivalence of their 
bicategories of pivotal module categories. Furthermore, we 
introduce the notion of a spherical module category; it ensures 
that all categories in the Morita context of a spherical module 
category are spherical. Our results are motivated by and have 
applications to topological field theory.
                         
\newpage
\tableofcontents{}
\newpage


\section{Introduction}

Two familiar mathematical structures with countless applications are
(finite-dimensional) algebras and symmetric Frobenius algebras.
It is important to notice that a Frobenius algebra
is an algebra with additional structure (which can be expressed in different ways,
e.g.\ as a Frobenius form or as a homotopy fixed point). Topological field theory
in two dimensions relates these two types of structures to two different flavors
of topological field theories: algebras to framed theories,
and symmetric Frobenius algebras to oriented theories. Algebras 
and symmetric Frobenius algebras are thus associated with very different geometry. 
Algebraically, they should be carefully told apart as well.

A similar pattern is present in higher dimensions: finite tensor categories are
related to framed modular functors, while pivotal finite tensor categories are
related to oriented modular functors. Here a pivotal structure on a finite
tensor category is the additional datum of a monoidal trivialization of the double
dual. Finite tensor categories
and pivotal finite tensor categories should be carefully told apart, too.

It is natural to think about finite tensor categories as a tricategory, with 
bimodule categories as 1-morphisms, bimodule functors as 2-morphisms, and 
bimodule natural transformations as 3-morphisms. Invertible bimodule categories provide a particularly important subcategory. We could call this subcategory the Morita groupoid of finite tensor categories. An invertible bimodule category is in particular exact. Now
an indecomposable exact left \emph{module} category $\calm$ over a finite
tensor category $\cala$ is automatically also a right module category over the dual
finite tensor category $\cala_\calm^*$ and is thus a bimodule category. In fact 
-- as we will show in Theorem \ref{Morita_context_exact_module} --
it is even an invertible bimodule category and thus provides an invertible 
1-morphism. This theory is well-understood (see \cite{EGno}, and \cite{Mue} for 
the case of $*$-categories). Furthermore, it has applications to invertible 
topological defects between framed modular functors of Turaev-Viro type \cite{DSPS}.

Pivotal finite tensor categories are not only important for topological field theory,
but e.g.\ also \cite{Sh,fuSc25} as a prolific source of Frobenius algebras in tensor
categories. It is therefore highly desirable to have a similar Morita groupoid 
of \emph{pivotal} finite tensor categories at hand. Several concepts 
pertinent to such a structure have appeared 
recently, such as the notion of a relative Serre functor \cite{fuScSc} which 
allows one to introduce the notion of a pivotal module category. However, so far
they have not been assembled to a consistent \emph{pivotal Morita theory}.\,%
 \footnote{~This has not prevented physicists from computing successfully with
 structures that correspond to a pivotal Morita context as if it were
 just a pivotal monoidal category, see e.g.\ \cite{bhar}.}
The primary contribution of this paper is to develop such a theory. 

\medskip

A summary of our approach and of the organization of the paper is as follows.
The category of finite-dimensional modules over a finite-dimensional algebra
is a finite abelian category. The basic stimulus for Morita theory is to see 
this category as more fundamental than the algebra itself. Moreover, the
Eilenberg-Watts theorem implies that Morita equivalence of algebras can be 
captured in terms of bimodules. This story repeats itself in higher categorical
dimensions, replacing algebras by finite tensor categories $\cala$ and bimodules by
bimodule categories over them. This leads to the notion of categorical Morita 
equivalence. It is formulated in terms of exact module categories, i.e.\ 
$\cala$-module categories $\calm$ for which the internal Hom functor 
$\iHom(m,-)\colon \calm \,{\to}\, \cala$ is exact for every object $m\iN \calm$. 

After recalling some pertinent background in Section \ref{sec:background} so as 
to fix notation, we review and extend this categorical Morita theory in Section 
\ref{sec:Morita}. We show in Theorem \ref{Morita_bicategory} that a Morita context 
gives rise to a bicategory with two objects. Known coherence results for bicategories
then lead to strictification results for Morita contexts. Next we show that any 
exact module category induces a Morita context. This Morita context is actually 
\emph{strong}, i.e.\ the bimodule functors are equivalences. In Theorem 
\ref{strong_Morita} we provide a converse: any strong Morita context comes 
from an exact module category.

Section \ref{dualities_Morita} starts our study of pivotal Morita theory. We
first show that the bicategory induced by an exact module category is a 
bicategory with duals. To this end it is crucial to realize that
the duals of objects in the module category are module functors given by internal 
Homs and coHoms, respectively, see Definition \ref{def:duals}. When thinking
about the bimodule functors in the Morita context as ``mixed tensor products'',
one can work with the duals in the bicategory associated with a strong Morita context
in much the same way as in the 
familiar case of a single rigid monoidal category. Among the resulting insights 
are the statements that left and right duals are inverse and define a pseudo-functor 
from the bicategory to the bicategory with reversed 1- and 2-morphisms 
(Proposition \ref{dual_properties}), and that all internal Homs and 
coHoms can be expressed in terms of duals and mixed tensor products (Proposition 
\ref{Morita_adjunctions} and Remark \ref{iHoms_icoHoms_Morita}).
We proceed to compute the double dual endofunctors on the module categories. In 
Proposition \ref{double_duals} we show that they are given by the relative Serre
functors. A consequence is a Radford isomorphism for module categories:
\begin{thmx} 
Let $\cala$ be a finite tensor category and $\calm$ an exact $\cala$-module. There
is a natural isomorphism
  $$
  \DD_\Cala^{}\act
  {-}\actr\DD_{\!\dualcat}^{-1} \xRightarrow{~\cong~\:} \Se_\calm^\cala\cir\Se_\calm^\cala
  $$
of $($twisted$)$ bimodule functors, where $\Se_\calm^\cala$ is the relative Serre
functor of $\calm$ and $\DD_\Cala^{}$ and $\DD_{\!\dualcat}$ are the distinguished
invertible objects of $\cala$ and $\dualcat$, respectively.
\end{thmx}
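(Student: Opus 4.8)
The plan is to obtain the statement as a quick consequence of Proposition~\ref{double_duals} together with the classical Radford $S^4$ theorem for the finite tensor categories $\cala$ and $\dualcat$, working inside the bicategory $\mathbf{B}$ attached to the Morita context of $\calm$. Recall the relevant features of $\mathbf{B}$: it has two objects, with $\mathbf{B}(\cala,\cala)\simeq\cala$ and $\mathbf{B}(\dualcat,\dualcat)\simeq\dualcat$ as monoidal categories, while the hom-category from one object to the other is $\calm$ itself, horizontal composition there being given by the module actions $\act$ and $\actr$ and by internal Homs. By Proposition~\ref{dual_properties} taking right duals of 1-morphisms is a pseudo-functor from $\mathbf{B}$ to the bicategory with reversed 1- and 2-morphisms, so that the double-dual operation $(-)^{\vee\vee}$ is a pseudo-endofunctor of $\mathbf{B}$ fixing the two objects. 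Its restriction to $\mathbf{B}(\cala,\cala)$ and to $\mathbf{B}(\dualcat,\dualcat)$ is the ordinary double dual of $\cala$ and of $\dualcat$, and by Proposition~\ref{double_duals} its restriction to the hom-category $\calm$ is the relative Serre functor $\Se_\calm^\cala$, regarded as a twisted bimodule functor whose twists on the two sides are the respective double duals.

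Since $(-)^{\vee\vee}$ is a pseudo-endofunctor, its restriction to the hom-category $\calm$ is a single twisted bimodule endofunctor, and composing a pseudo-functor with itself composes these restrictions; hence the fourth power of the dual, restricted to $\calm$, is exactly $\Se_\calm^\cala\cir\Se_\calm^\cala$. Thus the assertion to be proved is equivalent to identifying the fourth power of the dual on $\calm$ with the twisted bimodule functor $\DD_\Cala\act(-)\actr\DD_{\!\dualcat}^{-1}$.

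To prove this, I would first restrict the fourth-power pseudo-endofunctor to the two monoidal hom-categories, where it is the quadruple dual ${-}^{\vee\vee\vee\vee}$; by the classical Radford theorem \cite{EGno} this is monoidally isomorphic to $\DD_\Cala\ot(-)\ot\DD_\Cala^{-1}$ on $\cala$ and to $\DD_{\!\dualcat}\ot(-)\ot\DD_{\!\dualcat}^{-1}$ on $\dualcat$. As the fourth-power pseudo-endofunctor respects the left $\cala$-action and the right $\dualcat$-action on $\calm$, these two monoidal isomorphisms already exhibit the fourth power on $\calm$ as a twisted bimodule functor carrying exactly the twists of the candidate functor $\DD_\Cala\act(-)\actr\DD_{\!\dualcat}^{-1}$. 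To upgrade this agreement of twists to an actual isomorphism of twisted bimodule functors I would use that $\calm$ is an \emph{invertible} bimodule category (Theorem~\ref{Morita_context_exact_module}) and exact: choosing a progenerator $m_0\iN\calm$ and setting $A\eq\iHomM(m_0,m_0)$, one has $\calm\simeq\cala_A$, the category of right $A$-modules in $\cala$, so that every object of $\calm$ is a cokernel of a map between objects of the form $X\act m_0$ with $X\iN\cala$. The comparison 2-isomorphism is then propagated from the monoidal hom-categories to all of $\calm$ using the composition coherence cells of the dual pseudo-functor and naturality, and its value on $m_0$ is pinned down by the explicit description of the duals in $\mathbf{B}$ through internal Homs and coHoms (Definition~\ref{def:duals}, Proposition~\ref{Morita_adjunctions}), in which the distinguished invertible objects of $\cala$ and $\dualcat$ enter. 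Composing with the isomorphism of the previous step then yields the asserted natural isomorphism.

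The principal obstacle is this last step: passing from the correct \emph{twist} to the correct \emph{functor} on $\calm$, that is, actually evaluating the fourth power of the dual on a progenerator and organizing the propagation so that the result is a morphism of twisted bimodule functors rather than merely of underlying functors. It is convenient first to replace $\mathbf{B}$ by a strict model, which is legitimate by the coherence results behind Theorem~\ref{Morita_bicategory}, so that the compatibility of the duals with composition, the module constraints, and the classical Radford isomorphisms for $\cala$ and $\dualcat$ can all be manipulated diagrammatically; what then remains is the bookkeeping verification that the twists coming from $\Se_\calm^\cala\cir\Se_\calm^\cala$ and from $\DD_\Cala\act(-)\actr\DD_{\!\dualcat}^{-1}$ agree on the nose once these Radford isomorphisms have been inserted.
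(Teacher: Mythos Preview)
Your approach is genuinely different from the paper's and, as you yourself flag, the last step is a real gap rather than routine bookkeeping.

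The paper's proof is a two-line computation that never touches the bicategory $\Mor$ at all. It rests on Proposition~\ref{distinguished_dualcat}, which identifies $\DD_{\!\dualcat}\cong\Nakl\cir\lSe_\calm^\cala$ via the relation \eqref{Nakayama_Serre} between Nakayama and relative Serre functors (this relation is imported from \cite{fuScSc}). Composing both sides with $\Se_\calm^\cala\cir\Se_\calm^\cala$ and cancelling $\lSe_\calm^\cala$ against $\Se_\calm^\cala$ immediately gives $\DD_{\!\dualcat}\cir(\Se_\calm^\cala)^2\cong\Nakl\cir\Se_\calm^\cala\cong\DD_\Cala\act-$. The distinguished invertible objects enter \emph{only} through \eqref{Nakayama_Serre}; they do not appear anywhere in Definition~\ref{def:duals} or Proposition~\ref{Morita_adjunctions}, contrary to what you assert.

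Your proposed route cannot be completed as sketched. Knowing that $(\Se_\calm^\cala)^2$ and $\DD_\Cala\act(-)\actr\DD_{\!\dualcat}^{-1}$ carry the same twists on both sides says only that their difference is an honest $(\cala,\overline{\dualcat})$-bimodule endofunctor of $\calm$; it does not produce an isomorphism between them. The ``propagation from a progenerator'' idea does not help: evaluating the quadruple dual on $m_0$ just returns $(\Se_\calm^\cala)^2(m_0)$, which is the unknown, and nothing in the internal Hom description pins this down in terms of $\DD_\Cala$ or $\DD_{\!\dualcat}$. What you would actually need is to extend the Radford isomorphisms of $\cala$ and $\dualcat$ to a pseudo-natural isomorphism $\id_\Mor\Rightarrow(-)^{\vee\vee\vee\vee}$ on the whole bicategory; but that is precisely Theorem~\ref{Thm_Radford_pseudo}, which in the paper is proved \emph{after} and \emph{using} the present theorem. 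So your strategy either leaves the gap open or becomes circular. The missing ingredient throughout is the Nakayama relation \eqref{Nakayama_Serre}, which is what the paper's argument supplies.
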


The relative Serre functors of the categories in a Morita context form a pseudo-functor
on its associated bicategory (see Definition \ref{def_Serre_pseudo_functor}). In 
analogy to {\Color Corollary} \ref{thm:Radford}, there are Radford isomorphims for all the 
categories in the Morita context bicategory. In Theorem \ref{Thm_Radford_pseudo} we show
that these assemble into a trivialization of the square of the relative Serre 
pseudo-functor. Altogether these results establish natural rigid duality structures 
for Morita bicategories.

Whenever one deals with dualities, the question arises whether the double dual
admits a trivialization in an appropriate sense. Such a trivialization, known as
a \emph{pivotal} structure, has important consequences. For instance, it is needed
to define oriented topological field theories. Consequently,
in order to study dualities for oriented topological field theories,
it is central to understand Morita equivalence in a pivotal setting. The interaction
between Morita theory and pivotal structures is the subject of Section
\ref{sec:piv}. The notion of a pivotal structure for monoidal and module categories 
is well understood, see Definition \ref{def:pivmodule}. The general definition of a 
pivotal bicategory then directly leads to Definition 
\ref{def:pivMorita} of a \emph{pivotal Morita context}. 
These different aspects of pivotality fit together well: 

\begin{thmya} 
The Morita context associated with a pivotal module category $\calm$ over a 
pivotal tensor category $\cala$ is a pivotal Morita context.
\end{thmya}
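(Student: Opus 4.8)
The plan is to unwind both sides into explicit data and match them. A pivotal structure on a bicategory with duals is, by Definition \ref{def:pivMorita}, a pseudo-natural trivialization of the ``double right dual'' pseudo-functor that is compatible with the monoidal (horizontal composition) structure; concretely, for each object $X$ of the Morita bicategory it gives a $2$-isomorphism $X \xRightarrow{\cong} X^{\vee\vee}$, natural in $1$-morphisms and multiplicative under tensoring. The Morita bicategory attached to $\calm$ has two objects, $\cala$ and $\dualcat$, so a pivotal structure on it amounts to: (i) a pivotal structure on $\cala$ viewed as a one-object bicategory, (ii) a pivotal structure on $\dualcat$, and (iii) compatible trivializations of the double duals of the ``off-diagonal'' $1$-morphisms, which by Proposition \ref{dual_properties} and Proposition \ref{double_duals} are controlled by the relative Serre functor $\Se_\calm^\cala$ and its counterpart $\Se_\calm^{\dualcat}$ on the other side. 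So the first step is to record that a pivotal structure on $\cala$ is given by hypothesis, and that a pivotal structure on $\calm$ (Definition \ref{def:pivmodule}) is exactly a trivialization of $\Se_\calm^\cala$ as a twisted module functor compatible with the pivotal structure of $\cala$.

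Next I would use the known transport of pivotal structures along Morita equivalence: a pivotal structure on the $\cala$-module $\calm$ induces a pivotal structure on the dual tensor category $\dualcat$, and simultaneously equips $\calm$ with the structure of a pivotal $\dualcat$-module; this is where the earlier material on relative Serre functors being adjoint/inverse to the duals (Proposition \ref{Morita_adjunctions}, Remark \ref{iHoms_icoHoms_Morita}) does the work, since it lets one compute $\Se_\calm^{\dualcat}$ in terms of $\Se_\calm^\cala$ and see that the trivialization of one yields a trivialization of the other. The second step, then, is to produce from the given pivotal data the three pieces (i)–(iii) above: (i) is given, (ii) is the induced pivotal structure on $\dualcat$, and (iii) is the pivotal $\dualcat$-module structure on $\calm$, packaged as a $2$-isomorphism from the off-diagonal $1$-morphism to its double dual via the identification of double duals with $\Se\circ\Se$ from Proposition \ref{double_duals}.

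The third step is to check the two coherence conditions of Definition \ref{def:pivMorita}: pseudo-naturality of the collection of $2$-isomorphisms under horizontal composition of $1$-morphisms, and the monoidal/tensor-compatibility axiom. Both reduce to the compatibility of the pivotal structure on $\calm$ with the module associativity constraints together with the analogous compatibility for $\cala$ and $\dualcat$ — in other words, to the defining axioms of a pivotal module category (Definition \ref{def:pivmodule}) and the coherence of the pseudo-functor structure on the relative Serre functors established in Theorem \ref{Thm_Radford_pseudo}. Since the Radford $2$-isomorphisms and the relative Serre pseudo-functor have already been shown to cohere, the pivotal trivialization — being a square root of that data in the appropriate sense — inherits the required coherence by a diagram chase.

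The main obstacle I expect is step three, specifically verifying that the off-diagonal trivialization is compatible with horizontal composition on \emph{both} sides at once: one must check that composing an $\cala$-$\dualcat$ bimodule $1$-morphism with a $\dualcat$-$\dualcat$ one (or with an $\cala$-$\cala$ one) is respected by the pivotal $2$-cells, which means reconciling the pivotal structure of $\cala$, the pivotal structure of $\dualcat$, and the pivotal module structure of $\calm$ through the associativity and unit constraints of the bicategory. This is a genuine coherence computation rather than a formal consequence, and it is here that the hypothesis that \emph{all three} pivotal structures arise from the single datum of a pivotal structure on $\calm$ is essential: it forces the required compatibilities, because each of the three is defined in terms of $\Se_\calm^\cala$ and the pivotal structure of $\cala$. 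I would organize this final check by strictifying the Morita bicategory (using Theorem \ref{Morita_bicategory} and the attendant strictification) so that the only non-trivial constraints left to track are the pivotal $2$-cells themselves, reducing the verification to a manageable diagram in the strictified setting.
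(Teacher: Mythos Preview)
Your outline follows the same overall arc as the paper's proof --- define the component $2$-isomorphisms $\textbf{P}$ on each Hom-category and then verify compatibility with horizontal composition --- but there is a genuine gap in how you propose to handle the coherence step, and a missing intermediate result.

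First, the missing piece: the paper does not pass directly from the one-sided pivotal structure on ${}_\cala\calm$ to the bicategorical pivotal structure. It first proves (Lemma \ref{pivotal_FunM}) that $\calm$ is a pivotal $(\cala,\overline{\dualcat})$-\emph{bimodule} and that $\FunM$ is a pivotal $(\overline{\dualcat},\cala)$-bimodule, with explicit formulas \eqref{FunM_piv} for the pivotal structure $\widehat{p}_H$ on $\FunM$. Your decomposition into ``(i) $\cala$, (ii) $\dualcat$, (iii) off-diagonal'' obscures that there are two off-diagonal Hom-categories, $\calm$ and $\FunM$, each needing its own component of $\textbf{P}$, and that upgrading the one-sided pivotal datum on $\calm$ to a two-sided one is itself a nontrivial check (diagram \eqref{pivotal_bimod1} must be verified for the induced $\overline{\dualcat}$-action).

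Second, the gap: your appeal to Theorem \ref{Thm_Radford_pseudo} and the ``square root'' heuristic is not valid. The Radford pseudo-equivalence trivializes $\Se^2$, not $\Se$; coherence of a trivialization of the square does not descend to coherence of a trivialization of the square root. What is actually needed is the compositor data of the Serre pseudo-functor itself (the isomorphisms \eqref{rSerre_compositor} from Definition \ref{def_Serre_pseudo_functor}), together with the compatibility \eqref{Serre_compatibility_composition}. The paper then enumerates eight compatibility triangles for composable pairs $(s,t)$ of $1$-morphisms: six of them reduce to the monoidality of $p$, $\overline{q}$ and the pivotal-bimodule axioms just established, but the two involving the mixed products $\mixto{m}{H}$ and $\mixtdo{H}{m}$ require explicit diagram chases using \eqref{Serre_module_functor} and \eqref{Serre_compatibility_composition}. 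These last two do not follow from any previously recorded axiom and must be checked by hand; your proposal to absorb them into a strictification-plus-Radford argument would not close them.
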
 

\noindent
This result is by no means
trivial: the pivotality of a module category $\calm$ just imposes a restriction
on $\calm$ as a one-sided module; in contrast, the pivotality of the Morita context 
implies that $\calm$ is pivotal as a \emph{bi}module category.
 \\
Our study of pivotality culminates in the following statements:

\begin{thmyb} 
Two pivotal tensor categories are pivotal Morita equivalent if and only if 
their associated $2$-categories of pivotal module categories, module functors
and module natural transformations are equivalent as pivotal bicategories.
\end{thmyb}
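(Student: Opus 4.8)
The plan is to prove the two implications of Theorem \ref{bicategories_pivotal_modules} separately, using the dictionary between pivotal Morita contexts and pivotal module categories that the paper has already set up.

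\medskip

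\noindent\textbf{Setup.}
Let $\cala$ and $\calb$ be pivotal finite tensor categories. For the "only if" direction, assume $\cala$ and $\calb$ are pivotal Morita equivalent, i.e.\ there is an exact $\cala$-module category $\calm$ with $\calb \simeq \cala_\calm^*$ such that the induced Morita context carries a pivotal structure restricting to the given pivotal structures on $\cala$ and $\calb$. I would first unpack what the $2$-category $\mathrm{PivMod}(\cala)$ of pivotal $\cala$-module categories \emph{is}: an object is an exact $\cala$-module category equipped with a trivialization of the double-dual endofunctor compatible with the relative Serre functor (Definition \ref{def:pivmodule}); a $1$-morphism is a module functor commuting with relative Serre functors up to the pivotal data; a $2$-morphism is a module natural transformation. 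The first step is then to produce, from the invertible pivotal bimodule category ${}_\cala\calm_\calb$, an equivalence of bicategories $F\colon \mathrm{PivMod}(\cala) \to \mathrm{PivMod}(\calb)$ by the assignment $\caln \mapsto \caln \boxtimes_\cala \calm$ (relative Deligne product), with the reverse equivalence given by $\calm^{\vee} \boxtimes_\calb {-}$. That $F$ is a biequivalence at the level of \emph{plain} module categories is the classical statement that Morita-equivalent finite tensor categories have equivalent $2$-categories of exact module categories; the genuinely new content is that the pivotal structures are transported correctly.

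\medskip

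\noindent\textbf{Key steps, "only if" direction.}
The core computation is to show that $F$ sends pivotal module categories to pivotal module categories, i.e.\ that the double dual and relative Serre functor of $\caln \boxtimes_\cala \calm$ over $\calb$ are controlled by those of $\caln$ over $\cala$ together with the pivotal datum on $\calm$. Here I would invoke Proposition \ref{double_duals} (the double dual endofunctor is the relative Serre functor) and the Radford isomorphism of Theorem \ref{thm:Radford} in the form it takes over the Morita bicategory (Theorem \ref{Thm_Radford_pseudo}): the relative Serre pseudo-functor on the Morita bicycategory of $\calm$ is the obstruction, and a pivotal structure on the Morita context is exactly a trivialization of that pseudo-functor restricting correctly on the two corners. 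Composing $\caln$ with the invertible $1$-morphism $\calm$ in this bicategory, the relative Serre functor of the composite is the composite of relative Serre functors (pseudo-functoriality, Definition \ref{def_Serre_pseudo_functor}), so the pivotal trivialization on $\caln$ and the one coming from the pivotal Morita structure on $\calm$ combine to a pivotal trivialization on $\caln \boxtimes_\cala \calm$. Naturality in $1$- and $2$-morphisms is then a (routine but lengthy) diagram chase using coherence of the Radford isomorphisms. This produces the pivotal biequivalence.

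\medskip

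\noindent\textbf{Key steps, "if" direction.}
Conversely, suppose $\mathrm{PivMod}(\cala) \simeq \mathrm{PivMod}(\calb)$ as pivotal bicategories. I would extract the Morita equivalence from the images of the distinguished objects: $\cala$ itself, viewed as the regular pivotal $\cala$-module category, is a distinguished object of $\mathrm{PivMod}(\cala)$ — concretely, it is characterized (up to equivalence) as the module category whose dual $\cala_\cala^* \simeq \cala$ and whose relative Serre functor is the identity, equipped with the given pivotal structure of $\cala$. Let $\calm \in \mathrm{PivMod}(\calb)$ be the image of this regular module category under the given biequivalence; then $\calm$ is an exact $\calb$-module category, and since a biequivalence induces equivalences of endomorphism categories it gives $\cala \simeq \mathrm{End}_{\mathrm{PivMod}(\calb)}(\calm) \simeq \calb_\calm^*$ as tensor categories (this last step uses Theorem \ref{thm:A=B-ZA=ZB} or the underlying identification of the dual category with the endomorphism category in $\mathrm{Mod}(\calb)$). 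Moreover, because the biequivalence is pivotal, this equivalence $\cala \simeq \calb_\calm^*$ and the module category $\calm$ carry compatible pivotal data; feeding these through Theorem \ref{thm:pivmodule-pivMorita} — which says a pivotal module category over a pivotal tensor category induces a pivotal Morita context — yields a pivotal Morita context connecting $\cala$ and $\calb$, i.e.\ pivotal Morita equivalence.

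\medskip

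\noindent\textbf{Main obstacle.}
The hard part will be the bookkeeping of pivotal data under the relative Deligne product, specifically showing that the Radford/relative-Serre trivializations compose \emph{coherently} — that the pivotal structure produced on $\caln \boxtimes_\cala \calm$ is independent of choices and natural in $\caln$. This is where one must carefully use that a pivotal structure on a bicategory with duals is a modification satisfying the pivotality axioms, and verify that the composite modification inherits those axioms; the subtlety, flagged already in the discussion after Theorem \ref{thm:pivmodule-pivMorita}, is that pivotality of $\calm$ as a one-sided module is weaker than pivotality of the whole Morita context, so one genuinely needs the pivotal Morita hypothesis (not merely a pivotal one-sided module) to make the transport go through. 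A secondary technical point is ensuring that "equivalence of pivotal bicategories" is the right notion (a biequivalence whose pseudo-functors are pivotal and whose unit/counit are compatible with pivotal structure), so that it can be both produced in the forward direction and exploited in the backward direction; I would pin this down via Definition \ref{def:pivMorita} and the general notion of pivotal pseudo-functor used there.
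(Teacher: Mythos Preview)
Your overall strategy matches the paper's: in the forward direction transport pivotal module categories along the Morita equivalence via a functor of the form $\caln \mapsto \Fun_\cala(\calm,\caln)$ (equivalently a relative Deligne product), and in the backward direction take the image of the regular module and read off the dual tensor category from its endomorphisms. The backward direction is essentially right; the only slip is the reference to Theorem~\ref{thm:A=B-ZA=ZB}, which is about Drinfeld centers and plays no role here --- the identification $\cala \simeq \calb_\calm^*$ comes straight from the fact that a $2$-equivalence induces equivalences on endomorphism categories, together with Proposition~\ref{eq_relation}.

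The forward direction, however, has a genuine gap in the machinery you invoke. You propose to view $\caln$ as a $1$-morphism in the Morita bicategory $\Mor$ and compose it with $\calm$, then use the Serre pseudo-functor (Definition~\ref{def_Serre_pseudo_functor}) and the Radford pseudo-equivalence (Theorem~\ref{Thm_Radford_pseudo}) on $\Mor$. But $\Mor$ has only two objects, and its $1$-morphisms are objects of $\cala$, $\calm$, $\FunM$, $\overline{\dualcat}$ --- an arbitrary pivotal $\cala$-module $\caln$ is not a $1$-morphism there, so the composition you describe is not available in $\Mor$. Moreover, the Radford machinery concerns the \emph{square} of the Serre pseudo-functor (the quadruple dual), whereas what you need to transport is a trivialization of the Serre functor itself; Theorem~\ref{Thm_Radford_pseudo} does not directly give that.

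The paper bypasses this by a more concrete route: it computes the relative Serre functor of $\Fun_\cala(\calm,\caln)$ explicitly as $H \mapsto H\rra$ (Lemma~\ref{Serre_category_module_functors}), builds the pivotal structure on $\Fun_\cala(\calm,\caln)$ from the pivotal structures on $\calm$ and $\caln$ by the formula $\widehat{p}_H \colon H \Rightarrow H\rra$ in \eqref{FunMN_piv} (Proposition~\ref{pivotality_category_functors} and Lemma~\ref{pivotal_FunM}), and then verifies by a direct diagram chase that the resulting $2$-functor $\Psi\colon \caln \mapsto \Fun_\cala(\calm,\caln)$ preserves the pivotal structure $\textbf{P}$ of $\textbf{Mod}^{\text{piv}}$. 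The key identity is the compatibility \eqref{Serre_compatibility_composition} of the isomorphisms $\Lambda_F$ with composition, not the Radford theorem. Your ``main obstacle'' paragraph correctly anticipates that coherence of the Serre isomorphisms under composition is the heart of the matter, but the tool that delivers it is Proposition~\ref{Serre_and_module_functors}, not Theorem~\ref{Thm_Radford_pseudo}.
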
 

\begin{thmyc} 
If two pivotal categories are pivotal Morita equivalent,
then their Drinfeld centers are equivalent as pivotal braided tensor categories.
\end{thmyc}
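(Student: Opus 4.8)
The plan is to reduce the statement to the existence of a pivotal exact module category witnessing the equivalence, then to run the ``center of a bicategory'' construction on the associated pivotal Morita bicategory, and finally to upgrade that braided equivalence to a pivotal one by feeding in the duality data of Section~\ref{dualities_Morita}. First I would rephrase the hypothesis: by Theorem~\ref{bicategories_pivotal_modules} (together with Theorem~\ref{thm:pivmodule-pivMorita}, Theorem~\ref{strong_Morita} and Definition~\ref{def:pivMorita}) a pivotal Morita equivalence between the pivotal finite tensor categories $\cala$ and $\calb$ is the same as a strong \emph{pivotal} Morita context, i.e.\ a pivotal bicategory $\Mor$ with two objects $\star_\cala,\star_\calb$ such that $\Mor(\star_\cala,\star_\cala)\simeq\cala$ and $\Mor(\star_\calb,\star_\calb)\simeq\calb$ as pivotal tensor categories, the off-diagonal hom-categories being mutually inverse invertible bimodule categories; equivalently, there is a pivotal exact $\cala$-module $\calm$ with an equivalence $\calb\simeq\dualcat$ of pivotal tensor categories.

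Next I would record the non-pivotal input. Let $\calz(\Mor)$ be the category of pseudonatural transformations of $\Id_{\Mor}$ with modifications as morphisms; it is a braided tensor category, with tensor product the vertical composition of pseudonatural transformations, unit the identity modification of $\Id_{\Id_{\Mor}}$, and braiding built from the interchange $2$-cells of $\Mor$ by the Eckmann--Hilton argument, just as for the Drinfeld center of a one-object bicategory. Evaluation at $\star_\cala$ sends a pseudonatural transformation $\eta$ to the pair $(\eta_{\star_\cala},\gamma)\iN\calz(\cala)$, where $\gamma$ is the half-braiding determined by the $2$-cells of $\eta$ at the $1$-morphisms $\star_\cala\,{\to}\,\star_\cala$; since these $1$-morphisms are invertible (Theorem~\ref{Morita_context_exact_module}), $\eta$ is determined up to unique compatible isomorphism by this component and every half-braiding on an object of $\cala$ extends, so evaluation at $\star_\cala$ is a braided equivalence $\calz(\Mor)\xrightarrow{~\simeq~}\calz(\cala)$, and symmetrically at $\star_\calb$. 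This reproduces the known Morita invariance of the Drinfeld center as a braided tensor category (see \cite{EGno}, and \cite{Mue} for the $*$-setting); one may equally realise the intermediate object as the braided tensor category of $\cala$-$\calb$-bimodule endofunctors of $\calm$.

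The genuinely new part is to put a pivotal structure on $\calz(\Mor)$ from the pivotal structure of $\Mor$ and to show the two evaluation equivalences carry it onto the canonical pivotal structures of $\calz(\cala)$ and $\calz(\calb)$. By Proposition~\ref{dual_properties} left and right duals of $1$-morphisms of $\Mor$ are mutually inverse and organise into a duality pseudo-functor; hence the left and right duals in $\calz(\Mor)$ of a pseudonatural transformation $\eta$ are again pseudonatural transformations whose components are the duals computed hom-categorywise (Definition~\ref{def:duals}, Proposition~\ref{double_duals}). The pivotal structure of $\Mor$ supplies invertible $2$-cells trivialising the fourth power of the duals, coherent in the $1$-morphism and compatible with the duality pseudo-functor; by the pivotality axioms these $2$-cells are natural against the pseudonaturality constraints of $\eta$, so they assemble into a modification $\eta\xRightarrow{~\cong~}\eta\dd$, and the same axioms force this family to be monoidal for vertical composition and compatible with the braiding, i.e.\ a pivotal structure on $\calz(\Mor)$. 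Evaluating at $\star_\cala$ returns precisely the pivotal structure that the pivotal tensor category $\cala\simeq\Mor(\star_\cala,\star_\cala)$ induces on $\calz(\cala)$, and similarly at $\star_\calb$; composing the two evaluation equivalences then yields the desired equivalence of pivotal braided tensor categories.

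I expect the main obstacle to lie in the coherence bookkeeping of this last step, which splits into three parts: (i) that the pivotal $2$-cells of $\Mor$ -- a priori only trivialising double duals of $1$-morphisms -- glue into a well-defined modification on each object of $\calz(\Mor)$ (naturality against the pseudonaturality constraints); (ii) that the resulting family is monoidal for vertical composition of pseudonatural transformations, which is where the compatibility of the pivotal structure of $\Mor$ with horizontal composition enters; and (iii) that evaluation at $\star_\cala$ recovers the pivotal structure canonically attached to $\calz(\cala)$ by the pivotal category $\cala$, and symmetrically for $\calb$. Part (iii) is the crux, because this is exactly where one needs the Morita context to be \emph{pivotal}: only then are the pivotal structures of $\cala$ and $\calb$ two restrictions of one and the same structure on $\Mor$, which forces the structure transported along the Morita equivalence to agree with the intrinsic one on the other side. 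A convenient way to organise (i)--(iii) is to route the argument through the relative Serre pseudo-functor and its square trivialisation of Theorem~\ref{Thm_Radford_pseudo}, so that the coherences required reduce to those already established for the Radford data on $\Mor$.
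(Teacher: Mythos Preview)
Your approach is a genuinely different route from the paper's. The paper does not pass through the center of the Morita bicategory at all: it simply takes the explicit Schauenburg equivalence
\[
\Sigma\colon \calz(\cala)\xrightarrow{~\simeq~}\calz(\,\overline{\dualcat}\,),\qquad (a,\sigma)\longmapsto (a\act-,\gamma)
\]
(already known to be braided), and checks by a short direct computation that $\Sigma$ is pivotal whenever $\calm$ is a pivotal $\cala$-module (Proposition~\ref{schauenburg_pivotal}, via Lemma~\ref{lemma1}). The key point is that the induced pivotal structure on $\calz(\overline{\dualcat})$ evaluated at $\Sigma(a,\sigma)$ is the composite \eqref{pivotal_Sigma}, and the defining condition \eqref{condition_pivotal_module} on the module pivotal structure $\widetilde{p}$ says exactly that this composite equals $\Sigma(p_a)$. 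The theorem then follows in one line from Definition~\ref{PME}. By contrast, you build $\calz(\Mor)$, endow it with a pivotal structure from the pivotal structure of $\Mor$, and then evaluate at both objects. Your strategy is conceptually clean and would generalise to other pivotal bicategories, but it requires setting up and verifying the three coherence packages (i)--(iii) you flag, whereas the paper's argument is a two-diagram check against an already-available explicit functor.

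One caution on your final paragraph: routing the coherence through Theorem~\ref{Thm_Radford_pseudo} is not the right tool. The Radford pseudo-equivalence trivialises the \emph{square} of the Serre pseudo-functor, i.e.\ the \emph{quadruple} dual, and it exists for any exact module category regardless of pivotality; it therefore cannot by itself produce the trivialisation of the \emph{double} dual that you need on $\calz(\Mor)$. What actually does the work in step~(iii) is precisely the pivotal structure $\textbf{P}$ of $\Mor$ constructed in Theorem~\ref{thm:pivmodule-pivMorita}, and the verification that its restriction to the endomorphism tensor categories $\cala$ and $\overline{\dualcat}$ recovers their given pivotal structures --- which is exactly the content the paper isolates as condition \eqref{condition_pivotal_module} and checks directly.
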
 

\noindent
Section \ref{sec:spherical} is devoted to the property of a pivotal structure being 
spherical. Lately the perspective on sphericality has undergone a change: while 
trace-sphericality emphasized the equality of left and right traces for 
endomorphisms in a pivotal category, a more recent notion of sphericality \cite{DSPS}
requires the pivotal structure to square to Radford's trivialization of the 
quadruple dual. The two notions agree in the semisimple case. {\Color We prove 
that (beyond semisimplicity) the latter notion of sphericality is invariant 
under pivotal Morita equivalence:

\begin{cors}
Let $\cala$ and $\calb$ be two pivotal tensor categories that are pivotal 
Morita equivalent. $\cala$ is $($unimodular$)$ spherical if and only 
if $\calb$ is $($unimodular$)$ spherical.
\end{cors}

\noindent 
In the same spirit as for tensor categories \cite{DSPS}, we propose in Definition 
\ref{spherical_mod} a notion of sphericality for a pivotal module category 
${}_\cala\calm$ by means of the Radford isomorphism from Theorem \ref{thm:Radford}. 
However, for module categories there is a subtlety: this definition is relative to 
the choice of trivializations of the distinguished invertible objects of $\cala$ 
and $\dualcat$.
Fixing such trivializations, all monoidal and module categories in the Morita 
context associated to a spherical module category are spherical, which justifies 
our definition. Under the additional assumption of semisimplicity, there are 
distinguished choices of such trivializations (see Remark \ref{semi_dist_choices}).
}

In the final Section \ref{sec:equivariant} we extend some of the main
results of the previous sections to the equivariant case.


\section{Background}\label{sec:background}

In this section we fix notation and conventions and revisit some pertinent concepts and structures.
Throughout the text, all categories we consider are supposed to be linear 
abelian over an algebraically closed field \ko. 
A linear abelian category where every object is of finite length and all 
morphism spaces are \findim\ is said to be \emph{locally finite}. A \emph{finite} 
\ko-linear abelian category is a linear abelian category that is equivalent to 
the category of \findim\ modules over a \findim\ \ko-algebra.


\subsection{Tensor categories and module categories}

We first recall a few standard definitions in the theory of tensor categories 
(see \cite{EGno}). A \emph{multi-tensor category} is a locally finite rigid 
monoidal category $\cala$ whose tensor product functor $\otimes$ is bilinear. 
$\cala$ is said to be a \emph{tensor category} iff in addition its monoidal 
unit $\un$ is a simple object. We take without loss of generality a monoidal 
category to be strict to simplify the exposition. The \emph{monoidal opposite} 
$\overline{\cala\,}$ of a monoidal category $\cala$ is the monoidal category 
with the same underlying category as $\cala$, but with reversed tensor product, 
i.e.\ $a \,{\otimes_ {\overline{\cala\,}}}\,b \eq b \ot a$ and with the 
respectively adjusted associators. When convenient we denote the object in 
$\overline{\cala\,}$ that corresponds to an object $a \iN \cala$ by $\overline{a}$.

Concerning dualities on a monoidal category $\cala$ our conventions are as 
follows. A right dual $a^\vee$ of an object $a\iN\cala$ comes equipped with 
evaluation and coevaluation morphisms
  \be
  {\rm ev}_a\Colon a^\vee \Ot\, a\to \un \qquad{\rm and}\qquad
  {\rm coev}_a\Colon \un\to a\ot a^\vee .
  \ee
Similarly, a left dual $\Vee a$ and of $a\iN \cala$ comes with evaluation and 
coevaluation morphism
  \be
  \widetilde{{\rm ev}_a}\Colon a \,\Ot \Vee a\to \un \qquad{\rm and}\qquad
  \widetilde{{\rm coev}_a}\Colon \un\to \Vee a\ot a \,.
  \ee
In every finite tensor category $\cala$ there is a \emph{distinguished invertible 
object} $\DDA$, which comes with a monoidal natural isomorphism
  \be\label{Radford}
  r_\Cala^{}\Colon \DD_\Cala^{}\otimes-\otimes\DD_\cala^{-1}
  \xRightarrow{~\cong~} (-)^{\vee\vee\vee\vee}
  \ee
known as the \emph{Radford isomorphism}. A finite tensor category $\cala$ is said
to be \emph{unimodular} iff $\DDA$ is isomorphic to the monoidal unit $\un$.

 \medskip

A \emph{$($left$)$ module category} over a tensor category $\cala$, or 
\emph{$($left$)$ $\cala$-module}, for short, is a category $\calm$ together with an 
exact \emph{module action} functor 
  \be
  \Act \Colon \cala\Times\calm\to\calm 
  \ee
and a mixed associator obeying a pentagon axiom. In order to indicate the
tensor category over which $\calm$ is a module, we also denote it by
${}_\cala\calm$. Invoking an analogue for module categories of Mac Lane's 
strictification theorem (see \Cite{Rem.\,7.2.4}{EGno}), we assume strictness also
for module categories.  In the case that $\cala$ is finite, we require that 
$\calm$ is finite as a linear category as well. 

A \emph{right} $\calb$-module $\caln$ is defined as a left 
$\overline{\calb}$-module; its module action functor is denoted by 
  \be
  \actr \Colon \caln\Times\calb\to\caln ,
  \ee
and we also denote it by $\caln_\calb$. Similarly, for finite tensor categories 
$\cala$ and $\calb$, an $(\cala,\calb)$-\emph{bimodule category} is defined as a 
(left) module category over the Deligne product $\cala\boti\overline{\calb}$.

An $\cala$-module category $\calm$ is called \emph{exact} iff $p\act m$ is 
projective in $\calm$ for any projective object $p\iN\cala$ and any object $m\iN\calm$.
A module category which is not equivalent to a direct sum of two non-trivial module 
categories is said to be \emph{indecomposable}.

{\Color A \emph{module functor} between $\cala$-module categories $\calm$ and $\caln$ is a functor $H\colon \calm\Rarr{}\caln$ together with a \textit{module constraint}, i.e.\ a collection of natural isomorphisms $H(a\act m)\xcong a\act H(m)$ for $a\iN\cala$ and $m\iN\calm$ obeying appropriate pentagon axioms. A \emph{module natural transformation} between module functors is a natural transformation between the underlying functors that commutes with the respective module structures.
We denote by $\Fun_\cala(\calm,\caln)$ the category which has module functors between two $\cala$-modules $\calm$ and $\caln$ as objects and module natural transformations as morphisms. We denote by $\Nat_{\rm mod}(H_1,H_2)$ the space of module natural transformations between given module functors $H_1$ and $H_2$. Similarly, $\Funre_\cala(\calm,\caln)$ and $\Funle_\cala(\calm,\caln)$ denote the categories of right exact and left exact module 
functors, respectively.  In the case that $\calm$ is exact, every $H\iN \Fun_\cala(\calm,\caln)$ is an exact module functor.
}
\begin{lem}{\rm \Cite{Cor.\,2.13}{DSPS2}}
Let $H\colon \calm\Rarr{}\caln$ be an $\cala$-module functor. If its underlying 
functor admits a right $($left$)$ adjoint functor, then $H$ admits
a right $($left$)$ adjoint $\cala$-module functor such that the unit and counit of the adjunction are module natural transformations.
\end{lem}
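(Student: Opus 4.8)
The plan is the standard doctrinal-adjunction argument: transport the adjunction from underlying functors to module functors by forming mates, the one genuinely non-formal ingredient being the rigidity of $\cala$. I treat the case of a right adjoint; the left adjoint case is dual, obtained by applying the same reasoning to the adjunction $F\dashv H$ with $F$ the given left adjoint of the underlying functor.

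First I would fix a right adjoint $G\colon\caln\to\calm$ of the underlying functor of $H$, with unit $\eta\colon\Id_\calm\Rightarrow G\cir H$ and counit $\eps\colon H\cir G\Rightarrow\Id_\caln$, and write the module constraint of $H$ as a natural isomorphism $\gamma^H_{a,m}\colon H(a\act m)\xcong a\act H(m)$, which for each $a\iN\cala$ is an isomorphism of functors $\gamma^H_a\colon H\cir(a\act-)\Rightarrow(a\act-)\cir H$. I would then equip $G$ with the candidate constraint obtained as the mate of $\gamma^H$ along $H\dashv G$, namely, for $a\iN\cala$ and $n\iN\caln$, the composite
\[
 a\act G(n)\;\xrightarrow{\,\eta\,}\;GH\bigl(a\act G(n)\bigr)\;\xrightarrow{\,G(\gamma^H)\,}\;G\bigl(a\act HG(n)\bigr)\;\xrightarrow{\,G(\id_a\act\eps)\,}\;G(a\act n),
\]
viewed as a natural transformation $\bar\gamma^G_a\colon(a\act-)\cir G\Rightarrow G\cir(a\act-)$. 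A priori this only exhibits $G$ as an \emph{oplax} module functor; the genuine module constraint will be $\gamma^G:=(\bar\gamma^G)^{-1}$ once invertibility is established.

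The main obstacle is precisely to show that $\bar\gamma^G_{a,n}$ is invertible: for a general $2$-monad the right adjoint of a strong morphism is only lax, so the rigidity of $\cala$ must be used. It enters through the fact that the action functor $a\act-$ on any $\cala$-module admits a right adjoint, namely the action $\Vee a\act-$ by the left dual, with unit and counit built from the left-duality morphisms of $\cala$. Consequently both composites $H\cir(a\act-)$ and $(a\act-)\cir H$ from the previous paragraph have right adjoints, $(\Vee a\act-)\cir G$ and $G\cir(\Vee a\act-)$, and the isomorphism $\gamma^H_a$ between them induces an isomorphism of these right adjoints; tracing through the triangle identities identifies this isomorphism, up to the canonical identifications, with $\bar\gamma^G_{\Vee a}$. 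Since every object of $\cala$ is isomorphic to a left dual — namely $b\cong\Vee(b^\vee)$ — this shows $\bar\gamma^G_a$ is invertible for every $a\iN\cala$, so that $\gamma^G_{a,n}\colon G(a\act n)\xcong a\act G(n)$ is well defined.

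It remains to record the coherence: that $(G,\gamma^G)$ satisfies the coherence axiom for module functors, and that $\eta$ and $\eps$ are module natural transformations with respect to $\gamma^H$ and $\gamma^G$. Both are formal consequences of the module-functor axiom for $(H,\gamma^H)$ and the triangle identities for $H\dashv G$, by the usual mate calculus: the defining composite of $\bar\gamma^G$ is arranged precisely so that the compatibility squares expressing that $\eta$ and $\eps$ are module transformations commute automatically — this is the universal property of the mate correspondence, i.e.\ doctrinal adjunction — and the coherence axiom for $\gamma^G$ follows from that for $\gamma^H$ by pasting in the triangle identities. I would not carry out these diagram chases in detail, as they are routine.
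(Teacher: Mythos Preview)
The paper does not supply its own proof of this lemma; it is stated with a citation to \Cite{Cor.\,2.13}{DSPS2} and no argument is given. Your proposal is therefore not competing with anything in the paper itself.

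That said, your argument is correct and is essentially the doctrinal-adjunction proof that underlies the cited reference: form the mate of $\gamma^H$ along $H\dashv G$ to get a lax (your ``oplax'') constraint on $G$, then use rigidity of $\cala$ --- i.e.\ the fact that each $a\act-$ is itself part of an adjoint pair --- to upgrade it to an isomorphism, with the coherence conditions and module-naturality of $\eta,\eps$ following formally from the mate calculus. The one place where your sketch is terse is the identification of the conjugate of $\gamma^H_a$ under passage to right adjoints with $\bar\gamma^G_{\Vee a}$; this does require unwinding the units and counits of the composite adjunctions $H\cir(a\act-)\dashv(\Vee a\act-)\cir G$ and $(a\act-)\cir H\dashv G\cir(\Vee a\act-)$, but it is a routine (if somewhat lengthy) verification, and your acknowledgement ``up to the canonical identifications'' is honest about this.
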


The \emph{dual tensor category} $\dualcat$ of a tensor 
category $\cala$ with respect to an $\cala$-module $\calm$ is the category of right 
exact module endofunctors, $\dualcat \eq \Funre_\cala(\calm,\calm)$, with tensor product 
given by composition of functors. If $\calm$ is exact $\dualcat$ is rigid and in case $\calm$ indecomposable, then the identity functor $\id_\calm$ is simple, making $\dualcat$ a tensor category. The evaluation of a functor on an object turns $\calm$ into a $\dualcat$-module category.

The category of module functors has the structure of a bimodule category. More
specifically, for given bimodule categories ${}_\cala\calm_\calb$, 
${}_\cala\caln_\calc$ and ${}_\cald\call_\calb$, 
$\Fun_\cala(\calm,\caln)$ becomes a $(\calb,\calc)$-bimodule category via the actions
  \be
  \label{left_module_functors}
  b\actF H:= H\circ (-\actr b) \qquad{\rm and}\qquad H\actrF c:=(-\actr c)\circ H 
  \ee
for $H \iN \Fun_\cala(\calm,\caln)$, $b\iN\calb$ and $c\iN\calc$,
while $\Fun_\calb(\calm,\call)$ inherits a $(\cald,\cala)$-bimodule category structure
given by
  \be
  \label{right_module_functors}
  d\actF H:= (d\act -)\circ H \qquad{\rm and}\qquad H\actrF a:=H\circ (a\act -) \,.
  \ee    
Analogously, the categories of right exact and left exact module functors are endowed with the structure of a bimodule category as well.

The opposite category $\calm\opp$ of the linear category $\calm$ that underlies a bimodule ${}_\cala\calm_\calb$ can be endowed in many different
ways with the structure of a $(\calb,\cala)$-bimodule category, by twisting the 
actions with odd powers of duals.

\begin{defi}
Let $\calm$ be an $(\cala,\calb)$-bimodule category over tensor categories $\cala$
and $\calb$.
We define $\HH\calm$ as the $(\calb,\cala)$-bimodule category with underlying 
category $\calm\opp$ and actions given by 
  \be
  b\act\overline{m}\actr a := \overline{\Vee a\act m \actr {}^\vee b}
  \ee
for $a\iN\cala$, $b\iN\calb$ and $\overline{m}\iN\calm\opp$. Similarly, $\calm^\#$ is
defined to be the $(\calb,\cala)$-bimodule with actions twisted by right duals, i.e.
  \be
  b\act\overline{m}\actr a:=\overline{a^\vee\act m \actr b^\vee} 
  \ee
for $a\iN\cala$, $b\iN\calb$ and $\overline{m}\iN\calm\opp$.
\end{defi}


\subsection{Relative Deligne product}

Let $\calm$ be a right module and $\caln$ a left module over a finite tensor
category $\calb$.

\begin{enumerate}[(i)]
    \item 
A $\calb$-\emph{balancing} on a bilinear functor $F \colon \calm\Times\caln\To\call$ 
into a linear category $\call$ is a natural family of isomorphisms 
  \be
  F(m\actr b,n) \Xcong F(m,b\act n)
  \ee
for $b\iN\calb$, $m\iN\calm$ and $n\iN\caln$,
obeying an obvious pentagon coherence condition (for details, see for instance 
\Cite{Def.\,2.12}{schaum2}). A bilinear functor endowed with a balancing is called
{\Color a} \emph{balanced functor}.

    \item 
A \emph{balanced natural transformation} between balanced functors is a 
natural transformation between the underlying functors that commutes with the respective balancings.

    \item 
Balanced functors $F \colon \calm\Times\caln\To\call$ together with balanced 
natural transformations form a category, denoted by ${\rm Bal}(\calm\times\caln,\call)$.
Its full subcategory of right exact balanced functors is denoted by
${\rm Bal}\re(\calm\times\caln,\call)$.
		
    \item 
The \emph{relative Deligne product} of $\calm_\calb$ and ${}_\calb\caln$ is a linear category $\calm\boxtimes_\calb\caln$ equipped with a right exact 
$\calb$-balanced functor $\boxtimes_\calb\colon \calm\Times\caln \Rarr{}
\calm\boxtimes_\calb\caln$ such that for every linear category $\call$ the functor
  \be
  \label{relative_Deligne_up}

  \ee
for $a\iN\cala$, $m\iN\calm$, $H\iN\FunM$ and $F\iN\overline{\dualcat}$. Analogous
relations are valid for left duals. 
\end{prop}

\begin{proof}
All these isomorphisms follow immediately from the definitions and Theorem 
\ref{Morita_bicategory_dualities}. We provide nonetheless the proof for some of the
statements. For instance, the definition of duals directly implies (i):
  \be
  \Vee(m^\vee) = \iHom_\calm^\cala(m,-)\la(\un) = \un\act m \cong m \,.
  \ee
Similarly, (ii) follows with the help of Lemma \ref{adjoints_ihom}:
  \be
  (\Vee H)^\vee = \iHom_\calm^\cala(H\la(\un),-) \cong \iHom_\cala^\cala(\un,H(-))
  = H(-)\ot\un^\vee \cong H \,.
  \ee
The isomorphism in (iii) corresponds to the module functor structure on the 
internal Hom given by (\ref{ihom_rmod}). Finally, (viii) comes from the composition
  \be
  \begin{aligned}
  (\mixtd{H}{m})^\vee &= [(-\act m)\circ H]\ra = H\ra\circ\iHom_\calm^\cala(m,-)
  \Nxl1
  & \cong H\ra\left(\iHom_\calm^\cala(m,-)\ot\un\right)
  \Nxl1
  &\cong \iHom_\calm^\cala(m,-)\act H\ra(\un)=\mixtd{m^\vee}{H^\vee} ,
  \end{aligned} 
  \ee
where we first identify the adjoint of a composite with the composition of the 
adjoints in reversed order, and where the last isomorphism corresponds to the 
module functor structure of $H\ra$.
\end{proof}

Dualities in a tensor category provide an equivalence between its opposite category 
and its monoidal opposite. Analogously, the duals we just defined for a module 
category exhibit how the bimodule category $\FunM$ plays the role of an opposite 
to $\calm$.

\begin{prop}
Let $\calm$ be an exact $\cala$-module category. The dualities on the bicategory $\Mor$ 
induce equivalences
  \be
  (-)^\vee\Colon\calm\xsimeq\FunM^\# \qquad\text{and}\qquad \Vee(-)\Colon\calm\xsimeq\H\FunM
  \ee
of $(\cala,\overline{\dualcat})$-bimodule categories, and equivalences
  \be
  (-)^\vee\Colon\FunM\xsimeq\calm^\# \qquad\text{and}\qquad \Vee(-)\Colon\FunM\xsimeq\HH\calm
  \ee
of $(\overline{\dualcat},\cala)$-bimodule categories.
\end{prop}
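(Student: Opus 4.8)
The plan is to read off all four equivalences from Proposition~\ref{dual_properties}: the statement is essentially a repackaging of the isomorphisms collected there, the only genuine task being to match them with the twists built into the opposite bimodule categories $\calm^\#$, $\HH\calm$, $\FunM^\#$ and $\H\FunM$.

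I would treat $(-)^\vee\colon\calm\xsimeq\FunM^\#$ in detail; the remaining three are formally identical. First comes a remark on variance. Passing to the right dual of a $1$-morphism in $\Mor$ is contravariant on $2$-morphisms, so the restriction of the right-dualization assignment to the hom-category $\Mor(\mathbf{0},\mathbf{1})=\calm$ is a functor $\calm\to\FunM\opp$; since $\FunM\opp$ is, by definition, the linear category underlying $\FunM^\#$, this is already a well-defined \emph{covariant} functor $(-)^\vee\colon\calm\to\FunM^\#$, and likewise in the other three cases. It is an equivalence of underlying linear categories: by Proposition~\ref{dual_properties}(i) and (ii) the functors $(-)^\vee$ and $\Vee(-)$ relating $\calm$ and $\FunM$ are mutually quasi-inverse, which is precisely the assertion that left and right duals are inverse to one another. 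Since a quasi-inverse of a bimodule functor that is an equivalence is again a bimodule functor, it then remains only to exhibit, in each case, a bimodule functor structure.

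Next I would supply that structure for $(-)^\vee\colon\calm\to\FunM^\#$. Unwinding the definition of the twisted opposite $(-)^\#$ for the $(\overline{\dualcat},\cala)$-bimodule $\FunM$, the left $\cala$-action and the right $\overline{\dualcat}$-action on $\FunM^\#$ are obtained from the actions on $\FunM$ by twisting with right duals. Proposition~\ref{dual_properties}(iii) and (iv), which compute $(a\act m)^\vee$ and $(m\actr F)^\vee$, then provide exactly the isomorphisms witnessing that $(-)^\vee$ intertwines the left $\cala$- and the right $\overline{\dualcat}$-action on $\calm$ with these twisted actions on $\FunM^\#$. That these two structure isomorphisms satisfy the coherence (pentagon/hexagon) axioms of a bimodule functor is inherited from the compatibility of the right-dualization pseudo-functor with horizontal composition (cf.\ Proposition~\ref{dual_properties}), together with the coherence theorem for the bicategory $\Mor$ (Theorem~\ref{Morita_bicategory} and the remark following it): the module constraints of $\calm$ and $\FunM$ and the associators of $\cala$ and $\overline{\dualcat}$ are among the associativity data of $\Mor$, and the pseudo-functor pentagon specializes precisely to the diagrams one has to check.

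Finally I would observe that $\Vee(-)\colon\calm\xsimeq\H\FunM$ follows in the same way from the left-dual analogues of Proposition~\ref{dual_properties}(iii),(iv); that $(-)^\vee\colon\FunM\xsimeq\calm^\#$ uses parts (v) and (vi); and that $\Vee(-)\colon\FunM\xsimeq\HH\calm$ uses their left-dual analogues. The single step that really demands care — and essentially the whole content of the proof — is the bookkeeping: one must verify that twisting by right duals (as in $\calm^\#$ and $\FunM^\#$) goes together with the right-dual functor $(-)^\vee$, while twisting by left duals (as in $\HH\calm$ and $\H\FunM$) goes with $\Vee(-)$, and check this against the precise form of the isomorphisms in Proposition~\ref{dual_properties}. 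Once the conventions are aligned there is nothing further to compute.
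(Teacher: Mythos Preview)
Your proposal is correct and follows essentially the same approach as the paper: the paper's proof likewise invokes Proposition~\ref{dual_properties}(i)--(ii) to conclude that left and right duals are mutual quasi-inverses, and Proposition~\ref{dual_properties}(iii)--(iv) to supply the bimodule structure, with the remaining cases handled by analogous reasoning. Your write-up is more detailed about variance, coherence, and the twist bookkeeping, but the underlying argument is the same.
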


\begin{proof}
Proposition \ref{dual_properties} shows that right and left duals are mutual 
quasi-inverses, and the isomorphisms from Proposition \ref{dual_properties}(iii) and 
\ref{dual_properties}(iv) endow the duality functor with a bimodule structure. 
The statements for the remaining equivalences follow by analogous reasoning.
\end{proof}



There are further relations involving the dualities which constitute a duality 
calculus in the Morita context, a powerful tool for performing calculations
such as the computation of relative Serre functors.

\begin{rem}\label{Homs_adj}~{}
\begin{enumerate}[$($i$)$]
    \item 
Recall that for the regular module ${}_\cala\cala$ we have
  \be
  \iHom_\cala^\cala(a,b) = b\ot a^\vee \qquad\text{and}\qquad
  \icoHom_\cala^\cala(a,b) = b\ot \Vee a \,.
  \ee
for $a,b\in\cala$. Similarly it follows from the definition of the mixed products and of the dualities that
  \be
  \iHom_\calm^\cala(m,n) =\mixt{n}{m^\vee} \qquad\text{and}\qquad
  \icoHom_\calm^\cala(m,n)=\mixt{n}{\Vee m} \,.
  \ee
for $m,n\in\calm$.  
    \item 
Acting with dualities on the bimodules in the Morita context leads to the adjunctions
  \be
  (a^\vee\act-)\dashv (a\act-) \dashv (\Vee a\act -) \qquad\text{and}\qquad
  (-\actr F\la)\dashv (-\actr F) \dashv (-\actr F\ra)
  \ee
as well as
  \be
  (F\ra\actF-)\dashv (F\actF -) \dashv (F\la\actF-) \qquad\text{and}\qquad
  (-\actrF \Vee a)\dashv (-\actrF a)\dashv (-\actrF a^\vee)
  \ee
for $a\iN\cala$ and $F\iN\dualcat$.
\end{enumerate}
\end{rem}

It turns out that the relations given in Remark \ref{Homs_adj} can be extended to 
the entire Morita context:

\begin{prop}\label{Morita_adjunctions}
There are natural isomorphisms
  \be
  \begin{array}{rl}
  {\rm (i)}   & \HomM(a\act m, n)\cong \HomA(a,\mixt{n}{m^\vee}) \,,
  \Nxl6
  {\rm (ii)}  & \HomM(n, a\act m)\cong \HomA(\mixt{n}{\Vee m}, a) \,,
  \Nxl6
  {\rm (iii)} & \HomM(m\actr F,n)\cong \HomDual(F,\mixtd{\Vee m}{n}) \,,
  \Nxl6
  {\rm (iv)}  & \HomM(n, m\actr F)\cong \HomDual(\mixtd{m^\vee}{n},F) \,,
  \Nxl6
  {\rm (v)}   & \HomF(F\actF H_1,H_2)\cong \HomDual(F,\mixtd{H_2}{H_1^\vee}) \,,
  \Nxl6
  {\rm (vi)}  & \HomF(H_2,F\actF H_1)\cong \HomDual(\mixtd{H_2}{\Vee H_1},F) \,,
  \Nxl6
  {\rm (vii)} & \HomF(H_1\actrF a,H_2)\cong \HomA(a,\mixt{\Vee H_1}{H_2}) \,,
  \Nxl6
  {\rm (viii)}& \HomF(H_2,H_1\actrF a)\cong \HomA(\mixt{H_1^\vee}{H_2},a) 
  \end{array}
  \ee
for $a\iN\cala$, $m,n\iN\calm$, $H_1,H_2\iN\FunM$ and $F\iN\overline{\dualcat}$.
\end{prop}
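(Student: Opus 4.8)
The plan is to derive all eight isomorphisms from the adjunctions already recorded in Definition \ref{def:duals} and in Remark \ref{Homs_adj}, together with the identification of internal Homs and coHoms in terms of mixed products from Remark \ref{Homs_adj}(i). The underlying principle is the one-sentence slogan: in the bicategory $\Mor$, a morphism space between two $1$-morphisms can be ``rotated'' by moving a $1$-morphism from one side of a composite to the other as its dual, exactly as one does in an ordinary rigid monoidal category. Since Theorem \ref{Morita_bicategory_dualities} tells us $\Mor$ is a bicategory with dualities, each of the eight statements is an instance of the standard hom-tensor adjunction $\mathscr{F}(y\cir f, g)\cong \mathscr{F}(y, g\cir f^\vee)$ (and its three variants obtained by using left duals or adjoining on the other side), applied to the appropriate $1$-morphism categories $\Mor(\mathbf{i},\mathbf{j})\iN\{\cala,\overline{\dualcat},\calm,\FunM\}$.

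Concretely, I would proceed as follows. First, for (i) and (ii): apply the adjunctions $(a^\vee\act-)\dashv(a\act-)\dashv(\Vee a\act-)$ from Remark \ref{Homs_adj}(ii) together with the defining adjunction $(-\act m)\dashv\iHomM^\cala(m,-)$ of the internal Hom, and then rewrite $\iHomM^\cala(m,n)=\mixt{n}{m^\vee}$ and $\icoHom_\calm^\cala(m,n)=\mixt{n}{\Vee m}$ using Remark \ref{Homs_adj}(i); for instance $\HomM(a\act m,n)\cong\HomA(a,\iHomM^\cala(m,n))=\HomA(a,\mixt{n}{m^\vee})$, which is (i), and dually (ii) comes from the internal coHom adjunction. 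Next, for (iii) and (iv): use the module-action adjunction on the $\overline{\dualcat}$-side, i.e.\ $\HomM(m\actr F,n)\cong\HomDual(F,?)$, where the object $?$ is characterized as the image of $n$ under the right adjoint of $(m\actr-)$; identifying that right adjoint and writing it via $\mixtd{}{}$ gives $\mixtd{\Vee m}{n}$, and the left-adjoint version gives $\mixtd{m^\vee}{n}$ in (iv). Then, for (v)--(viii): repeat the same pattern inside $\FunM$, using that $(-\actrF a)$ has adjoints $(-\actrF\Vee a)$ and $(-\actrF a^\vee)$, and that $(F\actF-)$ has adjoints $(F\ra\actF-)$ and $(F\la\actF-)$ (Remark \ref{Homs_adj}(ii)), combined with the description of $\mixt{}{}$ and $\mixtd{}{}$ as evaluation and post-composition functors from Definition \ref{def:duals}. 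In each case the key auxiliary fact is Lemma \ref{adjoints_ihom} (for the $\cala$-valued cases) or Lemma \ref{adjoints_icohom}, which let one move adjoints through internal Homs/coHoms.

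Alternatively — and more uniformly — one can observe that every one of the eight lines is literally the hom-adjunction for a dual pair of $1$-morphisms in $\Mor$: e.g.\ in (v), $F\actF H_1$ is the horizontal composite $H_1\cir F$ in $\Mor(\mathbf 0,\mathbf 0)\To\FunM$ precomposed with $F\iN\overline{\dualcat}=\Mor(\mathbf 0,\mathbf 0)$, so $\HomF(F\actF H_1,H_2)\cong\HomF(H_1\cir F,H_2)$, and since $F$ has right dual $F^\vee$ this equals the morphisms from $H_1$ to $H_2\cir F^\vee=\mixtd{H_2}{H_1^\vee}$ — where one uses Proposition \ref{dual_properties}(v),(viii) to recognize the composite. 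Thus the cleanest writeup is: invoke that $\Mor$ is a bicategory with dualities (Theorem \ref{Morita_bicategory_dualities}), so the snake identities hold for all the dual pairs of Definition \ref{def:duals}; apply the resulting hom-adjunctions; and translate the composites back into mixed-product notation using Remark \ref{Homs_adj}(i) and Proposition \ref{dual_properties}.

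I expect the only real obstacle to be bookkeeping: tracking which dual (left versus right) and which side (pre- versus post-composition) is in play for each of the eight items, and making sure the twisted bimodule structures are not needed — they are not, since these are mere isomorphisms of hom-\emph{spaces}, not of bimodule functors. So the proof can honestly be written as ``these all follow by the same argument, which we spell out in two representative cases,'' doing (i) and one of (v)--(viii) in detail and leaving the rest to the reader, exactly in the style of the proof of Proposition \ref{dual_properties}.
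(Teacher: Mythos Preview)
Your overall strategy is correct and essentially coincides with the paper's: these eight isomorphisms are precisely the hom-adjunctions coming from the dual pairs of $1$-morphisms in the bicategory $\Mor$ (Theorem~\ref{Morita_bicategory_dualities}), and the paper proves them by writing out exactly those adjunction bijections explicitly, using the units and counits supplied in Definition~\ref{def:duals}. For instance, the paper's proof of (v) is literally the map $\eta\mapsto (H_1^{\rm ra}\circ\eta)\circ\text{unit}$ and its inverse, i.e.\ the standard bijection $\Hom(H_1\circ F,H_2)\cong\Hom(F,H_1^{\rm ra}\circ H_2)$ coming from $H_1\dashv H_1^{\rm ra}$, followed by the identification $H_1^{\rm ra}\circ H_2\cong\mixtd{H_2}{H_1^\vee}$.

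That said, your worked example for (v) is garbled in a way that illustrates exactly the bookkeeping danger you flag. First, $\overline{\dualcat}=\Mor(\mathbf{1},\mathbf{1})$, not $\Mor(\mathbf{0},\mathbf{0})$. Second, to pass from $\HomF(F\actF H_1,H_2)$ to a Hom in $\overline{\dualcat}$ you must move $H_1$, not $F$: applying the duality adjunction for $F$ would keep you inside $\FunM$, whereas using the right dual $H_1^\vee\in\calm$ of $H_1\in\FunM$ gives $\Hom_{\overline{\dualcat}}(F,\,\mixtd{H_2}{H_1^\vee})$, which is what (v) asserts. Your equation ``$H_2\cir F^\vee=\mixtd{H_2}{H_1^\vee}$'' conflates $F$ and $H_1$ and does not typecheck. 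Similarly, in your first concrete plan for (iii) you propose to ``identify the right adjoint of $(m\actr-)$''; but that identification \emph{is} statement (iii), so this is circular unless you actually carry out the bicategory argument (move $m$ across using its left dual $\Vee m\in\FunM$) or construct the bijection by hand as the paper does. Once these slips are corrected, your write-up plan (two representative cases in detail, the rest by symmetry) is exactly what the paper does.
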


\begin{proof}
(i) and (ii) follow directly from the definition of the duals and the defining properties of
the internal Hom and coHom.
The bijection 
  \be
  \label{ad_iii}
  \HomM(F(m),n)\longrightarrow \text{Nat}_{\rm mod}(F,\icoHom(m,-)\act n)
  \ee
in (iii) is an arrow assigning to $f\colon F(m)\To n$ a module natural transformation 
whose component at $l\iN\calm$ is given by the composition
  \be
  F(l) \rarr{F(\overline{\rm coev})}F(\icoHom(m,l)\act m) \cong \icoHom(m,l)\act F(m)
  \rarr{\id\act f}\icoHom(m,l)\act n \,.
  \ee
The inverse of (\ref{ad_iii}) is given by the assignment
  \be
  \eta ~\xmapsto{\quad}~ F(m) \rarr{\eta_m}\icoHom(m,m)\act n\rarr{\overline{\rm ev}_m\act \id_n}n
  \ee
for $\eta \iN \text{Nat}_{\rm mod}(F,\icoHom(m,-)\act n)\eq\HomDual(F,\mixtd{\Vee m}{n})$.
The bijection in (iv) is defined in a similar fashion. To prove (v) consider 
the bijection

  \be\label{ad_v}
  \text{Nat}_{\rm mod}(H_1 \cir F,H_2) \rarr~ \text{Nat}_{\rm mod}(F, H_1\ra\cir H_2)
  \ee
which assigns to $\eta\colon H_1\cir F\,{\xRightarrow~}\, H_2$ the natural transformation
  \be
  F\xRightarrow{\quad} H_1\ra\cir H_1\cir F\xRightarrow{\;\id\circ\,\eta~} H_1\ra\cir H_2 \,.
  \ee
Given $\gamma \iN  \text{Nat}_{\rm mod}(F,H_1\ra \cir H_2)\eq\HomDual(F,\mixtd{H_2}{H_1^\vee})$ the assignment
  \be
  \gamma ~\xmapsto{\quad}~ H_1\cir F \xRightarrow{\;\id\circ\,\gamma~}
  H_1\cir H_1\ra\cir H_2 \xRightarrow{\quad} H_2
  \ee
serves as inverse of \eqref{ad_v}. The isomorphism (vi) is defined analogously. 
To obtain (vii), consider the function
  \be
  \begin{aligned}
  \text{Nat}_{\rm mod}\left((
  {-}\ot a)\cir H_1,H_2\right) & \longrightarrow \Hom_\cala\big(a,\,H_2^{}(H_1\la(\un))\big)
  \\
  \eta\, & \longmapsto a \rarr{} ({-}\ot a)\cir H_1^{}\cir H_1\la(\un)
  \xrightarrow{\,\eta_{H_1\la(\un)}\;} H_2^{}(H_1\la(\un)) \,,
  \end{aligned}
  \ee
	which has as inverse the arrow that assigns to $f\colon a\To H_2^{}(H_1\la(\un))$ the composition
  \be
  ({-}\ot a) \cir H_1\xRightarrow{\;(-\otimes f)\,\circ\,\id~}
  \big({-}\ot H_2^{}(H_1\la(\un))\big) \cir H_1\cong H_2^{}\cir H_1\la\cir H_1
  \xRightarrow~ H_2^{} \,,
  \ee
where the isomorphism is the module structure of $H_2\cir H_1\la$ and
the last arrow is the counit of the adjunction $H_1\la \,{\dashv}\, H_1$.
\end{proof}

\begin{rem}\label{iHoms_icoHoms_Morita}
Notice that the adjunctions in Proposition \ref{Morita_adjunctions} describe 
the internal Homs and coHoms of the bimodule categories in the Morita context 
in terms of the products and dualities:
  \be
  \begin{array}{rl}
  {\rm (i)}   &  \iHom_\calm^\cala(m,n) = \mixt{n}{m^\vee} ,
  \Nxl6
  {\rm (ii)}  &  \icoHom_\calm^\cala(m,n) = \mixt{n}{{}^{\vee\!} m} \,,
  \Nxl6
  {\rm (iii)} &  \iHom_\calm^\dualcat(m,n)
  = \mixtd{{}^{\vee\!} m}{n}=\icoHom_\calm^\cala(m,-)\act n \,, 
  \Nxl6
  {\rm (iv)}  &  \icoHom_\calm^\dualcat(m,n)
  = \mixtd{m^\vee}{n}=\iHom_\calm^\cala(m,-)\act n \,, 
  \Nxl6
  {\rm (v)}   &  \iHom_\FunM^{\overline{\dualcat}}(H_1,H_2)
  = \mixtd{H_2}{H_1^\vee}=H_1\ra\cir H_2 \,,
  \Nxl6
  {\rm (vi)}  &  \icoHom_\FunM^{\overline{\dualcat}}(H_1,H_2)
  = \mixtd{H_2}{{}^\vee H_1}=H_1\la\cir H_2 \,,
  \Nxl6
  {\rm (vii)} &  \iHom_\FunM^{\cala}(H_1,H_2)=\mixt{{}^\vee H_1}{H_2} = H_2\cir H_1\la(\un) \,,
  \Nxl6
  {\rm (viii)}&  \icoHom_\FunM^{\cala}(H_1,H_2)=\mixt{H_1^\vee}{H_2} = H_2\cir H_1\ra(\un) \,.
  \eear
  \label{BasId}
  \ee
In particular the formulas (iii) and (iv) relate 
the internal Homs and coHoms of the module categories ${}_\cala\calm$ and
${}_\dualcat\calm$.
\end{rem}

\begin{lem}\label{Eq_functor_cats}
Let $\calm$ be an exact module category over a finite tensor category $\cala$. Then 
the assignment
  \be
  \label{Functor1}
  \begin{aligned}
  \FunM & \longrightarrow \Fun_{\,\overline{\dualcat}\,}
  (\calm,\,\overline{\dualcat} ) \,,
  \\
  H & \longmapsto \mixtd{H}{-}
  \end{aligned}
  \ee
is an equivalence of $(\overline{\dualcat},\cala)$-bimodule categories.
\end{lem}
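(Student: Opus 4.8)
The plan is to exhibit the functor \eqref{Functor1} as a composite of equivalences already established in the paper, rather than constructing a quasi-inverse by hand. Recall from Theorem \ref{Morita_context_exact_module} that $\calm$ together with $\FunM$ forms a Morita context with mixed products $\mixto{}{}$ and $\mixtdo{}{}$; in particular the functor $\mixtd{}{}\colon \FunM\Times\calm\To\overline{\dualcat}$ is, by its very construction, the special case $(H,m)\mapsto (-\act m)\cir H$ of the equivalence \eqref{Deligne_left_eq} applied to the bimodule categories ${}_{\overline{\dualcat}}\FunM_\cala$ and ${}_\cala\calm_{\overline{\dualcat}}$. Thus $\mixtdo{}{}$ induces an adjoint equivalence
  \be
  \FunM\boxtimes_\cala\calm \xrightarrow{\;\simeq\;} \overline{\dualcat}
  \ee
of $(\overline{\dualcat},\overline{\dualcat})$-bimodule categories. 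On the other hand, applying Corollary \ref{relative_Deligne_module_functors} to the bimodules ${}_{\overline{\dualcat}}\FunM_\cala$ and ${}_\cala\calm_{\overline{\dualcat}}$ gives an equivalence $\FunM\boxtimes_\cala\calm \simeq \Funre_\cala(\FunM^\#,\calm)$ of $(\overline{\dualcat},\overline{\dualcat})$-bimodule categories, and more to the point one may instead use the universal property \eqref{relative_Deligne_up} directly.

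Concretely, here is the cleanest route. By the universal property of the relative Deligne product (in its bimodule form, as recorded in item (v) of the relative Deligne product subsection), right exact $\overline{\dualcat}$-bimodule functors out of $\FunM\boxtimes_\cala\calm$ correspond to right exact $\cala$-balanced $\overline{\dualcat}$-bimodule functors out of $\FunM\Times\calm$. Composing the inverse of the equivalence $\mixtd{}{}\colon \FunM\boxtimes_\cala\calm\xsimeq\overline{\dualcat}$ with the canonical balanced functor $\boxtimes_\cala\colon\FunM\Times\calm\To\FunM\boxtimes_\cala\calm$, one sees that for fixed $H\iN\FunM$ the assignment $m\mapsto \mixtd{H}{m}$ is precisely the $\overline{\dualcat}$-module functor $\calm\To\overline{\dualcat}$ obtained from $H\boxtimes_\cala -$; the $\cala$-module structure of this functor in the variable $H$ is exactly the right $\cala$-action on $\FunM$ from \eqref{left_module_functors}, while the left $\overline{\dualcat}$-module structure comes from the left $\overline{\dualcat}$-action on $\FunM$. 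This identifies \eqref{Functor1} with the $(\overline{\dualcat},\cala)$-bimodule functor induced by the equivalence $\mixtd{}{}$, hence it is itself an equivalence.

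The step I expect to be the main obstacle is the bookkeeping of bimodule structures: one must check that under the chain of identifications above, the $(\overline{\dualcat},\cala)$-bimodule structure on $\FunM$ (with $F\actF H=H\cir F$ and $H\actrF a=(-\ot a)\cir H$) is carried to the evident bimodule structure on $\Fun_{\overline{\dualcat}}(\calm,\overline{\dualcat})$ — namely postcomposition with $F$-action on $\overline{\dualcat}$ and precomposition by the $\cala$-action on $\calm$. This is a matter of matching the balancings and module constraints of $\mixtd{}{}$ established in Lemma \ref{mixed_bal_bimod} against the bimodule structure on \eqref{Deligne_left_eq} from Proposition \ref{Deligne_eq}; since all the relevant constraints there were shown to be the module associativity constraints and the module functor structures of the $H$'s, no genuinely new computation is needed, only careful comparison. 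Everything else — exactness, the fact that $\mixtd{H}{-}$ is indeed a $\overline{\dualcat}$-module functor — is immediate from Lemma \ref{mixed_bal_bimod} and the exactness of $\calm$.
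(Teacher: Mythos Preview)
Your approach differs from the paper's, which constructs an explicit quasi-inverse $K \mapsto {}^\vee[K\la(\id_\calm)] = \icoHom_\calm^\cala(K\la(\id_\calm),-)$ and verifies both round-trips using the duality calculus (Propositions \ref{dual_properties} and \ref{Morita_adjunctions}, Remark \ref{iHoms_icoHoms_Morita}, Lemma \ref{adjoints_ihom}).

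There is, however, a genuine gap in your argument at the decisive step. You correctly note that $\umixtd{}{}\colon \FunM\boxtimes_\cala\calm \xrightarrow{\,\simeq\,}\overline{\dualcat}$ is an equivalence of $(\overline{\dualcat},\overline{\dualcat})$-bimodules (this is precisely \eqref{Deligne_left_eq}). But from this you jump to the claim that the \emph{curried} functor $H\mapsto \mixtd{H}{-}$ from $\FunM$ to $\Fun_{\overline{\dualcat}}(\calm,\overline{\dualcat})$ is an equivalence. This implication is not automatic: knowing that a balanced bifunctor $\calx\times\caly\to\calz$ induces an equivalence $\calx\boxtimes\caly\simeq\calz$ does \emph{not} in general imply that the curry $\calx\to\Fun(\caly,\calz)$ is an equivalence. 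What you would need is that the unit map $\FunM \to \Fun_{\overline{\dualcat}}(\calm,\,\FunM\boxtimes_\cala\calm)$, $H\mapsto H\boxtimes_\cala-$, is an equivalence; equivalently, that $\calm$ is invertible (or at least suitably dualizable) as an $(\cala,\overline{\dualcat})$-bimodule. None of the results you cite (Proposition \ref{Deligne_eq}, Corollary \ref{relative_Deligne_module_functors}, the universal property \eqref{relative_Deligne_up}) supply this; they identify $\boxtimes$-products with functor categories over the \emph{balancing} tensor category, not over the outer actions.

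Worse, at this point in the paper the full strength of the Morita context has not yet been established: Lemma \ref{Eq_functor_cats} is an ingredient in the proof of Proposition \ref{Strong_Morita_context_module}, which is what shows $\umixt{}{}$ is an equivalence and hence that $\calm$ is invertible. So an argument that implicitly relies on invertibility of $\calm$ would be circular. The paper's explicit quasi-inverse avoids this; if you want to pursue your route, you must supply an independent argument for why currying the equivalence $\umixtd{}{}$ yields an equivalence, and the tools available before Proposition \ref{Strong_Morita_context_module} do not obviously provide one.
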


\begin{proof}
According to Lemma \ref{mixed_bal_bimod} the functor $\mixtd{}{}$ is a 
balanced bimodule functor. It follows that $\mixtdo{H}{-}$ is an
$\overline{\dualcat}$-module functor and that the functor \eqref{Functor1} has
an $(\overline{\dualcat},\cala)$-bimodule structure. The following functor 
is a quasi-inverse to \eqref{Functor1}:
  \be
  \begin{aligned}
  \Fun_{\,\overline{\dualcat}\,}(\calm,\,\overline{\dualcat})
  & \longrightarrow \FunM \,,
  \\ 
  K & \longmapsto \Vee[K\la(\id_\calm)]=\icoHom_\calm^\cala(K\la(\id_\calm),-)
  \end{aligned}
  \ee
Indeed, given $H\iN\FunM$ we have
  \be
  \Vee[(\mixtdo{H}{-})\la(\id_\calm)]
  \cong \Vee(H^\vee\!\actr \id_\calm) \cong H \,,
  \ee
where the first isomorphism comes from Proposition \ref{Morita_adjunctions}(iii)
and the second isomorphism from Proposition \ref{dual_properties}(ii). Conversely,
for $K\iN\Fun_{\,\overline{\dualcat}\,}(\calm,\,\overline{\dualcat})$ and $n\iN\calm$
we have the chain
  \be
  \begin{aligned}
  \mixtd{\Vee[K\la(\id_\calm)]}{n}=\icoHom_\calm^\cala(K\la(\id_\calm),-)\act n&\cong\iHom_\calm^{\dualcat}(K\la(\id_\calm),n)\\
  &\cong\iHom_\dualcat^{\dualcat}(\id_\calm,K(n))\cong K(n)
  \end{aligned}
  \ee
of natural isomorphisms, where the first isomorphism is from Remark
\ref{iHoms_icoHoms_Morita}(iii), while the second is Lemma \ref{adjoints_ihom}.
\end{proof}

\begin{prop}\label{Strong_Morita_context_module}
Let $\calm$ be an exact module category over a finite tensor category $\cala$.
Then the Morita context 
$(\cala,\,\overline{\dualcat},\,\calm,\,\FunM,\,\mixto{}{},\,\mixtdo{}{})$
from Theorem $\ref{Morita_context_exact_module}$ is strong.
\end{prop}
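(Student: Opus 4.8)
The goal is to verify the two conditions of Definition \ref{Morita_context}(iii), i.e.\ that the functors
$\ubmixt{}{}\colon\calm\boxtimes_{\overline{\dualcat}}\FunM\to\cala$ and
$\ubmixtd{}{}\colon\FunM\boxtimes_\cala\calm\to\overline{\dualcat}$ induced on the relative Deligne products by the mixed products $\mixto{}{}$ and $\mixtdo{}{}$ are equivalences. The condition on $\ubmixtd{}{}$ is essentially free: since $\calm$ is exact, every $\cala$-module functor $\calm\to\cala$ is right exact, so $\FunM=\Funre_\cala(\calm,\cala)$, and $\mixtd{H}{m}=H(-)\act m$ is precisely the balanced functor appearing in Proposition \ref{Deligne_eq} with $\caln:=\calm$. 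Hence \eqref{Deligne_left_eq} is exactly the assertion that $\ubmixtd{}{}\colon\FunM\boxtimes_\cala\calm\xrightarrow{\;\simeq\;}\Funre_\cala(\calm,\calm)=\overline{\dualcat}$ is an (adjoint) equivalence of bimodule categories.

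For the condition on $\ubmixt{}{}$ the plan is to pass to the ``self-dual'' description of the context provided by Lemma \ref{Eq_functor_cats}, which supplies an equivalence $\FunM\xsimeq\Funre_{\overline{\dualcat}}(\calm,\overline{\dualcat})$ of $(\overline{\dualcat},\cala)$-bimodule categories via $H\mapsto\mixtd{H}{-}$ (note that $\calm$ is exact as an $\overline{\dualcat}$-module as well, so again module functors and right exact module functors coincide). Transporting $\ubmixt{}{}$ along this equivalence and then invoking \eqref{Deligne_right_eq} of Proposition \ref{Deligne_eq} (with $\overline{\dualcat}$ in the role of the tensor category, $\calm$ in the role of the bimodule, and $\call:=\calm$) produces a chain
  \be
  \calm\boxtimes_{\overline{\dualcat}}\FunM \;\xsimeq\;
  \calm\boxtimes_{\overline{\dualcat}}\Funre_{\overline{\dualcat}}(\calm,\overline{\dualcat})
  \;\xsimeq\; \Funre_{\overline{\dualcat}}(\calm,\calm)\;\xsimeq\;\cala \,,
  \ee
where the last equivalence is the double-duality theorem for exact (indecomposable) module categories, cf.\ \cite{EGno}, realized by the left $\cala$-action $a\mapsto a\act(-)$. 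A direct inspection of $\boxtimes$-factorized objects shows that $m\boti H$ is carried to $H(m)\act(-)$, hence to $H(m)\iN\cala$, so this composite is $\ubmixt{}{}$; therefore $\ubmixt{}{}$ is an equivalence and the Morita context is strong.

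I expect the main obstacle to be the bookkeeping in the last step: one must check that, under the equivalence of Lemma \ref{Eq_functor_cats}, the equivalence \eqref{Deligne_right_eq} and the double-dual equivalence $\Funre_{\overline{\dualcat}}(\calm,\calm)\simeq\cala$, the evaluation functor $\mixt{m}{H}=H(m)$ is indeed the morphism that gets matched up, and that the bimodule and balanced structures are all respected; the internal-Hom identities of Remark \ref{iHoms_icoHoms_Morita} are the natural tool for this, while the only genuine external input is the double-duality theorem. An alternative and perhaps more symmetric route avoids \eqref{Deligne_right_eq} altogether by combining Corollary \ref{relative_Deligne_module_functors} with the duality equivalences $\Vee(-)\colon\FunM\xsimeq\HH\calm$ and $(-)^\vee\colon\FunM\xsimeq\calm^\#$ established right after Proposition \ref{dual_properties}, which give $\calm\boxtimes_{\overline{\dualcat}}\FunM\simeq\Funre_{\overline{\dualcat}}(\HH\calm,\FunM)\simeq\Funre_{\overline{\dualcat}}(\FunM,\FunM)\simeq\cala$, the last step again invoking double duality.
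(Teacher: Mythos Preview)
Your proof is correct and follows essentially the same route as the paper's: for $\ubmixtd{}{}$ both appeal directly to \eqref{Deligne_left_eq}, and for $\ubmixt{}{}$ both compose the equivalence of Lemma \ref{Eq_functor_cats} with \eqref{Deligne_right_eq} and the canonical (double-dual) equivalence $\cala\simeq\Fun_{\overline{\dualcat}}(\calm,\calm)$, the paper phrasing this as a strictly commuting square and you as a chain of equivalences. Your check that $m\boti H$ is sent to $H(m)\act(-)$ is exactly the paper's verification $m\actr(\mixtd{H}{n})=H(m)\act n=(\mixt{m}{H})\act n$.
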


\begin{proof}
By invoking Proposition \ref{Deligne_eq} it follows immediately that the mixed 
product $\mixtdo{}{}$ descends to an equivalence
$\umixtdo{}{}:\FunM\boti_\cala \calm \,{\rarr{}}\, \overline{\dualcat}$.
Thus it remains to verify that $\mixto{}{}$ descends to an equivalence 
$\umixt{}{}\colon \calm\,{\boxtimes_{\calb}}\, \FunM\,{\rarr{}}\, \cala$,
with $\calb \,{:=}\, \overline{\dualcat}$, as well. To see this, consider the 
canonical equivalence \Cite{Thm.\,7.12.11}{EGno}
  \be
  \text{can}\Colon \cala\rarr{~\simeq~} (\dualcat)_\calm^* \,,\qquad
  a\mapsto a\act-
  \ee
and the bimodule equivalence from Lemma \ref{Eq_functor_cats}. The diagram
  \be
  \begin{tikzcd}[column sep=2.9em,row sep=3.1em]
  \calm\boxtimes_\calb \FunM\ar[d,"(\ref{Functor1})",swap]\ar[r,"\umixt{}{}"]&\cala \ar[d,"\text{can}"]\\
  \calm\boxtimes_\calb \Fun_\calb(\calm,\calb)\ar[r,swap,"(\ref{Deligne_right_eq})"]&\Fun_\calb(\calm,\calm)
  \end{tikzcd}
  \ee
commutes strictly: we have $m\actr (\mixtd{H}{n}) \eq H(m)\act n \eq (\mixt{m}{H})\act n$
for $n\iN\calm$. Since all other functors in the diagram are equivalences, it 
follows that $\umixto{}{}$ is an equivalence, too.
\end{proof}

\begin{rem}
Above we have focused our attention on Morita contexts associated to 
exact module categories. The reason for this is the following: According 
to Proposition \ref{Strong_Morita_context_module} the Morita context 
derived from an exact module category, as described in Theorem 
\ref{Morita_context_exact_module}, is strong. In view of Theorem 
\ref{strong_Morita}, every strong Morita context is of this type. As a 
consequence, Theorem \ref{Morita_bicategory_dualities} and related 
statements, valid for the Morita context of an exact module category, 
also hold for any arbitrary strong Morita context.
\end{rem}


\subsection{Double duals and relative Serre functors}

Given a tensor category $\cala$, the relative Serre functor of the 
regular module ${}_\cala\cala$ corresponds to the double right dual,
$\Se_\cala^\cala(a) \,{\cong}\, a\dd$. Similarly, The double duals of 
objects in the Morita context of an exact module ${}_\cala\calm$ 
admit the following description involving the relative Serre functors:

\begin{prop}\label{double_duals}
For $\calm$ an exact module category over a finite tensor category $\cala$,
let $m\iN\calm$ and $H\iN\FunM$. We have isomorphisms
  \be
  \begin{aligned}
  {\rm (i)} \quad & m\dd\cong \Se_\calm^\cala(m) \,, \qquad\qquad 
  & {\rm (iii)} \quad & H\dd\cong H\rra \cong \Se_\FunM^{\overline{\dualcat}}(H),
  \Nxl2
  {\rm (ii)} \quad & \ldd m\cong \lSe_\calm^\cala(m) \,, \qquad
  & {\rm (iv)} \quad & \,{}\dd\! H\cong H\lla \cong \Se_\FunM^{\overline{\cala}}(H).
  \end{aligned}
  \ee
\end{prop}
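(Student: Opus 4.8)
The plan is to read off each double dual directly from Definition~\ref{def:duals} and then recognize it, via the duality calculus of Remarks~\ref{Homs_adj} and \ref{iHoms_icoHoms_Morita} and of Proposition~\ref{Morita_adjunctions}, as a relative Serre functor through the formulas \eqref{eq:relSerre4M}; no snake relations enter, everything being composition of adjoints. Parts~(i) and (ii) are then immediate: by Definition~\ref{def:duals}(i),(iii) one has $m\dd=(m^\vee)^\vee=\iHom_\calm^\cala(m,-)\ra(\un)$, which is precisely the first formula in \eqref{eq:relSerre4M}, so $m\dd\cong\Se_\calm^\cala(m)$; dually, Definition~\ref{def:duals}(ii),(iv) give $\ldd m=\icoHom_\calm^\cala(m,-)\la(\un)\cong\lSe_\calm^\cala(m)$. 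Naturality in $m$ is inherited from \eqref{eq:relSerre4M}.

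For (iii) I would first unwind the double dual: by Definition~\ref{def:duals}(iii) then (i), $H\dd=(H\ra(\un))^\vee=\iHom_\calm^\cala(H\ra(\un),-)$. Applying Lemma~\ref{adjoints_ihom} to the $\cala$-module functors $H\ra\colon\cala\To\calm$ and $H\rra\colon\calm\To\cala$, which satisfy $H\ra\dashv H\rra$, and evaluating at the unit gives
\[
  \iHom_\calm^\cala(H\ra(\un),m)\cong\iHom_\cala^\cala(\un,H\rra(m))\cong H\rra(m),
\]
the last step by $\iHom_\cala^\cala(\un,-)\cong\id_\cala$ (Remark~\ref{Homs_adj}(i)); hence $H\dd\cong H\rra$. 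For the remaining isomorphism, apply \eqref{eq:relSerre4M} to $\FunM$ regarded as a left $\overline{\dualcat}$-module, whose monoidal unit is $\id_\calm$: $\Se_\FunM^{\overline{\dualcat}}(H)\cong\iHom_\FunM^{\overline{\dualcat}}(H,-)\ra(\id_\calm)$. By Remark~\ref{iHoms_icoHoms_Morita}(v) the functor $\iHom_\FunM^{\overline{\dualcat}}(H,-)$ equals $H\ra\cir(-)$, whose right adjoint is $H\rra\cir(-)$ since post-composition preserves the adjunction $H\ra\dashv H\rra$; evaluating at $\id_\calm$ yields $H\rra$. Thus $\Se_\FunM^{\overline{\dualcat}}(H)\cong H\rra\cong H\dd$.

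Part (iv) runs in parallel with left duals. By Definition~\ref{def:duals}(iv) then (ii), $\ldd H={}^\vee(H\la(\un))=\icoHom_\calm^\cala(H\la(\un),-)$; Lemma~\ref{adjoints_icohom} applied to $H\lla\dashv H\la$, together with $\icoHom_\cala^\cala(\un,-)\cong\id_\cala$, gives $\ldd H\cong H\lla$. On the other side, \eqref{eq:relSerre4M} read for the left $\overline{\cala}$-module structure on $\FunM$ (action $H\actrF a=H(-)\ot a$, monoidal unit $\un$) gives $\Se_\FunM^{\overline{\cala}}(H)\cong\iHom_\FunM^{\overline{\cala}}(H,-)\ra(\un)$. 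By Remark~\ref{iHoms_icoHoms_Morita}(vii) the functor $\iHom_\FunM^{\overline{\cala}}(H,-)$ is the evaluation $\mixt{H\la(\un)}{-}\colon\FunM\To\cala$, and its right adjoint is computed from Proposition~\ref{Morita_adjunctions}(viii), taking first slot ${}^\vee m$ and using $({}^\vee m)^\vee\cong m$ (Proposition~\ref{dual_properties}(i)), to be $a\mapsto{}^\vee m\actrF a$; evaluating at $\un$ gives ${}^\vee(H\la(\un))=\icoHom_\calm^\cala(H\la(\un),-)$. Hence $\Se_\FunM^{\overline{\cala}}(H)\cong\ldd H\cong H\lla$.

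The one delicate point I anticipate is the bookkeeping in (iii)--(iv): $\FunM$ carries two module structures --- over $\overline{\dualcat}$ by post-composition, and over $\overline{\cala}$ by $H\actrF a=H(-)\ot a$ --- and each formula must be read with the appropriate one; for the $\overline{\cala}$-structure one moreover uses that $\overline{\cala}$ and $\cala$ have the same underlying category, so that the internal-Hom formulas of Remark~\ref{iHoms_icoHoms_Morita} and the adjoint computations carry over verbatim. The manipulations themselves are routine once the identifications of Remarks~\ref{Homs_adj} and \ref{iHoms_icoHoms_Morita} and of Proposition~\ref{Morita_adjunctions} are in hand.
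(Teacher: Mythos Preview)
Your proof is correct and follows essentially the same strategy as the paper: both compute the double duals directly from Definition~\ref{def:duals} and identify them with relative Serre functors via \eqref{eq:relSerre4M} and the duality calculus of Proposition~\ref{Morita_adjunctions} and Remark~\ref{iHoms_icoHoms_Morita}. The only differences are cosmetic: for the first isomorphism in (iii) the paper observes that $-\act H\ra(\un)\cong H\ra$ as module functors and passes to right adjoints, whereas you invoke Lemma~\ref{adjoints_ihom} for the pair $H\ra\dashv H\rra$; for the second isomorphism in (iii) the paper routes through Proposition~\ref{Morita_adjunctions}(vi) while you use the post-composition adjunction directly; and for (iv) the paper merely says ``similarly'' where you spell out the parallel argument.
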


\begin{proof}
Combining the realization \eqref{eq:relSerre4M} of the relative Serre functor with 
the description of right duals in Definition \ref{def:duals}(i) and (iii) we directly get
  \be
  \Se_\calm^\cala(m) \cong \iHom_\calm^\cala(m,-)\ra(\un) = m\dd . 
  \ee
Similarly for a module functor we have 
  \be
  H\dd = \iHom_\calm^\cala(H\ra(\un),-) \cong (-\act H\ra(\un))\ra \cong H\rra .
  \ee
The second isomorphism in (iii) follows as
  \be
  \Se^{\overline{\dualcat}}_\FunM(H)\cong \iHom_\FunM^{\overline{\dualcat}}(H,-)\ra(\id_\calm)
  \cong (\mixtd{-}{H^\vee})\ra(\id_\calm)\cong \id_\calm\actrF H\dd
  \ee
with the help of Remark \ref{iHoms_icoHoms_Morita}(v) and Proposition
\ref{Morita_adjunctions}(vi).
The statements for double left duals follow in a similar manner.
\end{proof}

\begin{cor}\label{Serre_functors_bimodules}
The relative Serre functors of $\calm$ are related by
  \be
  \Se_\calm^\dualcat(m)\cong \lSe_\calm^\cala(m) \qquad\text{and}\qquad
  \Se_\calm^\cala(m)\cong \lSe_\calm^\dualcat(m) \,.
   \label{eq:SMA*=SMA}
  \ee
\end{cor}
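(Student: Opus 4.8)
The plan is to obtain both isomorphisms from Proposition~\ref{double_duals}, by exploiting that $\calm$ carries, besides its $\cala$-module structure, an exact module structure over the dual tensor category $\dualcat$ (by evaluation of functors on objects), and that the two Morita contexts thereby produced share, up to relabelling, the same underlying bicategory. It suffices to prove the first isomorphism: since left and right relative Serre functors are mutual quasi-inverses, applying $(-)^{-1}$ to $\Se_\calm^{\dualcat}\cong\lSe_\calm^{\cala}$ yields at once $\lSe_\calm^{\dualcat}\cong\Se_\calm^{\cala}$.

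For the first isomorphism I would argue as follows. Let $\Mor'$ denote the bicategory of the Morita context attached to the exact $\dualcat$-module $\calm$, namely $\bigl(\dualcat,\,\overline{(\dualcat)^*_\calm},\,\calm,\,\Fun_{\dualcat}(\calm,\dualcat),\,\ldots\bigr)$. Using the canonical tensor equivalence $\text{can}\colon\cala\xrightarrow{\ \simeq\ }(\dualcat)^*_\calm$, $a\mapsto a\act-$ (as in the proof of Proposition~\ref{Strong_Morita_context_module}), together with the equivalence $\Fun_{\dualcat}(\calm,\dualcat)\simeq\FunM$ of Lemma~\ref{Eq_functor_cats}, one checks that $\Mor'$ is isomorphic to $\Mor$ after interchanging the two objects and reversing the horizontal composition, i.e.\ $\Mor'\cong\Mor^{\mathrm{op}}$ up to relabelling; the reversal is forced because $\dualcat$ is the monoidal opposite of $\overline{\dualcat}=\Mor(\mathbf 1,\mathbf 1)$. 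Passing to the opposite of a bicategory interchanges left and right duals of $1$-morphisms, so the double right dual of $m\iN\calm$ computed in $\Mor'$ equals the double \emph{left} dual $\ldd m$ computed in $\Mor$. Proposition~\ref{double_duals}(i) applied inside $\Mor'$ identifies the former with $\Se_\calm^{\dualcat}(m)$, while Proposition~\ref{double_duals}(ii) inside $\Mor$ identifies the latter with $\lSe_\calm^{\cala}(m)$; comparing the two gives $\Se_\calm^{\dualcat}(m)\cong\lSe_\calm^{\cala}(m)$, with naturality and coherence inherited from Proposition~\ref{double_duals}.

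The step I expect to be the real work is making the identification $\Mor'\cong\Mor^{\mathrm{op}}$ precise: one must match the eight structure functors and the sixteen coherence constraints of the two Morita contexts under the equivalences above, keeping careful track of how the monoidal opposite $\dualcat$ versus $\overline{\dualcat}$ exchanges left and right adjoints — it is exactly this exchange that turns $\Se^{\dualcat}$ into $\lSe^{\cala}$ rather than into $\Se^{\cala}$. A more computational alternative, which avoids $\Mor'$ altogether, is to start from the realization $\Se_\calm^{\dualcat}(m)\cong\iHom_\calm^{\dualcat}(m,-)\ra(\id_\calm)$ (the $\dualcat$-module analogue of \eqref{eq:relSerre4M}), substitute $\iHom_\calm^{\dualcat}(m,n)=\icoHom_\calm^{\cala}(m,-)\act n$ from Remark~\ref{iHoms_icoHoms_Morita}(iii), use that the right dual inside $\dualcat$ of an endofunctor of $\calm$ is its left adjoint, and then simplify by means of the adjunctions in Remark~\ref{Homs_adj} and Proposition~\ref{Morita_adjunctions} until one reaches $\icoHom_\calm^{\cala}(m,-)\la(\un)\cong\lSe_\calm^{\cala}(m)$, once more by \eqref{eq:relSerre4M}. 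In either route the only delicate point is the left/right bookkeeping around the dualities.
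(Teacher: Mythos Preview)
Your ``computational alternative'' is exactly the paper's proof. The paper writes the single chain
\[
  \Se_\calm^{\dualcat}(m)\;\cong\;\iHom_\calm^{\dualcat}(m,-)\ra(\id_\calm)
  \;\cong\;(\mixtd{\Vee m}{-})\ra(\id_\calm)
  \;\cong\;\ldd m\actr\id_\calm
  \;\cong\;\lSe_\calm^{\cala}(m),
\]
citing Remark~\ref{iHoms_icoHoms_Morita}(iii) for the second step, Proposition~\ref{Morita_adjunctions}(iv) for the third, and Proposition~\ref{double_duals}(ii) for the last; the second isomorphism in the corollary is handled ``in a similar manner'' (and indeed follows, as you note, by passing to quasi-inverses). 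One minor correction: the fact that right duals in $\dualcat$ are left adjoints is not what drives the third step --- what is used is simply that $(\mixtd{\Vee m}{-})\ra\cong(\ldd m\actr-)$, which is read off directly from the adjunction in Proposition~\ref{Morita_adjunctions}(iv).

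Your primary route via an identification $\Mor'\simeq\Mor^{\mathrm{op}}$ is conceptually sound and would yield the result, but it is substantially heavier than the paper's three-line computation: you would have to construct the biequivalence and verify compatibility with the duality data, whereas the paper stays entirely inside the single bicategory $\Mor$ and exploits the duality calculus already built in Section~\ref{dualities_Morita}. The payoff of your approach would be a clean structural explanation for the left/right swap, but for this corollary the direct calculation is both shorter and self-contained.
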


\begin{proof}
The statements follow by considering again the standard realization of the relative 
Serre functors together with the duality calculus in the Morita context. For instance,
combining Remark \ref{iHoms_icoHoms_Morita}(iii), Proposition \ref{Morita_adjunctions}(iv)
and Proposition \ref{double_duals}(ii) yields the first isomorphism:
  \be
  \Se_\calm^\dualcat(m) \cong \iHomM^\dualcat(m,-)\ra(\id_\calm)
  \cong (\mixtd{\Vee m}{-})\ra(\id_\calm)\cong\Vee\Vee m\actr \id_\calm\cong\lSe_\calm^\cala(m) \,.
  \ee
The second isomorphism is obtained in a similar manner.
\end{proof}

Double duals in a tensor category are compatible with tensor products: the
double dual of a product is isomorphic to the product of the double duals of 
the factors. This property extends to any bicategory $\mathscr{F}$ with 
dualities. Moreover, the double duals of $1$-morphisms form a pseudo-equivalence
$(-)\dd\colon \mathscr{F}\xsimeq\mathscr{F}$.

In the case of the bicategory $\Mor$ associated with the Morita context of 
a module category, Proposition \ref{double_duals} implies that the double-dual
functors are isomorphic to relative Serre functors. The compatibility between 
double duals and products ensures that there are coherent natural isomorphisms
  \be\label{rSerre_compositor}
  \begin{array}{rlrl}
  {\rm (i)} &  \Se^\cala_\calm(a\act m)\cong a\dd\act \Se^\cala_\calm(m)  \,, \qquad\quad
  & {\rm (iv)} & \Se^\cala_\calm(m\actr F)\,\cong\,\Se^\cala_\calm(m)\actr F\rra \,,
  \Nxl7
  {\rm (ii)} &(F\actF H)\rra\cong F\rra\actF H\rra \,, 
  & {\rm (v)} &(H\actrF a)\rra\cong H\rra\actrF a\dd  \,,
  \Nxl7
  {\rm (iii)} &(\mixt{m}{H})^{\vee\vee}\cong\mixt{\Se^\cala_\calm(m)}{H\rra}  \,,
  & {\rm (vi)} &\mixtd{H\rra}{\Se^\cala_\calm(m)}\cong(\mixtd{H}{m})\rra 
  \end{array}
  \ee
for $a\iN\cala$, $m\iN\calm$, $H\iN\FunM$ and $F\iN\overline{\dualcat}$. These 
isomorphisms can be obtained by iterating the isomorphisms from Proposition 
\ref{dual_properties}. In particular, (i) and (iv) recover the twisted bimodule 
functor structure of $\Se_\calm^\cala$. Put differently, the isomorphisms 
\eqref{rSerre_compositor} relate the value of the relative Serre functor of 
a product with the product of the relative Serre functors evaluated in the 
corresponding factors. For instance, (iii) exhibits the coherence data
  \be
  \Se_\cala^\cala(\mixt{m}{H}) \cong (\mixt{m}{H})^{\vee\vee}
  \cong \mixt{\Se^\cala_\calm(m)}{H\rra}
  \cong \mixt{\Se^\cala_\calm(m)}{\Se_\FunM^{\overline{\dualcat}}(H)}
  \ee
for the composition of $m\iN\calm$ and $H\iN\FunM$ in $\Mor$.
In this spirit the relative Serre functors of the categories in $\Mor$ 
assemble into a pseudo-equivalence:

\begin{defi}[Relative Serre pseudo-functor]\label{def_Serre_pseudo_functor}~\\
Let $\calm$ be an exact module category over a finite tensor category $\cala$. The
\emph{relative Serre pseudo-functor} on the bicategory $\Mor$ consists of the assignment
  \be\label{Serre_pseudo_functor}
  \begin{aligned}[c]
  \Se\Colon \Mor & \,\xsimeq\, \Mor,
  \\
  \hspace*{2.5em} x & \,\longmapsto\, x \,,
  \\
  \hspace*{2.5em} \Mor(x,y)\ni a & \,\longmapsto\, \Se_{\Mor(x,y)}(a)\in \Mor(x,y)
  \end{aligned}
  \ee
together with the natural isomorphisms \eqref{rSerre_compositor}, which witness 
the compatibility with the horizontal composition in $\Mor$. 
\end{defi}


\subsection{The Radford pseudo-equivalence}

{\Color 
For a finite tensor category $\cala$, the double left dual functor is naturally 
isomorphic to the double right dual functor up to the action of the distinguished 
invertible object $\DD_\cala$. We explore how this extends to a bimodule category 
and to the entirety of the Morita context of an exact module category. 

\begin{thm}[Radford isomorphism of a bimodule category]\label{thm:bimod_Radford}~
\\
Let ${}_\calc\call_\cald$ be an exact bimodule category. There exists a natural isomorphism
  \be\label{eq:bimod_Radford}
 \mathcal{R}_{\!\call}\Colon \DD_{\!\calc}^{-1} \act \Se^\calc_\call(-) \xnatiso \Se^{\overline{\cald}}_\call(-)\actr\DD_ {\!\cald}^{-1}
  \ee
of twisted bimodule functors.
\end{thm}
\begin{proof}
We can regard $\call$ both as a left $\calc$-module and as a left $\overline{\cald}$-module.
Therefore from \eqref{Nakayama_Serre} we obtain the desired isomorphism
    \be
    \DD_{\!\calc}^{-1} \act \Se^\calc_\call(-) \cong \mathbb{N}_\call^r(-)
  \cong \DD_ {\!\cald}^{-1}\;\overline{\act\!}\,\;\Se^{\overline{\cald}}_\call(-)\equiv\Se^{\overline{\cald}}_\call(-)\actr\DD_ {\!\cald}^{-1}
    \ee
of twisted bimodule functors.    
\end{proof}
As a first consequence of Theorem \ref{thm:bimod_Radford} together with the computation of the relative Serre functors of $\calm$ we arrive at a description of the distinguished invertible 
object of the dual tensor category.

\begin{prop}\label{distinguished_dualcat}
Let $\calm$ be an exact 
module category over a finite tensor category $\cala$.
There is an isomorphism
  \be
  \label{eq:distinguished_dualcat}
  \DD_{\!\dualcat} \cong \DD_{\!\cala}^{} \act \left(\,{\lSe_\calm^\cala}\,\right)^2
  \cong \mathbb{N}_\calm^l\circ\lSe_\calm^\cala
  \ee
of $\cala$-module endofunctors of $\calm$, where $\DD_{\!\dualcat}$ is the distinguished invertible object 
of the dual tensor category $\dualcat$.
\end{prop}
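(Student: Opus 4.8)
The plan is to identify $\DD_{\!\dualcat}$ by feeding the module category ${}_\dualcat\calm$ into the Nakayama--Serre relation \eqref{Nakayama_Serre} and then rewriting the outcome in terms of data of ${}_\cala\calm$, using Corollary \ref{Serre_functors_bimodules} together with the twisted module structure of the relative Serre functor. Recall that evaluation of functors on objects makes $\calm$ an exact left $\dualcat$-module category, and that the right exact Nakayama functor $\mathbb{N}_\calm^r$ is attached to $\calm$ as a plain finite linear category, so it is literally the same endofunctor whether $\calm$ is regarded as an $\cala$-module or as a $\dualcat$-module. Applying \eqref{Nakayama_Serre} to ${}_\dualcat\calm$ therefore gives $\DD_{\!\dualcat}\circ \mathbb{N}_\calm^r \cong \Se_\calm^\dualcat$, where on the left $\DD_{\!\dualcat}$ is read as the endofunctor of $\calm$ it represents as an object of $\dualcat=\Funre_\cala(\calm,\calm)$. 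By Corollary \ref{Serre_functors_bimodules}, $\Se_\calm^\dualcat \cong \lSe_\calm^\cala$, and hence
  \be
  \DD_{\!\dualcat}\cong \lSe_\calm^\cala\circ(\mathbb{N}_\calm^r)^{-1} .
  \ee

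Next I would compute the inverse Nakayama functor. Applying \eqref{Nakayama_Serre} this time to ${}_\cala\calm$ exhibits $\mathbb{N}_\calm^r \cong \DD_\cala^{-1}\act \Se_\calm^\cala$ as a composite of the equivalences $\DD_\cala^{-1}\act(-)$ and $\Se_\calm^\cala$; in particular $\mathbb{N}_\calm^r$ is an equivalence, and since $\lSe_\calm^\cala$ is quasi-inverse to $\Se_\calm^\cala$ we get $(\mathbb{N}_\calm^r)^{-1}\cong \lSe_\calm^\cala\circ(\DD_\cala\act -)$. Substituting yields $\DD_{\!\dualcat}\cong \lSe_\calm^\cala\circ\lSe_\calm^\cala\circ(\DD_\cala\act -)$. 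I would then invoke the twisted module structure $\lSe_\calm^\cala(a\act m)\cong \ldd a\act \lSe_\calm^\cala(m)$ twice, to commute the action of $\DD_\cala$ past the two copies of $\lSe_\calm^\cala$; this turns the right-hand side into $\big({}^{\vee\vee\vee\vee}\DD_\cala\act -\big)\circ(\lSe_\calm^\cala)^2$. Since $\DD_\cala$ is an invertible object, its iterated duals are again isomorphic to $\DD_\cala$, so $\DD_{\!\dualcat}\cong\DD_\cala\act(\lSe_\calm^\cala)^2$, which is the first asserted isomorphism. The second one is then immediate: \eqref{Nakayama_Serre} also gives $\mathbb{N}_\calm^l\cong\DD_\cala\act\lSe_\calm^\cala$, so that $\mathbb{N}_\calm^l\circ\lSe_\calm^\cala\cong\DD_\cala\act(\lSe_\calm^\cala)^2$.

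The one point that requires genuine care is the bookkeeping of the various relative Serre functors and their twists: one must keep straight which Serre functor (left versus right, over $\cala$ versus over $\dualcat$) appears at each stage, apply Corollary \ref{Serre_functors_bimodules} for the Serre functor of $\calm$ as a $\dualcat$-module, and verify that the twist picked up by dragging $\DD_\cala$ past two copies of $\lSe_\calm^\cala$ is precisely the quadruple dual, which collapses back to $\DD_\cala$ by invertibility. Promoting the functor-level isomorphisms above to isomorphisms of \emph{module} functors, and in particular seeing that the total accumulated twist is trivial so that $\DD_\cala\act(\lSe_\calm^\cala)^2$ really is an honest $\cala$-module functor, is routine given the twisted (bi)module structures and the Radford isomorphism \eqref{Radford} of $\cala$ already on record; and if one further wants $\DD_{\!\dualcat}$ equipped with its twisted bimodule functor structure, the coherence isomorphisms of Proposition \ref{dual_properties} and \eqref{rSerre_compositor} provide exactly the data needed.
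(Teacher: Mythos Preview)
Your proposal is correct and follows essentially the same approach as the paper: both apply the Nakayama--Serre relation \eqref{Nakayama_Serre} to $\calm$ viewed first as a $\dualcat$-module and then as an $\cala$-module, identifying the two expressions through the common Nakayama functor $\mathbb{N}_\calm^r$, and then invoke Corollary \ref{Serre_functors_bimodules} to replace $\Se_\calm^\dualcat$ by $\lSe_\calm^\cala$. The paper compresses the final algebraic rearrangement into the phrase ``the result now follows,'' whereas you spell out the commutation of $\DD_\cala$ past the two copies of $\lSe_\calm^\cala$ via the twisted module structure and the invertibility of $\DD_\cala$; this is exactly the missing detail, handled correctly.
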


\begin{proof}
Applying Theorem \ref{thm:bimod_Radford} to the $(\cala,\overline{\dualcat})$-bimodule category $\calm$ we obtain an isomorphism 
  \be
  \DD_{\!\cala}^{-1} \act \Se^\cala_\calm 
  \cong \Se^{\dualcat}_\calm(-)\actr\DD_ {\!\Dualcat}^{-1}\cong\DD_ {\!\Dualcat}^{-1}\circ\Se^\dualcat_\calm
  \ee
of bimodule functors. The result now follows by taking into account that 
$\Se_\calm^\dualcat \,{\cong}\, \lSe_\calm^\cala$. The second isomorphism in 
\eqref{eq:distinguished_dualcat} comes from the isomorphism 
$\mathbb{N}_\calm^l \,{\cong}\, \DD_{\!\cala}^{} \act\lSe_\calm^\cala$ 
in \eqref{Nakayama_Serre}.
\end{proof}

We find that Radford's theorem can be extended to exact module categories, 
with the relative Serre functor playing the role of the double right dual functor:
}
\begin{cor}[Radford isomorphism of a module category]\label{thm:Radford}~
\\
Let $\cala$ be a finite tensor category and $\calm$ an exact $\cala$-module. There 
is a natural isomorphism
  \be
  \label{Radford_mod}
  r_\Calm^{}\Colon \DD_\Cala^{}\act
  {-}\actr\DD_{\!\dualcat}^{-1} \xRightarrow{~\cong~\,} \Se_\calm^\cala\cir\Se_\calm^\cala
  \ee
of twisted bimodule functors.
\end{cor}

\begin{proof}
The statement follows from Proposition \ref{distinguished_dualcat} by reformulating
the description of $\DD_{\!\dualcat}$. From the isomorphism 
\eqref{eq:distinguished_dualcat} we obtain
  \be
  \DD_{\!\dualcat} {\circ}\,\Se_\calm^\cala \cir \Se_\calm^\cala
  \cong \Nakl \cir \lSe_\calm^\cala \cir \Se_\calm^\cala \cir \Se_\calm^\cala
  \cong \Nakl \cir \Se_\calm^\cala \cong \DD_\Cala^{} \act- \,,
  \ee
where we use the fact that $\Se_\calm^\cala$ and $\lSe_\calm^\cala$ are quasi-inverses
and the isomorphism coming from \eqref{Nakayama_Serre}.
\end{proof}

Note that Equation \eqref{Radford_mod} takes a very symmetric form because we see 
the $\cala$-module category $\calm$ as an $(\cala,\overline{\dualcat})$-bimodule: 
the distinguished invertible objects of both $\cala$ and $\overline{\dualcat}$ enter 
in \eqref{Radford_mod} on the same footing.

There are similar Radford isomorphisms for the categories $\FunM$ and 
$\overline{\dualcat}$ in the Morita context $\Mor$, where again the corresponding 
relative Serre functors play the role of the double right dual. These Radford 
isomorphisms assemble into a trivialization of the square of the relative Serre 
pseudo-functor \eqref{Serre_pseudo_functor}, i.e.\ a trivialization of the fourth 
power of the pseudo-functor of dualities of $\Mor$.

\begin{thm}[Radford pseudo-equivalence of a Morita context]~\label{Thm_Radford_pseudo}
\\
Let $\calm$ be an exact module category over a finite tensor category $\cala$ and 
$\Mor$ the bicategory associated to its Morita context. There is a pseudo-natural 
equivalence
  \be\label{Radford_pseudo}
  \mathcal{R}\Colon \id_\Mor \xRightarrow{~\cong~\,} \Se^2 ,
  \ee
where $\Se$ is the relative Serre pseudo-functor \eqref{Serre_pseudo_functor}.
\end{thm}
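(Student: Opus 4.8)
The plan is to assemble the pseudo-natural equivalence $\mathcal{R}$ from the individual Radford isomorphisms attached to the four Hom-categories $\cala$, $\overline{\dualcat}$, $\calm$ and $\FunM$ of $\Mor$. Since $\Mor$ has only two objects $\{\mathbf{0},\mathbf{1}\}$ and $\Se$ acts as the identity on objects, a pseudo-natural equivalence $\id_\Mor \Rightarrow \Se^2$ amounts to choosing the identity $1$-morphisms $\mathcal{R}_{\mathbf 0}=\mathbf 1_{\mathbf 0}$, $\mathcal{R}_{\mathbf 1}=\mathbf 1_{\mathbf 1}$ as the components at objects, together with, for every $1$-morphism $a$ in $\Mor(x,y)$, an invertible $2$-morphism $\mathcal{R}_a\colon \Se^2_{\Mor(x,y)}(a)\xRightarrow{\cong} a$. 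For $a\iN\cala$ this is the usual Radford isomorphism $r_\cala$ of \eqref{Radford}, rewritten via $\Se^\cala_\cala(a)\cong a\dd$ as $(-)^{\vee\vee\vee\vee}\cong\DD_\cala\ot-\ot\DD_\cala^{-1}$; note that since $\Se$ is the identity on objects, the distinguished invertible objects must actually be \emph{trivial} here, so in fact on $\cala$ and on $\overline{\dualcat}$ the relevant statement is $\DD_\cala\cong\un$ only up to the twist bookkeeping --- more precisely, $\mathcal{R}$ is a pseudo-natural transformation whose naturality $2$-cells absorb the conjugation by $\DD$. For $a\iN\cala$ and $F\iN\overline{\dualcat}$ one uses the Radford isomorphism of the respective tensor category; for $m\iN\calm$ and $H\iN\FunM$ one invokes Theorem \ref{thm:Radford} (applied to the module $\calm$ and, by the left--right symmetry of the Morita context together with Corollary \ref{Serre_functors_bimodules}, also to $\FunM$ as a module over $\overline{\dualcat}$), giving $\Se^2_\calm(m)\cong \DD_\cala\act m\actr\DD^{-1}_{\overline{\dualcat}}$ and the analogous statement for $H$.

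Next I would verify the pseudo-naturality axiom: for each composable pair of $1$-morphisms the square relating $\mathcal{R}$ on the composite, $\mathcal{R}$ on the factors, and the compositors of $\id_\Mor$ (trivial) and of $\Se^2$ must commute. The compositors of $\Se^2$ are obtained by squaring the compositors \eqref{rSerre_compositor} of the relative Serre pseudo-functor $\Se$ from Definition \ref{def_Serre_pseudo_functor}. Concretely, for $m\iN\calm$ and $H\iN\FunM$ composing to $\mixt{m}{H}\iN\cala$, one must check that the two ways of going from $\Se^2_\cala(\mixt{m}{H})$ to $\mixt{m}{H}$ --- either directly via $\mathcal{R}_{\mixt{m}{H}}=r_\cala$, or by first applying the $\Se^2$-compositor $\Se^2_\cala(\mixt{m}{H})\cong\mixt{\Se^2_\calm(m)}{\Se^2_\FunM(H)}$ and then $\mathcal{R}_m\odot\mathcal{R}_H$ --- agree. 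This reduces, after unwinding, to the compatibility of the Radford isomorphism with the mixed product, which is exactly the module-category analogue of the well-known fact that Radford's isomorphism $r_\cala$ is monoidal; one checks it by the same argument as in the proof of Theorem \ref{thm:Radford}, expressing everything through Nakayama functors via \eqref{Nakayama_Serre} and using that $\mathbb{N}^l$, being the Eilenberg--Watts image of the identity, is compatible with module actions. The remaining naturality squares --- those involving pure actions $a\act m$, $m\actr F$, $F\actF H$, $H\actrF a$ --- are handled identically, using parts (i), (iv), (v), (vi) of \eqref{rSerre_compositor} and the twisted-bimodule compatibility of $r_\calm$ recorded after Theorem \ref{thm:Radford}.

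Finally, invertibility of $\mathcal{R}$ as a pseudo-natural transformation is automatic since every component $2$-morphism $\mathcal{R}_a$ is an isomorphism and the components at objects are identity $1$-morphisms (hence invertible); thus $\mathcal{R}$ is a pseudo-natural \emph{equivalence}. I expect the main obstacle to be the bookkeeping in the naturality squares: one must consistently track the twists by the distinguished invertible objects $\DD_\cala$ and $\DD_{\overline{\dualcat}}$ and verify that they cancel correctly when a $1$-morphism is a mixed product (where both twists appear, on opposite sides), so that the collection of $\mathcal{R}_a$ genuinely glues into a single pseudo-natural transformation rather than merely a family of unrelated isomorphisms. Once the $\cala$-valued mixed-product square is checked, the $\overline{\dualcat}$-valued one follows by the symmetry of the Morita context, and the pure-action squares are strictly simpler.
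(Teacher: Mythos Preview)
Your approach has a genuine gap at the very first step: you take the components of $\mathcal{R}$ at the two objects of $\Mor$ to be the identity $1$-morphisms. This is not forced, and in fact it fails. A pseudo-natural transformation $\eta\colon U\Rightarrow V$ assigns to each object $x$ an \emph{invertible} $1$-morphism $\eta_x\colon U(x)\to V(x)$; even when $U$ and $V$ agree on objects (as $\id_\Mor$ and $\Se^2$ do here), $\eta_x$ is only required to be an invertible endo-$1$-morphism, not the identity. With your choice $\mathcal{R}_{\mathbf 0}=\un$, the naturality $2$-cell at $a\iN\cala$ would have to be an isomorphism $a\Rarr\cong a^{\vee\vee\vee\vee}$, and no such natural isomorphism exists for a general finite tensor category. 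Your parenthetical remark that ``the distinguished invertible objects must actually be trivial here'' is exactly backwards: the fact that $\Se$ is the identity on \emph{objects} says nothing about the object-components of a pseudo-natural transformation into $\Se^2$; the extra condition $\eta_x=\id_x$ is what distinguishes a pivotal structure from a mere trivialization (compare Remark~\ref{quasi_pivotal}).

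The paper's proof takes as object-components the distinguished invertible objects themselves, $\mathcal{R}_{\mathbf 0}=\DD_\cala$ and $\mathcal{R}_{\mathbf 1}=\DD_{\dualcat}$, so that the naturality $2$-cell at $a\iN\cala$ is precisely the Radford isomorphism $\DD_\cala\ot a\Rarr\cong a^{\vee\vee\vee\vee}\ot\DD_\cala$ of \eqref{Radford}, at $m\iN\calm$ it is $\DD_\cala\act m\Rarr\cong\Se^2_\calm(m)\actr\DD_{\dualcat}$ from Theorem~\ref{thm:Radford}, and so on. The conjugation by $\DD$ is not ``absorbed'' by the naturality cells; it is carried by the object-components. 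Once you make this correction, the rest of your outline --- reducing the coherence squares to the compatibility of Radford isomorphisms with the various compositions, and using the twisted structures from \eqref{Nakayama_Serre} and \eqref{lSerre_module_functor} --- matches the paper's argument.
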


\begin{proof}
To construct the pseudo-natural equivalence, consider the following data:
\begin{enumerate}[(i)]
 \item 
For the objects $0$ and $1$ of $\Mor$ the distinguished invertible $1$-morphisms
  \be
  \mathcal{R}_0 := \DD_\Cala \qquad \text{and}\qquad \mathcal{R}_1 := \DD_{\dualcat} \,.
  \ee
 \item 
For $1$-morphisms in $\Mor$, the following invertible $2$-morphisms:
\end{enumerate}
\begin{itemize}
 \item[--]
For $a\iN\cala$ and $F\iN\overline{\dualcat}$, the natural isomorphisms
  \be
  \mathcal{R}_a \Colon \DD_\Cala \,{\otimes}\, a
  \xcong a^{\vee\vee\vee\vee} {\otimes}\, \DD_\cala
  \ee
and
  \be
  \mathcal{R}_m \Colon \DD_\Cala^{}\act m
  \xcong \Se^\cala_\calm \,{\circ}\, \Se^\cala_\calm(m) \actr\DD_\dualcat
  \ee
coming from \eqref{Radford} and from \eqref{Radford_mod}, respectively.
 \item[--]
For $F\iN\overline{\dualcat}$, the natural isomorphism
  \be
  \mathcal{R}_F\Colon \DD_\dualcat {\circ\opp F}
  \xRightarrow{~\cong~} F\rrrra \,{\circ\opp } \DD_\dualcat
  \ee
given by the composite
  \be
  F \,{\circ}\, \DD_\dualcat\cong F \,{\circ}\, \DD_\Cala^{}\act(\lSe^\cala_\calm)^2
  \cong \DD_\Cala^{}\act(\lSe^\cala_\calm)^2 \,{\circ}\, F\rrrra
  \cong \DD_\dualcat {\circ}\, F\rrrra ,
  \ee
where the first and last isomorphisms come from \eqref{eq:distinguished_dualcat} 
and the middle isomorphism uses the module structure of $F$ and the twisted structure 
\eqref{lSerre_module_functor} of the relative Serre functor twice.
 \item[--]
Analogously, for any $H\iN\FunM$ a natural isomorphism
  \be
  \mathcal{R}_H\Colon \DD_\dualcat\actF H \xRightarrow{~\cong~}H\rrrra\actrF\DD_\cala
  \ee
given by the composite
  \be
  H\,{\circ}\, \DD_\dualcat \cong H \,{\circ}\, \DD_\cala\act(\lSe^\cala_\calm)^2
  \cong \DD_\Cala^{} \,{\otimes}\, {}^{\vee\vee\vee\vee\!}(-)\circ H\rrrra
  \cong ({-} \,{\otimes}\, \DD_\Cala)^{} \,{\circ}\, H\rrrra ,
  \ee
where again we first use \eqref{eq:distinguished_dualcat} and then the 
twisted structure \eqref{lSerre_module_functor} of the relative Serre functor
(also taking into account that $\lSe_\cala^\cala \,{\cong}\, \ldd(-)$), and where
the last isomorphism is the Radford isomorphism of $\cala$.
\end{itemize}

\noindent
The claim now reduces to making the routine check of the commutativity of the diagram
    \be
    \begin{tikzcd}[column sep=huge]
    \DD\circ s\circ t\ar[r,"\mathcal{R}_{s\circ t}"]\ar[d,"\mathcal{R}_s\circ\id",swap]&(s\circ t)^{\vee\vee\vee\vee}\circ\DD\ar[d,"\eqref{rSerre_compositor}"]\\
    s^{\vee\vee\vee\vee}\circ\DD\circ  t\ar[r,"\id\circ\mathcal{R}_t",swap]&s^{\vee\vee\vee\vee}\circ t^{\vee\vee\vee\vee}\circ\DD
    \end{tikzcd}
    \ee
where the symbol $\circ$ denotes the horizontal composition in the bicategory $\Mor$ 
and $s$ and $t$ are composable $1$-morphisms. 
\end{proof}

The bicategorical formulation in Theorem \ref{Thm_Radford_pseudo} unifies Radford's
theorems for tensor and module categories. It also offers a natural home to the
invertible objects in Radford-type theorems, since they become part of the
data of a pseudo-natural equivalence.

\begin{rem}
Theorem \ref{Thm_Radford_pseudo} can be extended to the bicategory
$\textbf{Mod}^{\text{ex}}(\cala)$ of exact module categories over a finite tensor 
category $\cala$. $\textbf{Mod}^{\text{ex}}(\cala)$ is a bicategory with dualities
for $1$-mor\-phisms, given by $H^* \,{:=}\, H\la$ and ${}^{*\!}H := H\ra$ for every module functor
$H\iN\Fun_\cala(\calm,\caln)$. The isomorphisms coming from 
\eqref{eq:distinguished_dualcat} and \eqref{lSerre_module_functor} induce a natural isomorphism
  \be
  \mathcal{R}_H\Colon \DD_{{\cala^*_\caln}}\actF H \xRightarrow{~\cong~}H\lllla\actrF\DD_\dualcat\,,
  \ee
for each such module functor, and these together assemble into a pseudo-natural equivalence 
$\mathcal{R} \colon \id_{\textbf{Mod}^{\text{ex}}(\cala)}
    $\linebreak[0]$ 
{\xRightarrow{~\cong~\,}}\, (-)^{****}$.
\end{rem}


\section{On pivotality and Morita theory}\label{sec:piv}

\subsection{Pivotal module categories}

An additional structure that a tensor category $\cala$ can carry is a 
\emph{pivotal structure}, that is, a monoidal natural isomorphism 
$p\colon \id_\cala\Rarr{\sim}(-)\dd$ between the identity functor and 
the double-du\-al functor. The monoidal opposite $\overline{\cala\,}$ of a pivotal 
tensor category is endowed with a canonical pivotal structure, given by 
  \be \label{opp_pivotal}
  \overline{p}_{\overline{a}} := p^{-1}_{\ldd a}
  \Colon \overline{a} \xcong \overline{{}\dd a} = \overline{a}\dd .
  \ee
A pivotal structure on a module category over a pivotal tensor category can be
defined as follows:

\begin{defi}\label{def:pivmodule}
Let $\cala$ and $\calb$ be pivotal finite tensor categories.
\Enumeratei
    \item 
(\Cite{Def.\,5.2}{schaum2} and \Cite{Def.\,3.11}{Sh})
A \emph{pivotal structure} on an exact left $\cala$-module category $\calm$ is 
a natural isomorphism $\widetilde{p} \colon \id_\calm{\xRightarrow{~\cong\,}}\Se_\calm^\cala$ 
such that the diagram
  \be\label{condition_pivotal_module}
  \begin{tikzcd}[row sep=2.3em, column sep=1.6em]
  a\act m \ar[rr,"\widetilde{p}_{a\Act m}"] \ar[dr,"p_a\act \widetilde{p}_m\!\!\!",swap]
  &~& \Se_\calm^\cala(a\:\Act m) \ar[dl,"\!\eqref{Serre_twisted}"]
  \\
  ~& a\dd\act \Se_\calm^\cala(m) &~
  \end{tikzcd}
  \ee
commutes for all $a\iN\cala$ and $m\iN\calm$. A module category together with 
a module structure is said to be a \emph{pivotal module category}. 
    \item 
An exact right $\calb$-module category $\caln$ is said to be \emph{pivotal} if 
the left module category ${}_{\overline{\calb}}\caln$ has a pivotal structure. 
    \item 
A \emph{pivotal bimodule category} is an exact bimodule ${}_\cala\calm_\calb$ 
together with the structure $\widetilde{p}\colon \id_\calm{\xRightarrow{~\cong\,}}\Se_\calm^\cala$
of a pivotal $\cala$-module and the structure 
$\widetilde{q}\colon \id_\calm{\xRightarrow{~\cong\,}}\Se_\calm^{\overline{\calb}}$
of a pivotal $\calb$-module, such that the diagrams
  \be
  \label{pivotal_bimod1}
  \begin{tikzcd}[row sep=2.3em, column sep=1.6em]
  m\actr b \ar[rr,"\widetilde{p}_{m\Actr b}"] \ar[dr,"\widetilde{p}_m\Actr q_{b}\!\!\!",swap]
  &~& \Se_\calm^\cala(m\Actr\: b) \ar[dl,"\!\eqref{Serre_twisted}"]
  \\
  ~& \Se_\calm^\cala(m)\actr b\dd &~
  \end{tikzcd}
  \ee
and 
  \be
  \label{pivotal_bimod2}
  \begin{tikzcd}[row sep=2.3em, column sep=1.6em]
  a\act m \ar[rr,"\widetilde{q}_{a\Act m}"]
  \ar[dr,"p^{-1}_{\ldd a}\Act \widetilde{q}_m\!\!\!",swap]
  &~& \Se_\calm^{\overline{\calb}}(a\:\Act m) \ar[dl,"\!\eqref{Serre_twisted_op2}"]
  \\
  ~& \ldd a\act \Se_\calm^{\overline{\calb}}(m) &~
  \end{tikzcd}
  \ee
commute for all $a\iN\cala$, $b\iN\calb$ and $m\iN\calm$.
\end{enumerate}
\end{defi}

The dual tensor category of a pivotal module inherits 
the structure of a pivotal finite multi-ten\-sor category:

\begin{prop} \label{pivotal_dual_category}
Let $\cala$ be a pivotal tensor category and $\calm$ a pivotal $\cala$-module category. Then
\Enumeratei
    \item
{\rm\Cite{Thm.\,3.13}{Sh}}
The dual tensor category $\dualcat$ has a pivotal structure given by the composite
  \be
  \label{pivotal_dualcat}
  q_F\Colon F \xRightarrow{~\id\,\circ\,\widetilde{p}~} F \cir \Se_\calm^\cala
  \xRightarrow{\;\eqref{Serre_module_functor}~\,} \Se_\calm^\cala\cir F\lla
  \xRightarrow{~\widetilde{p}^{-1}\circ\,\id~\,} F\lla
  \ee
for a module endofunctor $F\iN\dualcat$, with $\widetilde{p}$ the pivotal structure
of $\calm$.
{\Color
\item Given a pivotal bimodule category ${}_\cala\calm_\calb$, the assignment
\be
\calb\longrightarrow \overline{\dualcat},\qquad b\longmapsto -\actr b
\ee
is a pivotal tensor functor.
}
\end{enumerate}
\end{prop}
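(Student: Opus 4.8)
The plan is to read the statement off Proposition~\ref{distinguished_dualcat}, which already packages the distinguished invertible object of $\dualcat$ in exactly the form needed. Recall that $\dualcat$ is unimodular precisely when its distinguished invertible object $\DD_{\!\dualcat}$ is isomorphic, as an object of $\dualcat=\Funre_\cala(\calm,\calm)$, to the monoidal unit $\id_\calm$. By Proposition~\ref{distinguished_dualcat} there is an isomorphism $\DD_{\!\dualcat}\cong\DD_{\!\cala}\act(\lSe_\calm^\cala)^2$ of $\cala$-module endofunctors of $\calm$, so it suffices to trivialize the two factors on the right-hand side separately.

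First I would use the extra hypothesis: $\cala$ unimodular means $\DD_{\!\cala}\cong\un$, and hence the module endofunctor $\DD_{\!\cala}\act-$ is isomorphic to $\id_\calm$. This reduces the claim to showing $(\lSe_\calm^\cala)^2\cong\id_\calm$. For that I would invoke the pivotal structure on $\calm$: by Definition~\ref{def:pivmodule}(i) there is a natural isomorphism $\widetilde p\colon\id_\calm\xRightarrow{\,\cong\,}\Se_\calm^\cala$ compatible with the module structures, so $\Se_\calm^\cala\cong\id_\calm$ as an object of $\dualcat$; since $\lSe_\calm^\cala$ is a quasi-inverse of $\Se_\calm^\cala$ (cf.\ the discussion around \eqref{eq:relSerre4M}), also $\lSe_\calm^\cala\cong\id_\calm$, and therefore $(\lSe_\calm^\cala)^2\cong\id_\calm$. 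Combining the two trivializations gives $\DD_{\!\dualcat}\cong\id_\calm$, i.e.\ $\dualcat$ is unimodular. Equivalently, one may use the second expression $\DD_{\!\dualcat}\cong\mathbb{N}_\calm^l\circ\lSe_\calm^\cala$ in \eqref{eq:distinguished_dualcat} together with $\mathbb{N}_\calm^l\cong\DD_{\!\cala}\act\lSe_\calm^\cala$ from \eqref{Nakayama_Serre}, which for unimodular $\cala$ again yields $\DD_{\!\dualcat}\cong(\lSe_\calm^\cala)^2$.

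I do not anticipate any real obstacle; the argument is essentially a short piece of bookkeeping on top of Proposition~\ref{distinguished_dualcat}. The only point needing a line of care is that the isomorphisms involved should be read in the category $\dualcat$ of honest $\cala$-module endofunctors rather than merely between underlying linear functors --- which is harmless here since $\cala$ is pivotal, so the double-dual twist carried by the relative Serre functors is trivialized and they may be treated as genuine objects of $\dualcat$ --- and, relatedly, that the quasi-inverse relation between $\Se_\calm^\cala$ and $\lSe_\calm^\cala$ is one of module functors, as recorded in the Background section.
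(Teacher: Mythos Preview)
Your argument for part (ii) is correct and is essentially the paper's own proof: the paper likewise invokes Proposition~\ref{distinguished_dualcat} and observes that a trivialization of $\DD_{\!\cala}$ (from unimodularity) together with the pivotal structure $\widetilde p$ of $\calm$ (which trivializes the relative Serre functor) yields $\DD_{\!\dualcat}\cong\id_\calm$. Your care about reading the isomorphisms in $\dualcat$ rather than among bare functors is exactly the right caveat.

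You do not address part (i). This is mostly harmless, since the statement already attributes it to \cite[Thm.\,3.13]{Sh}; but note that the paper does spend two sentences on it: the cited result is formulated for $\overline{\dualcat}$ with the pivotal structure \eqref{pivotal_dualcat1}, and one has to pass through the opposite-pivotal convention \eqref{opp_pivotal} to arrive at the precise formula \eqref{pivotal_dualcat} for $\dualcat$ as stated. A single line to that effect would complete your proposal.
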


\begin{proof}
The statement in \Cite{Thm.\,3.13}{Sh} concerns $\overline{\dualcat}$, the monoidal 
opposite of the dual tensor category, with its pivotal structure described by
  \be
  \label{pivotal_dualcat1}
  \overline{q}_F\Colon F \xRightarrow{\;\widetilde{p}\,\circ\,\id~\,}\Se_\calm^\cala\cir F
  \xRightarrow{\;\eqref{Serre_module_functor}~\,} F\rra\cir \Se_\calm^\cala
  \xRightarrow{\;\id\,\circ\,\widetilde{p}^{-1}~} F\rra .
  \ee
Considering the opposite pivotal structure \eqref{opp_pivotal} on $\dualcat$ 
we obtain \eqref{pivotal_dualcat}. {\Color
Assertion (ii) states that the diagram
\be
\begin{tikzcd}[column sep=3.1em]
    m\actr b \ar[r,"\widetilde{p}_{m\Actr b}"]\ar[d,swap,"\id\actr q_b"]& \Se_\calm^\cala(m\actr b)\ar[d,"\eqref{Serre_module_functor}"]\\
    m\actr b\dd \ar[r,swap,"\widetilde{p}_m\actr \id"]& \Se_\calm^\cala(m)\actr b\dd
\end{tikzcd}
\ee
commutes for every $m\iN\calm$ and $b\iN\calb$. This diagram is nothing but the condition \eqref{pivotal_bimod1}.
}
\end{proof}


\subsection{Pivotality of the category of module functors}
{\Color
Let $\cala$ be a finite tensor category and $\calm$ and $\caln$ exact $\cala$-module 
categories. Composition of module functors turns the category $\Fun_\cala(\calm,\caln)$ of module functors into an $({\cala^*_\caln},\dualcat)$-bimodule category or, equivalently, an $(\,\overline{\dualcat}, \overline{\cala^*_\caln}\,)$-bimodule category
with action given by 
  \be\label{Fun_actions}
  F_1\actF H \actrF F_2:= F_2\circ H\cir F_1 \qquad\text{for}\quad F_1\iN~\overline{\dualcat}\,,\quad F_2\iN\overline{\cala^*_\caln}
  \quad\text{and}\quad H\iN \Fun_\cala(\calm,\caln) \,.
  \ee
More generally, if $\calm$ and $\caln$ are bimodules, then $\Fun_\Cala(\calm,\caln)$
becomes a bimodule category with actions given by \eqref{left_module_functors}.

\begin{lem}\label{Serre_category_module_functors}
Let $\calm$ and $\caln$ be exact module categories over a finite tensor category $\cala$.
\Enumeratei
    \item
The relative Serre functors of the module category
$\,{}_{\overline{\dualcat}\,} \Fun_\cala(\calm,\caln)\,$ are given by
  \be
  \Se^{\overline{\dualcat}}_{\Fun_\Cala(\calm,\caln)}(H) \cong H\rra
  \qquad\text{and}\qquad
  \lSe^{\overline{\dualcat}}_{\Fun_\Cala(\calm,\caln)}(H) \cong H\lla
  \label{eq:SA*Fun=Hrra/Hlla}
  \ee
for $H\iN\Fun_\cala(\calm,\caln)$.
    \item
The Nakayama functors of $\Fun_\cala(\calm,\caln)$ are given by
  \be
  \label{Nakayama_Funr}
  \mathbb{N}_{\Fun_\Cala(\calm,\caln)}^r(H) \cong \mathbb{N}_\caln^r\cir H\cir \Se_\calm^\cala
  \cong \Se_\caln^\cala \cir H \cir \mathbb{N}_\Calm^r
  \ee
and
  \be
  \label{Nakayama_Fun}
  \mathbb{N}_{\Fun_\Cala(\calm,\caln)}^l(H) \cong \mathbb{N}_\caln^l\cir H\cir \lSe_\calm^\cala
  \cong \lSe_\caln^\cala \cir H \cir \mathbb{N}_\Calm^l
  \ee
for $H\iN\Fun_\Cala(\calm,\caln)$.
    \item
For exact bimodules ${}_\cala\calm_\calb$ and ${}_\cala\caln_\calc$, the relative
Serre functors of the bimodule category $\,{}_{\calb}\Fun_\Cala(\calm,\caln)_{\,\calc}$ are given by
  \be
  \bearll
  \Se^{\overline{\calc}}_{\Fun_\Cala(\calm,\caln)}(H)
  \cong \Se_\caln^{\overline{\calc}} \cir H \cir \Se^\cala_\calm \,, \qquad &
  \lSe^{\overline{\calc}}_{\Fun_\Cala(\calm,\caln)}(H)
  \cong \lSe_\caln^{\overline{\calc}} \cir H \cir \lSe^\cala_\calm \,,
  \Nxl6
  \lSe^{\calb}_{\Fun_\Cala(\calm,\caln)}(H)
  \cong \lSe^\cala_\caln\cir H\cir \lSe^{\overline{\calb}}_\calm \,, &
  \Se^{\calb}_{\Fun_\Cala(\calm,\caln)}(H)
  \cong \Se^\cala_\caln\cir H\cir \Se^{\overline{\calb}}_\calm
  \eear
  \label{eq:Serre_bimodFun}
  \ee
for $H\iN\Fun_\Cala(\calm,\caln)$.
\end{enumerate}
\end{lem}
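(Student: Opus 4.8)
The plan is to prove (i) directly from the definition of a relative Serre functor, to derive (ii) from (i) via the relation \eqref{Nakayama_Serre} between Nakayama and relative Serre functors, and to obtain (iii) from (ii) by changing the tensor category that acts on $\Fun_\cala(\calm,\caln)$ and absorbing the distinguished invertible objects that thereby appear. For (i), I first identify the internal Hom of the $\overline{\dualcat}$-module $\Fun_\cala(\calm,\caln)$. As the action is $F\actF H=H\cir F$, whiskering the adjunction $H_1\dashv H_1\ra$ of $\cala$-module functors --- whose unit and counit are module natural transformations by \Cite{Cor.\,2.13}{DSPS2}, so that it descends to the module functor categories --- yields $\iHom^{\overline{\dualcat}}(H_1,H_2)\cong H_1\ra\cir H_2$, and symmetrically $\icoHom^{\overline{\dualcat}}(H_1,H_2)\cong H_1\la\cir H_2$. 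Since the right dual of a module endofunctor in $\overline{\dualcat}$ is its right adjoint (left duals in $\overline{\dualcat}$ being left adjoints, as used before Lemma \ref{adjoints_ihom}), one obtains
$$
\iHom^{\overline{\dualcat}}(H_1,H_2)^{\vee}\cong(H_1\ra\cir H_2)\ra\cong H_2\ra\cir H_1\rra\cong\iHom^{\overline{\dualcat}}(H_2,H_1\rra)
$$
naturally in $H_2$, so by the Yoneda lemma for internal Homs $\Se^{\overline{\dualcat}}_{\Fun_\cala(\calm,\caln)}(H)\cong H\rra$; the $\icoHom$-variant (or passage to the quasi-inverse) gives $\lSe^{\overline{\dualcat}}_{\Fun_\cala(\calm,\caln)}(H)\cong H\lla$. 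One may also simply read this off from \eqref{eq:relSerre4M}.

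For (ii), the Nakayama functor $\mathbb{N}^r_{\Fun_\cala(\calm,\caln)}$ depends only on the underlying linear category, so I compute it through the $\overline{\dualcat}$-module structure: by \eqref{Nakayama_Serre} it equals $\DD_{\overline{\dualcat}}^{-1}\actF\Se^{\overline{\dualcat}}_{\Fun_\cala(\calm,\caln)}$, that is, $\mathbb{N}^r_{\Fun_\cala(\calm,\caln)}(H)\cong H\rra\cir\DD_{\overline{\dualcat}}^{-1}$ by part (i). Since the distinguished invertible object of a monoidal opposite agrees with that of the original category (both being the value at the tensor unit of the intrinsic left Nakayama functor), one has $\DD_{\overline{\dualcat}}\cong\DD_{\dualcat}$, and Proposition \ref{distinguished_dualcat} together with \eqref{eq:distinguished_dualcat} expresses $\DD_{\overline{\dualcat}}^{-1}$ through $\Se_\calm^\cala$ and $\DD_\cala$. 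Substituting this, inserting $H\rra\cong\Se^\cala_\caln\cir H\cir\lSe_\calm^\cala$ from \eqref{Serre_module_functor}, and using that $\DD_\cala$ --- being invertible --- satisfies $\DD_\cala\dd\cong\DD_\cala$, that $H$ is an $\cala$-module functor, and the twisted bimodule structure \eqref{Serre_twisted} of the relative Serre functors to slide $\DD_\cala$ past the Serre functors, the expression collapses to $\mathbb{N}^r_\caln\cir H\cir\Se_\calm^\cala$ and, sliding $\DD_\cala$ to the other side, to $\Se^\cala_\caln\cir H\cir\mathbb{N}^r_\calm$. The formulas for $\mathbb{N}^l_{\Fun_\cala(\calm,\caln)}$ follow identically with left duals, or by taking left adjoints.

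For (iii), both the right $\calc$-action $H\actrF c=(-\actr c)\cir H$, which propagates the structure of $\caln$, and the left $\calb$-action $b\actF H=H\cir(-\actr b)$, which propagates that of $\calm$, fall under the same scheme: apply \eqref{Nakayama_Serre} --- now over $\overline{\calc}$, respectively over $\calb$ --- to the formula for $\mathbb{N}^r_{\Fun_\cala(\calm,\caln)}$ from (ii). On the $\calc$-side this is immediate, $\Se^{\overline{\calc}}_{\Fun_\cala(\calm,\caln)}(H)\cong\big((-\actr\DD_{\overline{\calc}})\cir\mathbb{N}^r_\caln\big)\cir H\cir\Se_\calm^\cala\cong\Se^{\overline{\calc}}_\caln\cir H\cir\Se_\calm^\cala$, the last isomorphism being \eqref{Nakayama_Serre} for $\caln$ over $\overline{\calc}$; on the $\calb$-side one absorbs $\DD_\calb$ through $\Se_\calm^\cala$ by \eqref{Serre_twisted} (using $\DD_\calb\dd\cong\DD_\calb$ and $\DD_\calb\cong\DD_{\overline{\calb}}$) and then re-expresses the outcome via \eqref{Serre_twisted_op}, \eqref{Serre_twisted_op2} and \eqref{Nakayama_Serre}, reaching $\Se^\cala_\caln\cir H\cir\Se^{\overline{\calb}}_\calm$. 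Finally $\lSe^{\overline{\calc}}_\caln\cir(-)\cir\lSe_\calm^\cala$ and $\lSe^\cala_\caln\cir(-)\cir\lSe^{\overline{\calb}}_\calm$ are at once seen to be quasi-inverse to the right relative Serre functors just computed, and as the left and right relative Serre functors of an exact module category are mutually quasi-inverse these must be $\lSe^{\overline{\calc}}_{\Fun_\cala(\calm,\caln)}$ and $\lSe^\calb_{\Fun_\cala(\calm,\caln)}$. I expect the one genuine difficulty to be bookkeeping: tracking which of the left or right double dual, and which of $\DD$ for a category or for its monoidal opposite, occurs at each stage --- something entirely governed by the twisted (bi)module functor structures \eqref{Serre_twisted}--\eqref{Serre_twisted_op2}, by Corollary \ref{Serre_functors_bimodules}, and by the fact that an invertible object is canonically isomorphic to its double dual.
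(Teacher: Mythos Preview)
Your proposal is correct and follows essentially the same route as the paper: (i) is obtained by identifying the internal Hom of the $\overline{\dualcat}$-module $\Fun_\cala(\calm,\caln)$ as $H_1\ra\cir H_2$ and reading off the Serre functor, (ii) by combining (i) with \eqref{Nakayama_Serre} and the explicit form \eqref{eq:distinguished_dualcat} of $\DD_{\dualcat}$, and (iii) by applying \eqref{Nakayama_Serre} once more with respect to $\calb$ and $\overline{\calc}$. The only notable difference is in (ii): the paper substitutes $\DD_{\dualcat}\cong\mathbb{N}^l_\calm\cir\lSe^\cala_\calm$ and then invokes the twisted structure of the Nakayama functor (the two-category version of \eqref{Nakayama_twisted_functor}, namely $H\lla\cir\mathbb{N}^l_\calm\cong\mathbb{N}^l_\caln\cir H$) in a single step, whereas you first rewrite $H\rra$ via \eqref{Serre_module_functor} and then cancel Serre factors --- a slightly longer but equivalent manipulation.
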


\begin{proof}
\Enumeratei
    \item 
There is a bijection analogous to \eqref{ad_v} for module functors in 
$\Fun_\Cala(\calm,\caln)$. The statement thus follows in complete analogy to the 
computation for $\Fun_\Cala(\calm,\cala)$ in Proposition \ref{double_duals}{\Color , once one takes into account that $\Fun_\Cala(\calm,\caln)$ is exact over $\Dualcat$ \Cite{Prop.\ 7.12.14}{EGno}.}
    \item
We show \eqref{Nakayama_Fun}; the isomorphisms \eqref{Nakayama_Funr} follow in the same manner.
According to Proposition \ref{distinguished_dualcat} the distinguished object of the 
dual tensor category is $\DD_\Dualcat \,{\cong}\, \mathbb{N}_\calm^l\cir\lSe_\calm^\cala$.
Thus
  \be
  \mathbb{N}_{\Fun_\Cala(\calm,\caln)}^l(H)
  \cong \DD_\Dualcat \!\actF\lSe^{\overline{\dualcat}}_{\Fun_\Cala(\calm,\caln)}(H)
  \cong H\lla\cir \mathbb{N}_\calm^l\cir \lSe_\calm^\cala
  \cong \mathbb{N}_\caln^l\cir H\cir \lSe_\calm^\cala \,,
  \ee
where the second isomorphism uses the second isomorphism in \eqref{eq:SA*Fun=Hrra/Hlla}
and the last isomorphism is the twisted module structure of the Nakayama functor. 
Moreover, a factor of $\DD_\Cala$ can be juggled between the Nakayama functor of 
$\caln$ and the relative Serre functor of $\calm$ using the module structure of $H$,
leading to the second isomorphism in \eqref{Nakayama_Fun}.
    \item
{\Color According to \Cite{Prop.\ 4.15}{schaum2}, the inner-homs of $\Fun_\Cala(\calm,\caln)$ with respect to $\calb$ and $\calc$ are described in terms of those of ${}_\cala\calm_\calb$ and ${}_\cala\caln_\calc$ and their respective module actions; this ensures exactness of $\,{}_{\calb}\Fun_\Cala(\calm,\caln)_{\calc}$.}
All four isomorphisms {\Color in \eqref{eq:Serre_bimodFun} follow then} from 
the relation \eqref{Nakayama_Serre} between relative Serre and Naka\-ya\-ma functors. 
For instance, the last isomorphism in \eqref{eq:Serre_bimodFun} is given by the composite
  \be
  \Se^{\calb}_{\Fun_\Cala(\calm,\caln)}(H)
  \cong \DD_\calb^{} \actF\mathbb{N}_{\Fun_\Cala(\calm,\caln)}^r(H)
  \cong \Se^\cala_\calm \cir H \cir \mathbb{N}^r_\calm(-)\actr\DD_\calb
  \cong \Se^\cala_\calm \cir H \cir \Se^{\overline{\calb}}_\calm \,,
  \ee
where the second isomorphism comes from \eqref{Nakayama_Funr} and the last one is 
an instance of \eqref{Nakayama_Serre}.
\end{enumerate}    
\end{proof}

\begin{prop}\label{pivotality_category_functors}
Let $\cala$, $\calb$ and $\calc$ be pivotal finite tensor categories, and consider
exact bimodules ${}_\cala\calm_\calb$ and ${}_\cala\caln_\calc$.
\Enumeratei
    \item 
Suppose that ${}_\cala\calm$ and $\caln_\calc$ are pivotal modules. 
Then $\Fun_\Cala(\calm,\caln)$ has the structure of a pivotal $\calc$-module category.    
    \item 
Suppose that ${}_\cala\caln$ and $\calm_\calb$ are pivotal modules. 
Then $\Fun_\Cala(\calm,\caln)$ has the structure of a pivotal $\calb$-module category.
    \item 
If ${}_\cala\calm_\calb$ and ${}_\cala\caln_\calc$ are pivotal bimodules, then 
$\Fun_\cala(\calm,\caln)$ has the structure of a pivotal $(\calb,\calc)$-bimodule category.
\end{enumerate}
\end{prop}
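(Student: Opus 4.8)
The plan is to produce the required pivotal structures on $\Fun_\cala(\calm,\caln)$ by transporting the given pivotal structures of $\calm$ and $\caln$ along the isomorphisms computed in Lemma \ref{Serre_category_module_functors}(iii). For part (i), view $\Fun_\cala(\calm,\caln)$ as a left $\overline{\calc}$-module via $\overline{c}\act H=(-\actr c)\cir H$ and recall that $\Se^{\overline{\calc}}_{\Fun_\cala(\calm,\caln)}(H)\cong\Se_\caln^{\overline{\calc}}\cir H\cir\Se_\calm^\cala$. Writing $\widetilde{p}\colon\id_\calm\Rightarrow\Se_\calm^\cala$ for the pivotal structure of ${}_\cala\calm$ and $\widetilde{q}\colon\id_\caln\Rightarrow\Se_\caln^{\overline{\calc}}$ for that of $\caln_\calc$, set
\[
  \widetilde{P}_H\colon\; H=\id_\caln\cir H\cir\id_\calm
  \;\xRightarrow{~\widetilde{q}\,\circ\,\id_H\,\circ\,\widetilde{p}~}\;
  \Se_\caln^{\overline{\calc}}\cir H\cir\Se_\calm^\cala
  \;\xRightarrow{~\cong~}\; \Se^{\overline{\calc}}_{\Fun_\cala(\calm,\caln)}(H) ,
\]
the last arrow being the inverse of the isomorphism of Lemma \ref{Serre_category_module_functors}(iii); being a whiskering composite of natural isomorphisms, $\widetilde{P}$ is a natural isomorphism. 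Part (ii) is completely parallel, now using $\Se^\calb_{\Fun_\cala(\calm,\caln)}(H)\cong\Se_\caln^\cala\cir H\cir\Se_\calm^{\overline{\calb}}$, the pivotal structures of ${}_\cala\caln$ and of $\calm_\calb$, and the left $\calb$-action $b\actF H=H\cir(-\actr b)$ on $\Fun_\cala(\calm,\caln)$.

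The substance of (i) is then to verify that $\widetilde{P}$ satisfies the pivotality axiom \eqref{condition_pivotal_module} for the $\overline{\calc}$-action. The decisive point is that $\overline{\calc}$ acts on $\Fun_\cala(\calm,\caln)$ only by post-composition, hence through the $\caln$-slot, so that under the isomorphism of Lemma \ref{Serre_category_module_functors}(iii) --- an isomorphism of \emph{twisted} bimodule functors --- the twisted $\overline{\calc}$-module functor structure of $\Se^{\overline{\calc}}_{\Fun_\cala(\calm,\caln)}$ is carried entirely by the factor $\Se_\caln^{\overline{\calc}}$ through its twisted structure \eqref{Serre_twisted_op}. Consequently the pentagon \eqref{condition_pivotal_module} for $\Fun_\cala(\calm,\caln)$ becomes a whiskering, by the inert factors $H$ and $\Se_\calm^\cala$, of the corresponding pentagon, and reduces to two inputs: the pentagon \eqref{condition_pivotal_module} for the pivotal module $\caln_\calc$, and the compatibility of the isomorphism $\Lambda$ of Proposition \ref{Serre_and_module_functors} with module functor structures (the square \eqref{eq:Lambda-compat} together with \eqref{Serre_compatibility_composition}), which enters because the isomorphisms of Lemma \ref{Serre_category_module_functors} are assembled from the $\Lambda$'s via \eqref{Nakayama_Serre}. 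The same reduction with the slots exchanged proves (ii).

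For part (iii) I would first invoke (i) and (ii) --- fed with the four pivotal structures packaged in the pivotal bimodules ${}_\cala\calm_\calb$ and ${}_\cala\caln_\calc$ --- to obtain a pivotal $\calc$-module structure $\widetilde{P}$ (built from $\Se_\calm^\cala$ and $\Se_\caln^{\overline{\calc}}$) and a pivotal $\calb$-module structure $\widetilde{Q}$ (built from $\Se_\calm^{\overline{\calb}}$ and $\Se_\caln^\cala$) on $\Fun_\cala(\calm,\caln)$. It then remains to check the bimodule compatibility diagrams \eqref{pivotal_bimod1} and \eqref{pivotal_bimod2} for $\widetilde{P}$ and $\widetilde{Q}$. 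Since the $\calb$- and $\calc$-actions on $\Fun_\cala(\calm,\caln)$ operate on disjoint slots ($\calm$ by pre-composition, $\caln$ by post-composition), and since $\widetilde{Q}$ meets the $\calc$-action only through its $\Se_\caln^\cala$-factor while $\widetilde{P}$ meets the $\calb$-action only through its $\Se_\calm^\cala$-factor, these diagrams collapse to whiskerings of diagram \eqref{pivotal_bimod1} of the pivotal bimodule $\caln$, respectively of $\calm$, which hold by hypothesis --- again glued by the coherence of the $\Lambda$'s.

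I expect the only genuine difficulty to be bookkeeping: unwinding Lemma \ref{Serre_category_module_functors} into its constituent isomorphisms $\Lambda_F$ and the twisted structures of the Nakayama and relative Serre functors, and then organizing the ensuing diagram chase so that the coherence conditions for $\Fun_\cala(\calm,\caln)$ visibly decompose into those for $\calm$ and $\caln$. No ingredient beyond Proposition \ref{Serre_and_module_functors} and Lemma \ref{Serre_category_module_functors} is needed; the work is entirely in keeping track of which twist lives on which slot.
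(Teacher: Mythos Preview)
Your approach is essentially the paper's: the pivotal structure on $\Fun_\cala(\calm,\caln)$ is defined exactly as you propose via $\widetilde q\circ\id_H\circ\widetilde p$ composed with the isomorphism of Lemma~\ref{Serre_category_module_functors}(iii), and the verification of \eqref{condition_pivotal_module} does reduce to the pivotality condition on $\caln_\calc$ in the $\caln$-slot. The one simplification the paper exploits is that your second anticipated ingredient---unpacking the isomorphism of Lemma~\ref{Serre_category_module_functors} in terms of $\Lambda$'s and Nakayama functors---is unnecessary: once the twisted $\overline{\calc}$-structure of $\Se^{\overline{\calc}}_{\Fun_\cala(\calm,\caln)}$ is identified with that of $\Se_\caln^{\overline{\calc}}$, the remaining square commutes by plain naturality of the twisted structure \eqref{Serre_twisted_op} with respect to the morphism $H(\widetilde p_m)$, so the diagram chase is shorter than you expect.
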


\begin{proof}
To show (i) denote by
$\widehat{q}\colon \id_\caln \,{\xRightarrow{\;\simeq\;}}\, \Se_\caln^{\overline{\calc}}$
and by $\widetilde{p}\colon \id_\calm \,{\xRightarrow{\;\sim\;}}\,\Se_\calm^{\cala}$ 
the pivotal structures of the pivotal modules ${}_\cala\calm$ and $\caln_\calc$. Define
for $H\iN\Fun_\cala(\calm,\caln)$ a natural isomorphism
  \be
  \widetilde{Q}_H\Colon H \xRightarrow{\,\widehat{q}\,\circ\,\id_H\,\circ\,\widetilde{p}~\,}
  \Se^{\overline{\calc}}_\caln \cir H \cir \Se^{\cala}_\calm
  \cong \Se_{\Fun_\cala(\calm,\caln)}^{\overline{\calc}}(H) \,;
  \ee
this serves as a $\calc$-pivotal structure for $\Fun_\cala(\calm,\caln)$. It remains 
to check that the diagram
  \be
  \begin{tikzcd}[row sep=2.3em, column sep=-0.6em]
  H\actrF c \ar[rr,"\widetilde{Q}_{H\ActrF c}",Rightarrow]
  \ar[dr,"\widetilde{Q}_H \ActrF \;\overline{q}_{\overline{c}}\!\!\!\;",swap,Rightarrow]
  &~& \Se^{\overline{\calc}}_{\Fun_\cala(\calm,\caln)}(H\ActrF c)
  \ar[dl,"\eqref{Serre_twisted_op}",Rightarrow]
  \\
  ~& ~\Se^{\overline{\calc}}_{\Fun_\Cala(\calm,\caln)}(H)\,\ActrF \ldd c &~
  \end{tikzcd}
  \ee
commutes for every $c\iN\calc$ and $H\iN \Fun_\cala(\calm,\caln)$, where $\overline{q}$
is the pivotal structure of $\overline{\calc}$. Now indeed, for every $m\iN\calm$ the 
diagram
  \be
  \begin{tikzcd}[row sep=3.1em, column sep=6.2em]
  H(m)\actr c
  \ar[r,"\widehat{q}_{H(m)\actr c}"] \ar[d,"\id\actr\, \overline{q}_{\overline{c}}",swap]
  & \Se_\caln^{\overline{\calc}}\left(H(m)\actr c\right)
  \ar[r,"\,\Se_\caln^{\overline{\calc}}\circ (-\actr c)\circ\,H(\widetilde{p}_{m})~"]
  \ar[d,"\eqref{Serre_twisted_op}"]
  & \Se_\caln^{\overline{\calc}} (H(\Se_\calm^{\cala}(m))\actr c)
  \ar[d,"(\ref{Serre_twisted_op})"]
  \\
  H(m)\actr \ldd c \ar[r,swap,"\widehat{q}_{\,H(m)}\actr \id"]
  & \Se_\caln^{\overline{\calc}} (H(m)) \actr \ldd c
  \ar[r,swap,"\Se_\caln^{\overline{\calc}}\circ\,H(\widetilde{p}_{m})\actr \id"]
  & \Se_\caln^{\overline{\calc}} (H(\Se_\calm^{\cala}(m))) \actr \ldd c
  \end{tikzcd}
  \ee
commutes: the square on the left corresponds to the condition fulfilled by 
$\widehat{q}$ of being a $\overline{\calc}$-pivotal structure for $\caln$, while 
the square on the right commutes owing to naturality of $\widetilde{p}$.
 \\[2pt]
The claim (ii) follows analogously by considering as $\calb$-pivotal structure 
for $\Fun_\cala(\Calm,\caln)$ the natural isomorphism
  \be\label{piv_fun}
  \widetilde{P}_H\Colon H\xRightarrow{~\widehat{p}\,\circ\,\id_H\,\circ\,\widetilde{q}~\,}
  \Se^\cala_\caln\cir H\cir \Se^{\overline{\calb}}_\calm
  \cong \Se_{\Fun_\Cala(\calm,\caln)}^\calb(H) \,,
  \ee
where $\widehat{p}\colon \id_\caln \,{\xRightarrow{\,\simeq~\;}}\,\Se_\caln^{\cala}$ and 
$\widetilde{q}\colon \id_\calm \,{\xRightarrow{\,\simeq~\;}}\, \Se_\calm^{\overline{\calb}}$
are the corresponding pivotal structures of ${}_\cala\caln$ and $\calm_\calb$.
 \\[2pt]
Assertion (iii) is verified by making the routine check of the diagrams 
\eqref{pivotal_bimod1} and \eqref{pivotal_bimod2}.
\end{proof}

\begin{rem}
According to Corollary \ref{relative_Deligne_module_functors} the category of 
module functors is a model for the relative Deligne product. Therefore Proposition
\ref{pivotality_category_functors} implies that the product of two pivotal bimodules 
inherits a pivotal structure.
\end{rem}


\subsection{Pivotal Morita theory}

Given a bicategory $\mathscr{F}$, the existence of dualities for $1$-morphisms extends to a pseudo-func\-tor
  \be
  \begin{aligned}[c]
  (-)^\vee\Colon \mathscr{F} & \,\longrightarrow\, \mathscr{F}^{\,\text{op},\text{op}},
  \\
  \hspace*{2.5em} x & \,\longmapsto\, x \,,
  \\
  \hspace*{2.5em} (a\colon x\,{\to}\, y) & \,\longmapsto\, (a^\vee\colon y\,{\to}\, x) \,.
  \end{aligned}
  \ee
A \emph{pivotal structure} on a bicategory $\mathscr{F}$ with dualities is a 
pseudo-natural equivalence 
  \be\label{pivotal_st_bicategory}
  \textbf{P} \Colon \id_\mathscr{F} \xRightarrow{\;\simeq~\,} (-)^{\!\vee\vee}
  \ee
obeying $\textbf{P}_{\!x} \eq \id_x$ for every object $x\iN\mathscr{F}$.
A bicategory together with a pivotal structure is called a \emph{pivotal bicategory}.

\begin{rem}\label{quasi_pivotal}
A tensor category $\cala$ can be seen as a bicategory with a single object 
$\mathbf{\mathbbm{A}}$. The requirement that $\textbf{P}_{\!x} \eq \id_x$ imposed
on the pseudo-natural equivalence \eqref{pivotal_st_bicategory} ensures that a 
pivotal structure on the bicategory $\mathbf{\mathbbm{A}}$ recovers a pivotal structure 
on the tensor category $\cala$. A pseudo-natural equivalence \eqref{pivotal_st_bicategory} without this requirement corresponds to the notion of 
a \emph{quasi-pivotal} structure on $\cala$ \Cite{Sec.\ 4}{Sh3}, that is, a pair 
$(d,\gamma)$ where $d\iN\cala$ is an invertible object and 
$\gamma \eq \{\gamma_a \colon d\,{\otimes}\, a \,{\xrightarrow{\,\simeq\,}}\,a\dd \,{\otimes}\, d \}$ 
is a twisted half-braiding.
\end{rem}

\begin{defi}\label{def:pivMorita}
A Morita context $(\cala,\calb,\calm,\caln,\mixto{}{},\mixtdo{}{})$ is said to be 
\emph{pivotal} iff its associated bicategory $\Mor$ is pivotal.
\end{defi}

Next we establish that the Morita context of a pivotal module is indeed pivotal, thus
justifying the terminology.

\begin{lem}\label{pivotal_FunM}
Let $\calm$ be a pivotal module category over a pivotal tensor category $\cala$. Then
we have:
\Enumeratei
    \item 
$\calm$ has the structure of a pivotal $(\cala,\overline{\dualcat})$-bimodule category.
    \item 
For every $\cala$-module category $\caln$, the $\overline{\dualcat}$-module category 
$\Fun_\Cala(\calm,\caln)$ inherits a pivotal structure.
    \item 
The functor category $\FunM$ has the structure of a pivotal 
$(\overline{\dualcat},\cala)$-bimodule category.
\end{enumerate}
\end{lem}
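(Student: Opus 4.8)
The content of the lemma sits entirely in part (i): once $\calm$ is known to be a pivotal $(\cala,\overline{\dualcat})$-bimodule, parts (ii) and (iii) fall out of Proposition \ref{pivotality_category_functors}. So I would begin with (i). Recall $\calm$ is automatically an exact $(\cala,\overline{\dualcat})$-bimodule, so by Definition \ref{def:pivmodule}(iii) what has to be produced, besides the given pivotal left $\cala$-module structure $\widetilde p\colon\id_\calm\Rightarrow\Se_\calm^\cala$, is a pivotal right $\overline{\dualcat}$-module structure $\widetilde q\colon\id_\calm\Rightarrow\Se_\calm^{\dualcat}$ together with commutativity of the triangles \eqref{pivotal_bimod1} and \eqref{pivotal_bimod2}. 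I would build $\widetilde q$ by transport: since $\Se_\calm^\cala$ and $\lSe_\calm^\cala$ are mutually quasi-inverse, $\widetilde p$ induces a natural isomorphism $\id_\calm\cong\lSe_\calm^\cala$ (the inverse of $\lSe_\calm^\cala\Rightarrow\lSe_\calm^\cala\cir\Se_\calm^\cala\cong\id_\calm$, gotten by applying $\lSe_\calm^\cala$ to $\widetilde p$), and post-composing with the canonical identification $\lSe_\calm^\cala\cong\Se_\calm^{\dualcat}$ of Corollary \ref{Serre_functors_bimodules} gives $\widetilde q$.

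Next the two coherence triangles. For \eqref{pivotal_bimod1} the right $\overline{\dualcat}$-action functor $-\actr\overline F$ is the endofunctor $F$ itself, the twist \eqref{Serre_twisted} for this action is precisely the isomorphism $\Lambda_F\colon\Se_\calm^\cala\cir F\Rightarrow F\rra\cir\Se_\calm^\cala$ of Proposition \ref{Serre_and_module_functors}, and the pivotal structure of $\overline{\dualcat}$ is, by Proposition \ref{pivotal_dual_category}(i) (formula \eqref{pivotal_dualcat1}), the composite $F\xRightarrow{\widetilde p\cir\id}\Se_\calm^\cala\cir F\xRightarrow{\Lambda_F}F\rra\cir\Se_\calm^\cala\xRightarrow{\id\cir\widetilde p^{-1}}F\rra$. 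Substituting these in, both legs of \eqref{pivotal_bimod1} collapse to $\Lambda_F$ precomposed with $\widetilde p$, so the triangle commutes essentially by construction: the pivotal structure on $\overline{\dualcat}$ was engineered for exactly this compatibility. For \eqref{pivotal_bimod2} I would unwind $\widetilde q$ and the twist \eqref{Serre_twisted_op2}, observing that both the $\ldd(-)$-twisted left $\cala$-action on $\Se_\calm^{\dualcat}$ and the identification $\Se_\calm^{\dualcat}\cong\lSe_\calm^\cala$ are compatible with the $\ldd(-)$-twisted left $\cala$-module structure of $\lSe_\calm^\cala$ recorded after \eqref{Serre_twisted}; after this translation the triangle becomes the image, under the equivalence $\lSe_\calm^\cala$, of the pivotality condition \eqref{condition_pivotal_module} for $\widetilde p$, using in addition that $p^{-1}_{\ldd a}$ is the $\ldd(-)$-transpose of $p_a$ and that $p$ is monoidal. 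This last step is a mechanical diagram chase.

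Parts (ii) and (iii) then follow from Proposition \ref{pivotality_category_functors}. The regular bimodule ${}_\cala\cala_\cala$ is itself pivotal: its relative Serre functors are the double-dual functors, $\Se_\cala^\cala\cong(-)\dd\cong\Se_\cala^{\overline\cala}$, the monoidal pivotal structure $p$ supplies the trivializations, and the coherences \eqref{pivotal_bimod1}--\eqref{pivotal_bimod2} reduce to the tensoriality of $p$ together with \eqref{opp_pivotal}. Applying Proposition \ref{pivotality_category_functors}(iii) to the pivotal bimodules ${}_\cala\calm_{\overline{\dualcat}}$ (from (i)) and ${}_\cala\cala_\cala$ endows $\Fun_\cala(\calm,\cala)=\FunM$ with a pivotal $(\overline{\dualcat},\cala)$-bimodule structure, which is (iii). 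Applying Proposition \ref{pivotality_category_functors}(ii) with $\calb=\overline{\dualcat}$, using that $\calm_{\overline{\dualcat}}$ is a pivotal module by (i), endows $\Fun_\cala(\calm,\caln)$ with a pivotal left $\overline{\dualcat}$-module structure, which is (ii); equivalently, via the equivalence $\Fun_\cala(\calm,\caln)\simeq\FunM\boxtimes_\cala\caln$ of \eqref{Deligne_left_eq}, the $\overline{\dualcat}$-part of the pivotal structure is carried by the first tensorand $\FunM$ as in (iii).

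The main obstacle is the verification of \eqref{pivotal_bimod2} in part (i): unlike \eqref{pivotal_bimod1}, which holds almost tautologically because the pivotal structure of $\overline{\dualcat}$ is itself defined through $\widetilde p$, the triangle \eqref{pivotal_bimod2} couples the pivotal structure $p$ of $\cala$ with the transported structure $\widetilde q$, so one must first line up the various twisted module structures ($\Se_\calm^\cala$, $\lSe_\calm^\cala$, $\Se_\calm^{\dualcat}$) along the quasi-inverse pair $(\Se_\calm^\cala,\lSe_\calm^\cala)$ before the chase reduces to \eqref{condition_pivotal_module}. No genuinely new idea is needed beyond that condition; everything else is transport of structure or a direct appeal to Proposition \ref{pivotality_category_functors}.
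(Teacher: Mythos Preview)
Your proposal is correct and follows essentially the same approach as the paper: define $\widetilde q$ by transporting $\widetilde p$ along the quasi-inverse pair $(\Se_\calm^\cala,\lSe_\calm^\cala)$ and the identification $\lSe_\calm^\cala\cong\Se_\calm^{\dualcat}$ from Corollary \ref{Serre_functors_bimodules}, then deduce (ii) and (iii) from Proposition \ref{pivotality_category_functors} applied with the pivotal regular bimodule ${}_\cala\cala_\cala$. The only cosmetic difference is in which coherence is written out: the paper checks explicitly that $\widetilde q$ is a pivotal $\dualcat$-module structure (its diagram ``commutes trivially'' for the same reason your \eqref{pivotal_bimod1} does---the pivotal structure of $\dualcat$ is defined through $\widetilde p$), whereas you spell out \eqref{pivotal_bimod1} and \eqref{pivotal_bimod2}; all of these verifications are of the same routine nature.
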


\begin{proof}
Denote by $\widetilde{p}\colon \id_\calm\,{\xRightarrow{\;\sim\;}}\,\Se_\calm^{\cala}$ 
the pivotal structure of ${}_\cala\calm$. According to Corollary 
\ref{Serre_functors_bimodules} we have $\Se_\calm^\dualcat \,{\cong}\, \lSe_\calm^\cala$,
so that for any $m\iN\calm$ we can define a natural isomorphism
  \be\label{Mpivotal_dual}
  \widetilde{q}_m := \widetilde{p}^{\,-1}_{\,\lSe_\calm(m)} \Colon
  m\xrightarrow{\;\cong\;} \lSe_\calm^\cala(m) \cong \Se_\calm^\dualcat(m) \,.
  \ee
Recall from Proposition \ref{pivotal_dual_category} that $\dualcat$ is endowed with the 
pivotal structure \eqref{pivotal_dualcat}. In view of Remark \ref{Serre_quasiinverse} this pivotal structure coincides with the composition
  \be
  q_F\Colon F\xRightarrow{\,\widetilde{q}\,\circ\,\id\,}
  \lSe_\calm^\cala\circ F\xRightarrow{\;\eqref{lSerre_module_functor}\;}
  F\lla\circ\lSe_\calm^\cala\xRightarrow{\,\id\,\circ\,\widetilde{q}^{-1}}F\lla.
  \ee
Now we verify that $\widetilde{q}$ is compatible with this pivotal 
structure, i.e.\ that the diagram
  \be
  \begin{tikzcd}[row sep=2.3em, column sep=1.6em]
  F(m)  \ar[rr,"\widetilde{q}_{F(m)}"]\ar[dr,"q_F\act \widetilde{q}_m\!\!\!",swap]
  &~& \lSe_\calm^\cala\cir F(m) \ar[dl,"\!\eqref{lSerre_module_functor}"]\\
  ~& F\lla\cir \lSe_\calm^\cala(m) &~
  \end{tikzcd}
  \ee
commutes for every $F\iN\dualcat$ and $m\iN\calm$. In fact, by invoking the relevant definitions, the diagram translates to
  \be
  \begin{tikzcd}[row sep=3.1em, column sep=3.9em]
  F(m) \ar[drr,"q_F",dotted]\ar[rrr,"\widetilde{q}_{\,F(m)}"]\ar[d,swap,"\widetilde{q}_{\,F(m)}"]
  &~&~& \lSe_\calm^\cala\cir F(m) 
  \ar[d,"\eqref{lSerre_module_functor}\,"]
  \\
  \lSe_\calm^\cala\cir F(m) 
  \ar[r,"\eqref{lSerre_module_functor}\,",swap]
  & F\lla \cir \lSe_\calm^\cala(m)\ar[r,"~\id\circ\,\widetilde{q}_{\,m}^{-1}",swap] 
  & F\lla(m)\ar[r,swap,"~\id\circ\,\widetilde{q}_{\,m}"]
  & F\lla \cir \lSe_\calm^\cala(m) 
    \end{tikzcd}
  \ee
which commutes trivially. Thus it is established that ${}_\cala\calm_{\,\overline{\dualcat}}$
is a pivotal bimodule category. 
  \\[2pt]
Statement (ii) follows from (i) and Proposition \ref{pivotality_category_functors}(ii).
Explicitly, the pivotal structure is the composite
  \be\label{FunMN_piv}
  \widehat{p}_H\Colon H \xRightarrow{~\widetilde{p}^{\,\caln}\circ\,\id~\,}
  \Se_\caln^\cala\cir H \xRightarrow{\,\eqref{Serre_module_functor}~\,}
  H\rra\cir \Se_\calm^\cala \xRightarrow{\,\id\,\circ\,\left(\widetilde{p}^{\,\calm}\right)^{-1}\,}
  H\rra \cong \Se^{\overline{\dualcat}}_{\Fun_\Cala(\calm,\caln)}(H)
  \ee
for a module functor $H\iN\Fun_\Cala(\calm,\caln)$.
  \\[2pt]
Similarly, claim (iii) follows from (i) and Proposition \ref{pivotality_category_functors}(iii) 
by considering ${}_\cala\caln_\cala \eq \cala$ as the regular bimodule category. In 
this situation the pivotal structures are explicitly given by
  \be
  \label{FunM_piv}
  \widehat{p}_H\Colon H \xRightarrow{~p\,\circ\,\id~\,}
  (-)\dd \cir H \xRightarrow{\,\eqref{Serre_module_functor}~\,}
  H\rra \cir \Se_\calm^\cala \xRightarrow{\,\id\,\circ\,\widetilde{p}^{-1}~\,}
  H\rra \cong \Se^{\overline{\dualcat}}_{\Fun_\cala(\calm,\cala)}(H)
  \ee
and
  \be
  \label{FunM_piv1}
  \widehat{q}_H\Colon H \xRightarrow{~\id\,\circ\,\widetilde{p}~\,}
  H \circ \Se_\calm^\cala  \xRightarrow{\,\eqref{Serre_module_functor}~\,}
   (-)\dd \cir H\lla  \xRightarrow{\,p^{-1}\,\circ\,\id~\,}
  H\lla \cong \Se_\FunM^{\overline{\cala}}(H)
  \ee
for a module functor $H\iN\FunM$.
\end{proof}

\begin{thm}\label{thm:pivmodule-pivMorita}
For $\calm$ a pivotal module category over a pivotal tensor category $\cala$, 
its Morita context
$(\cala,\,\overline{\dualcat},\,\calm,\,\FunM,\,\mixto{}{},\,\mixtdo{}{})$
is a pivotal Morita context.
\end{thm}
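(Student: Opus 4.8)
The plan is to equip the bicategory $\Mor$ (which by Theorem
\ref{Morita_bicategory_dualities} is a bicategory with dualities) with a pivotal
structure, i.e.\ a pseudo-natural equivalence
$\textbf{P}\colon \id_\Mor \xRightarrow{~\simeq~} (-)\dd$ satisfying
$\textbf{P}_{\textbf{0}} \eq \id_{\textbf{0}}$ and
$\textbf{P}_{\textbf{1}} \eq \id_{\textbf{1}}$; by Definition \ref{def:pivMorita}
this proves the theorem. By Proposition \ref{double_duals} the double-dual
pseudo-functor $(-)\dd$ of $\Mor$ is identified, via the isomorphisms displayed
there, with the relative Serre pseudo-functor $\Se$ of Definition
\ref{def_Serre_pseudo_functor}, whose compositors are the isomorphisms
\eqref{rSerre_compositor}; we use this identification throughout.

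First I would assemble the $2$-morphism components of $\textbf{P}$ from the pivotal
data already available: on the four hom-categories of $\Mor$ set
$\textbf{P}_a := p_a$ for $a\iN\cala$, $\textbf{P}_F := \overline{q}_F$ for
$F\iN\overline{\dualcat}$, $\textbf{P}_m := \widetilde{p}_m$ for $m\iN\calm$, and
$\textbf{P}_H := \widehat{p}_H$ for $H\iN\FunM$, where $p$ is the pivotal structure
of $\cala$, $\overline{q}$ the pivotal structure \eqref{pivotal_dualcat1} of
$\overline{\dualcat}$ from Proposition \ref{pivotal_dual_category}, $\widetilde{p}$
the given pivotal module structure of $\calm$, and $\widehat{p}$ the pivotal
structure \eqref{FunM_piv} on $\FunM$ provided by Lemma \ref{pivotal_FunM}(iii).
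Together with the identity $1$-morphisms $\textbf{P}_{\textbf{0}} \eq \un$ and
$\textbf{P}_{\textbf{1}} \eq \id_\calm$ these are data of the right type, every
component being an invertible $2$-morphism.

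It then remains to check the two coherence conditions for a pseudo-natural
equivalence. Naturality of $\textbf{P}$ with respect to $2$-morphisms of $\Mor$ is
immediate, since $p$, $\overline{q}$, $\widetilde{p}$, $\widehat{p}$ and all the
structural isomorphisms entering their definitions are natural, and the unit
condition holds because $\textbf{P}_x \eq \id_x$. Compatibility with horizontal
composition is the commutativity, for all composable $1$-morphisms $s$ and $t$ of
$\Mor$, of the square
  \be
  \begin{tikzcd}[column sep=huge]
  \textbf{P}\circ s\circ t\ar[r,"\textbf{P}_{s\circ t}"]\ar[d,"\textbf{P}_s\circ\,\id",swap]
  &(s\circ t)\dd\circ\textbf{P}\ar[d,"\eqref{rSerre_compositor}"]\\
  s\dd\circ\textbf{P}\circ t\ar[r,"\id\circ\,\textbf{P}_t",swap]
  &s\dd\circ t\dd\circ\textbf{P}
  \end{tikzcd}
  \ee
where the right-hand vertical arrow is the relevant compositor
\eqref{rSerre_compositor} of $(-)\dd$ and $\circ$ denotes horizontal composition in
$\Mor$. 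As $\Mor$ has only two objects this splits into a finite list of cases, one
for each of the eight horizontal composition functors. The two cases internal to
$\cala$ and to $\overline{\dualcat}$ hold because $p$ and $\overline{q}$ are
monoidal natural isomorphisms; the two cases combining $\calm$ with an action of
$\cala$ or of $\overline{\dualcat}$ are the conditions
\eqref{condition_pivotal_module} and \eqref{pivotal_bimod1} of the pivotal
$(\cala,\overline{\dualcat})$-bimodule structure on $\calm$ established in Lemma
\ref{pivotal_FunM}(i); and the two cases combining $\FunM$ with an action of
$\overline{\dualcat}$ or of $\cala$ are the pivotality conditions for $\FunM$ as a
$(\overline{\dualcat},\cala)$-bimodule, verified in Lemma \ref{pivotal_FunM}(iii)
via Proposition \ref{pivotality_category_functors}(iii).

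The main obstacle is the remaining two cases, namely those in which $s\circ t$ is
one of the mixed products $\mixt{m}{H}\colon \calm\Times\FunM\To\cala$ or
$\mixtd{H}{m}\colon \FunM\Times\calm\To\overline{\dualcat}$; for these the
right-hand vertical arrow of the square is the compositor
\eqref{rSerre_compositor}(iii), respectively \eqref{rSerre_compositor}(vi), which
couples the pivotal structures of $\calm$ and of $\FunM$ through $\cala$ and
$\overline{\dualcat}$. I would dispatch them by unwinding the definitions
\eqref{pivotal_dualcat1} of $\overline{q}$ and \eqref{FunM_piv} of $\widehat{p}$
together with the definition of $\widetilde{p}$ and of the twisted isomorphism
$\Lambda$ of \eqref{Serre_module_functor}, whereupon both squares reduce to the
compatibility \eqref{Serre_compatibility_composition} of $\Lambda$ with composition
of module functors together with the naturality of $\Lambda$ in its functor
argument. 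This is precisely the bookkeeping already carried out at the end of the
proof of Theorem \ref{Thm_Radford_pseudo}, now with the distinguished invertible
$1$-morphisms $\DD$ replaced everywhere by the trivialized pivotal data
$\textbf{P}$. Once all eight cases are verified, $\textbf{P}$ is a pivotal structure
on $\Mor$, as required.
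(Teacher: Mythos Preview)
Your proposal is correct and follows essentially the same route as the paper: define $\textbf{P}$ on the four hom-categories by $p,\overline{q},\widetilde{p},\widehat{p}$, then verify the eight compositor squares, with the six non-mixed cases reducing to monoidality of $p,\overline{q}$ and to the pivotal-bimodule conditions of Lemma \ref{pivotal_FunM}. Two small corrections: the $\mixtd{H}{m}$ square needs, in addition to \eqref{Serre_compatibility_composition} and naturality, the pivotal-module condition \eqref{condition_pivotal_module} for $\widetilde{p}$ (while the $\mixt{m}{H}$ square is in fact simpler and commutes by a direct cancellation once $\widehat{p}_H$ is unwound); and your pointer to the proof of Theorem \ref{Thm_Radford_pseudo} is not apt, since that proof merely asserts the analogous check is routine without carrying it out.
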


\begin{proof}
Let $\Mor$ be the bicategory associated to the Morita context of $\calm$. We need 
to construct a pseudo-natural equivalence 
$\textbf{P}\colon \id_\Mor \,{\xRightarrow{\;\simeq\;}}\, (-)\dd$, subject to the
condition that the components of $\textbf{P}$ on any object is the identity. For 
any $1$-morphism $a\iN\cala$ and any $F\iN\overline{\dualcat}$, $m\iN\calm$ and 
$H\iN\FunM$ we define
 \begin{enumerate}[(i)]
    \item 
$\textbf{P}_{\!a} \colon a \,{\xcong}\, a\dd$ as the pivotal structure $p_a$ of $\cala$;
    \item 
$\textbf{P}_{\!F}^{} \colon F \,{\xRightarrow{~\cong~\,}}\, F\rra$ as the isomorphism 
$\overline{q}_F$ in \eqref{pivotal_dualcat1} which serves as a pivotal structure 
of $\dualcat$;
    \item 
$\textbf{P}_{\!m} \colon m \,{\xcong}\, \Se_\calm^\cala(m)$ as the pivotal structure 
$\widetilde{p}_m$ of the module ${}_\cala\calm$;
    \item
$\textbf{P}_{\!H}^{} \colon H \,{\xRightarrow{~\cong~\,}}\, H\rra$ as the isomorphism 
$\widehat{p}_H$ in \eqref{FunM_piv} which serves as a pivotal structure of the 
module ${}_{\overline{\dualcat}\,}\FunM$.
\end{enumerate}
These assignments are natural for $2$-morphisms in $\Mor$. The compatibility 
with composition of $1$-morphisms reduces to the commutativity of the diagram
  \be
  \begin{tikzcd}[row sep=2.2em, column sep=1.8em]
  s\cir t \ar[rr,"\textbf{P}_{\!s\circ t}^{}",Rightarrow]
  \ar[dr,"\textbf{P}_{\!s}^{}\circ\,\textbf{P}_{\!t}^{}\!",Rightarrow,swap]
  &~& (s\cir t) \dd\ar[dl,Rightarrow,"\cong"]
  \\
  ~& s\dd {\circ}\, t\dd&
  \end{tikzcd}
  \ee
for $s$ and $t$ composable $1$-morphisms in $\Mor$. These translate into the 
following eight conditions,
	  which are indeed all satisfied:
 \\{}
The tensor categories $\cala$ and $\overline{\dualcat}$ are pivotal, i.e.:
\Enumeratei
    \item
the monoidality condition of $p\colon \id_\cala \,{\xRightarrow{\,\cong~\,}}\,(-)\dd$ 
in the case $s \eq a$ and $t \eq b$ in $\cala$;
    \item 
the monoidality condition of $\overline{q} \colon \id_{\overline{\dualcat}}
\,{\xRightarrow{\,\cong\,~}}\,(-)\rra$ in the case $s \eq F_1$ and $t \eq F_2$ in 
$\overline{\dualcat}$.
\end{enumerate}
Further, the bimodules $\calm$ and $\FunM$ are pivotal ($\widetilde{p}$ and
$\widehat{p}$ are bimodule natural transformations), i.e.:
\Enumeratei \setcounter{enumi}{2}
    \item
in case $s \eq a\iN\cala$ and $t \eq m\iN\calm$, the condition 
\eqref{condition_pivotal_module} fulfilled by $\widetilde{p}$;
    \item 
for $s \eq m\iN\calm$ and $t \eq F\iN\overline{\dualcat}$, the condition 
\eqref{pivotal_bimod1} fulfilled by $\widetilde{p}$;
    \item 
in case $s \eq F\iN\overline{\dualcat}$ and $t \eq H\iN\FunM$, the condition 
\eqref{condition_pivotal_module} fulfilled by $\widehat{p}$;
    \item 
for $s \eq H\iN\FunM$ and $t \eq a\iN\cala$, the condition \eqref{pivotal_bimod1}
fulfilled by $\widehat{p}$.
\end{enumerate}
Finally, two additional conditions involving the mixed products:
\Enumeratei \setcounter{enumi}{6}
    \item 
for $m\iN\calm$ and $H\iN\FunM$ the commutativity of the diagram
  \be
  \begin{tikzcd}[row sep=2.3em, column sep=1.3em]
  \mixt{m}{H} \ar[rr,"p_{\mixto{m}{H}}^{}"]
  \ar[dr,"\mixt{\widetilde{p}_m}{\widehat{p}_H\!\!\!}",swap]
  &~& (\mixto{m}{H})\dd \ar[dl,"\!\eqref{Serre_module_functor}"]
  \\
  ~& \mixt{\Se_\calm^\cala(m)}{H\rra} &~
  \end{tikzcd}
  \ee 
which can be rewritten in the more explicit form
  \be
  \begin{tikzcd}[row sep=2.2em, column sep=3.4em]
  H(m) \ar[rr,"p_{H(m)}^{}"] \ar[d,swap,"p\,\circ\,\id"]
  \ar[ddr,dotted,"\!\widehat{p}_H^{}"]
  &~& H(m)\dd \ar[dd,"\eqref{Serre_module_functor}"]
  \\
  (-)\dd \cir H(m) \ar[d,"\eqref{Serre_module_functor}",swap] &~&~ 
  \\
  H\rra \cir \Se_\calm^\cala(m) \ar[r,swap,"\id\,\circ\,\widetilde{p}_m^{-1}\,"]
  & H\rra(m) \ar[r,swap,"\id\,\circ\,\widetilde{p}_m"]
  & H\rra \cir \Se_\calm^\cala(m)
  \end{tikzcd}
  \ee
which trivially commutes;
\item similarly, for $m\iN\calm$ and $H\iN\FunM$ the commutativity of the diagram
  \be
  \begin{tikzcd}[row sep=2em, column sep=1.3em]
  \mixtd{H}{m} \ar[rr,"\overline{q}_{\mixtdo{H}{m}}",Rightarrow]
  \ar[dr,"\mixtd{\,\widehat{p}_H}{\widetilde{p}_m\,}",swap,Rightarrow]
  &~& (\mixtdo{H}{m})\rra \ar[dl,"\cong",Rightarrow]
  \\
  ~& \mixtd{H\rra}{\Se_\calm^\cala(m)}&
  \end{tikzcd}
  \ee
which is the same as the diagram
  \be\hspace*{-5.7em}
  \begin{tikzcd}[row sep=2.9em, column sep=2.7em]
  (-\Act m)\cir H\ar[r,,end anchor={[xshift=-6ex]},"\widetilde{p}\cir\id",Rightarrow]
  \ar[d,start anchor={[xshift=2.8ex]},end anchor={[xshift=2.8ex]},swap,
       "(-\act\widetilde{p}_m)\circ p\cir\id\,",Rightarrow]
  & \hspace*{-6ex} \Se_\calm^\cala\cir(-\Act m)\cir H
  \ar[r,Rightarrow,"\eqref{Serre_module_functor}~"]
  \ar[dl,"\eqref{Serre_module_functor}\,",Rightarrow]
  & \left[(-\Act m)\cir H\right]\rra {\circ}\, \Se_\calm^\cala \ar[d,"\,\cong",Rightarrow]
  \ar[r,Rightarrow,"\id\cir\widetilde{p}^{-1}\,"]
  & \left[(-\Act m)\cir H\right]\rra \ar[d,"\,\cong",Rightarrow]
  \\
  \hspace*{6ex} (-\Act \Se_\calm^\cala(m))\,{\circ}\,(-)\dd{\circ} H
  \ar[rr,end anchor={[xshift=-12ex]},"\eqref{Serre_module_functor}",swap,Rightarrow]
  &~& \hspace*{-12ex} (-\Act\Se_\calm^\cala(m))\cir H\rra {\circ}\, \Se_\calm^\cala
  \ar[r,swap,"\id\cir\widetilde{p}^{-1}\,",Rightarrow]
  & (-\Act\Se_\calm^\cala(m))\cir H\rra
  \end{tikzcd}
  \ee
which is commutative: the left triangle corresponds to the condition of $\widetilde{p}$
being a pivotal structure for $\calm$, the rightmost square commutes due to naturality,
and the square in the middle is the compatibility \eqref{Serre_compatibility_composition}.
\end{enumerate}
This shows that $\textbf{P}\colon \id_\Mor \,{\xRightarrow{\;\cong\;\,}}\,(-)\dd$ is 
a pivotal structure on the bicategory $\Mor$, and thus the claim is proven.
\end{proof}

\begin{defi}\label{PME}
Two pivotal tensor categories $\cala$ and $\calb$ are said to be \emph{pivotal Morita 
equivalent} iff there exists a pivotal $\cala$-module category $\calm$ together with 
a pivotal equivalence $\calb\,{\simeq}\,\overline{\dualcat}$.
\end{defi}

\begin{prop}\label{eq_relation}
Let $\cala$ be a pivotal category and $\calm$ a pivotal $\cala$-module.
\begin{enumerate}[\rm (i)]
    \item 
The tensor equivalence
  \be
  \label{reflexivity_PME}
  \cala\xrightarrow{\;\simeq\;}\overline{\cala_\cala^*} \,,
  \qquad a\xmapsto{\,~\,} -\ot a
  \ee
from {\rm\Cite{Ex.\,7.12.3}{EGno}} is pivotal.
    \item 
The canonical tensor equivalence
  \be
  \text{\normalfont can}\Colon \cala\xrightarrow{\;\simeq\;}(\dualcat)_\calm^* \,,
  \qquad a\xmapsto{\,~\,} a\act-
  \ee
from {\rm\Cite{Thm.\,7.12.11}{EGno}} is pivotal.
\end{enumerate}
\end{prop}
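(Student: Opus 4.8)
The plan is, in both parts, to start from the pivotal structure produced on a dual tensor category by Proposition~\ref{pivotal_dual_category}, evaluate it on the distinguished $1$-morphism in question, and check that the equivalence carries it to the pivotal structure $p$ of $\cala$. In each case the verification reduces, after unwinding definitions, to an identity already contained in the hypotheses: the monoidality of $p$ in part~(i), and the defining condition \eqref{condition_pivotal_module} of a pivotal module structure in part~(ii).

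For (i), one first recalls that for the regular module ${}_\cala\cala$ one has $\Se_\cala^\cala\cong(-)\dd$ and that $p$ itself is a pivotal $\cala$-module structure on $\cala$ --- indeed for $\widetilde{p}\eq p$ the condition \eqref{condition_pivotal_module} is exactly the monoidality of $p$. Hence $\overline{\cala_\cala^*}$ carries the pivotal structure \eqref{pivotal_dualcat1} with $\widetilde{p}\eq p$. The endofunctor $-\ot a$ has iterated right adjoint $-\ot a\dd$, and for this functor the step \eqref{Serre_module_functor} appearing in \eqref{pivotal_dualcat1} is nothing but the monoidal constraint of $(-)\dd$; unwinding, one finds that this pivotal structure evaluated on $-\ot a$ is, at $x\iN\cala$, the composite
  \be
  x\ot a\xrightarrow{~p_{x\ot a}~}(x\ot a)\dd\xrightarrow{~\cong~}x\dd\ot a\dd\xrightarrow{~p_x^{-1}\ot\id~}x\ot a\dd .
  \ee
Factoring $p_{x\ot a}$ as $x\ot a\xrightarrow{p_x\ot p_a}x\dd\ot a\dd\xrightarrow{\cong}(x\ot a)\dd$ via the monoidality of $p$, the two middle maps cancel and the composite collapses to $\id_x\ot p_a$. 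Thus along $a\mapsto-\ot a$ this pivotal structure pulls back to $p$, which proves (i).

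For (ii) one proceeds in the same spirit, now with the $\dualcat$-module $\calm$ in place of ${}_\cala\cala$. Here $\text{can}(a)\eq a\act-$ is a $\dualcat$-module endofunctor of $\calm$ whose iterated left adjoint is $a\dd\act-$; the pivotal $\dualcat$-module structure of $\calm$ is $\widetilde{q}_m\eq\widetilde{p}^{\,-1}_{\lSe_\calm^\cala(m)}$ from Lemma~\ref{pivotal_FunM}, and $\Se_\calm^\dualcat\cong\lSe_\calm^\cala$ by Corollary~\ref{Serre_functors_bimodules}. Applying \eqref{pivotal_dualcat} with $(\dualcat,\calm,\widetilde{q})$ in the roles of $(\cala,\calm,\widetilde{p})$, the pivotal structure of $(\dualcat)_\calm^*$ evaluated on $\text{can}(a)$ becomes, at $m\iN\calm$, a composite
  \be
  a\act m\xrightarrow{~\id_a\act\widetilde{q}_m~}a\act\Se_\calm^\dualcat(m)\xrightarrow{~\eqref{Serre_module_functor}~}\Se_\calm^\dualcat(a\dd\act m)\xrightarrow{~\widetilde{q}^{-1}~}a\dd\act m .
  \ee
Expanding $\widetilde{q}$ through $\widetilde{p}$, rewriting the occurrences of $\Se_\calm^\dualcat\cong\lSe_\calm^\cala$ by means of the twisted (bi)module structures \eqref{Serre_twisted} of the relative Serre functors, and identifying the middle map \eqref{Serre_module_functor} for $a\act-$ with the twisting by $a\dd$, the condition \eqref{condition_pivotal_module} for $\widetilde{p}$ together with naturality of $\widetilde{p}$ makes all the relative Serre functor factors cancel, so that the composite equals $p_a\act\id_m$. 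Since $p_a\act\id_m$ is precisely $\text{can}(p_a)$ at $m$, this shows that $\text{can}$ is pivotal.

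The main obstacle will be the bookkeeping in (ii): one must keep carefully apart the $\cala$- and $\dualcat$-module structures on $\calm$, their left and right relative Serre functors with the corresponding twistings, and the adjoint/dual conventions for $\dualcat$ versus $\overline{\dualcat}$. Once those are pinned down, the cancellation of the relative Serre functor factors is mechanical, just as the two middle maps cancel in (i).
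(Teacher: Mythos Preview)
Your proposal is correct and follows essentially the same approach as the paper: in both parts you unwind the pivotal structure \eqref{pivotal_dualcat}/\eqref{pivotal_dualcat1} on the relevant dual tensor category and reduce to the monoidality of $p$ (for (i)) respectively to the defining condition \eqref{condition_pivotal_module} (for (ii)). The one streamlining the paper adds in part (ii) is that instead of going through the $\dualcat$-pivotal structure $\widetilde{q}$ and $\Se_\calm^\dualcat\cong\lSe_\calm^\cala$ and then unwinding back, it invokes the other half of Corollary~\ref{Serre_functors_bimodules}, namely $\lSe_\calm^{\dualcat}\cong\Se_\calm^\cala$, to write the pivotal structure of $(\dualcat)_\calm^*$ \emph{directly} as
\[
(a\act-)\xRightarrow{\;\widetilde{p}\,\circ\,\id\;}\Se_\calm^\cala(a\act-)\xRightarrow{\;\eqref{Serre_twisted}\;}a\dd\act\Se_\calm^\cala\xRightarrow{\;\id\,\circ\,\widetilde{p}^{-1}\;}(a\dd\act-),
\]
so that \eqref{condition_pivotal_module} applies immediately without the intermediate bookkeeping you flag at the end.
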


\begin{proof}~{}
\begin{enumerate}[(i)]
  \item 
The pivotal structure for the functor $-\ot a$ in $\overline{\cala_\cala^*}$ is given by
  \be
  q_{-\otimes a}\Colon (-\ot a)\xRightarrow{\;p_{-\otimes a}^{}\;\,}(-\ot a)\dd
  \xRightarrow{\;\eqref{Serre_module_functor}\;\;}(-)\dd \ot a\dd
  \xRightarrow{\;p_{-}^{-1}\otimes\id\;\;}(-\ot a\dd) \,.
  \ee    
But since $p\colon \id_\cala \,{\xRightarrow{\;\simeq\,\;}}\,(-)\dd$ is monoidal, we have
$q_{-\otimes a} \eq \id_{-}\ot p_a$.
  \item
According to Corollary \ref{Serre_functors_bimodules} there is an isomorphism 
$\lSe_\calm^{\dualcat} \,{\cong}\, \Se_\calm^\cala$. Taking this into consideration 
the pivotal structure on $(\dualcat)_\calm^*$ is given by
  \be
  \begin{aligned}
  \overline{\overline{q}}_{a\act-}\Colon (a\act-) \xRightarrow{\;\widetilde{p}\,\circ\,\id\;\,}
  \Se_\calm^\cala \left(a\act-\right) &\xRightarrow{\,\eqref{Serre_twisted}\;\,}
  a\dd\act\Se_\calm^\cala \\
  &\xRightarrow{\;\id\,\circ\, \widetilde{p}^{-1}\;} (a\dd\act-)
  = (a\act-)\lla .
  \end{aligned}
  \ee
But in a similar manner the defining condition \eqref{condition_pivotal_module} of 
$\widetilde{p}$ being a pivotal structure for $\calm$ implies that
$\text{\normalfont can}(p_a) \eq \overline{\overline{q}}_{a\act-}\,$.
\end{enumerate}
  ~\\[-2.3em]~
\end{proof}


\subsection{The bicategory of pivotal modules}

Pivotal modules over a pivotal tensor category form a pivotal bicategory, as 
presented in \Cite{Def.\,5.2}{schaum2} in terms of inner-product module 
categories. Here we express this fact in the language of relative Serre functors. Let
$\cala$ be a pivotal tensor category, and denote by $\textbf{Mod}^{\text{piv}\!}(\cala)$ 
the $2$-category that has pivotal $\cala$-module categories as objects, $\cala$-module 
functors as $1$-morphisms and module natural transformations as $2$-morphisms. Since 
pivotal modules are exact \Cite{Prop.\,4.24}{fuScSc}, every module functor 
$H\colon {}_{\cala\,}\caln_1 \To {}_{\cala\,}\caln_2$ comes with adjoints
\be
  H^* := H\la :~ {}_{\cala\,}\caln_2 \rarr{} {}_{\cala\,}\caln_1 \qquad\text{and}\qquad
  {}^{*\!}H := H\ra :~ {}_{\cala\,}\caln_2 \rarr{} {}_{\cala\,}\caln_1 \,.
  \ee
These turn $\textbf{Mod}^{\text{piv}\!}(\cala)$ into a bicategory with dualities 
for $1$-morphisms. Moreover, $\textbf{Mod}^{\text{piv}\!}(\cala)$ is endowed with 
a pivotal structure \eqref{pivotal_st_bicategory}. Indeed, given any $1$-morphism
$H\colon {}_{\cala\,}\caln_1 \Rarr{} {}_{\cala\,}\caln_2$ in 
$\textbf{Mod}^{\text{piv}\!}(\cala)$, define 
  \be
  \label{pivotal_structure_bicategory_of_pivotal_modules}
  \textbf{P}_{\!H}^{}\Colon H
  \xRightarrow{\;\id\cir\widetilde{p}_1~} H \cir \Se_{\caln_1}^\cala
  \xRightarrow{\;\eqref{Serre_module_functor}~} \Se_{\caln_2}^\cala \cir H\lla
  \xRightarrow{\;(\widetilde{p}_2)^{-1}\cir\id~\,} H\lla ,
  \ee
where $\widetilde{p}_i$ are the pivotal structures of the module categories $\caln_i$. 
The $2$-morphisms $\textbf{P}_{\!H}^{}$ are invertible and natural in $H$. Moreover,
\eqref{Serre_compatibility_composition} implies that they are compatible with the
composition of module functors. Therefore $\textbf{P}$ constitutes a pivotal structure 
on the $2$-category $\textbf{Mod}^{\text{piv}\!}(\cala)$.

\begin{thm}\label{bicategories_pivotal_modules}
Two pivotal tensor categories $\cala$ and $\calb$ are pivotal 
Morita equivalent if and only if $\mathbf{Mod}^{\rm{piv}\!}(\cala)$ and 
$\mathbf{Mod}^{\rm{piv}\!}(\calb)$ are equivalent as pivotal bicategories.
\end{thm}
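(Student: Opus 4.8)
The plan is to upgrade the classical correspondence between categorical Morita equivalence of finite tensor categories and biequivalences of their $2$-categories of exact module categories, by keeping track of the pivotal data assembled in the previous subsections. As a preliminary I would record that pivotal Morita equivalence is reflexive and symmetric: reflexivity is Proposition \ref{eq_relation}(i), while symmetry follows because by Lemma \ref{pivotal_FunM}(i) a pivotal $\cala$-module $\calm$ is also a pivotal $\overline{\dualcat}$-module, the canonical equivalence $\cala\simeq(\dualcat)_\calm^*$ is pivotal by Proposition \ref{eq_relation}(ii), and transporting this pivotal module structure along a pivotal tensor equivalence $\calb\simeq\overline{\dualcat}$ yields a pivotal $\calb$-module witnessing the reversed equivalence.

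For the forward implication I would assume that $\cala$ and $\calb$ are pivotal Morita equivalent, witnessed by a pivotal $\cala$-module $\calm$ and a pivotal tensor equivalence $\calb\simeq\overline{\dualcat}$. Since a pivotal tensor equivalence respects internal Homs and hence relative Serre functors, by transport of structure it induces an equivalence of pivotal bicategories $\mathbf{Mod}^{\mathrm{piv}}(\calb)\simeq\mathbf{Mod}^{\mathrm{piv}}(\overline{\dualcat})$, so it suffices to produce a pivotal biequivalence $\mathbf{Mod}^{\mathrm{piv}}(\cala)\simeq\mathbf{Mod}^{\mathrm{piv}}(\overline{\dualcat})$. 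I would take the pseudo-functor $\Psi$ that sends a pivotal $\cala$-module $\caln$ to $\Fun_\cala(\calm,\caln)$, regarded as a left $\overline{\dualcat}$-module carrying the pivotal structure supplied by Lemma \ref{pivotal_FunM}(ii) and Proposition \ref{pivotality_category_functors}, a module functor $G$ to $G\cir(-)$, and a module natural transformation to whiskering; its compositors come from the associators and it preserves identities. That $\Psi$ is a biequivalence is standard categorical Morita theory: a quasi-inverse is $\calp\mapsto\Fun_{\overline{\dualcat}}(\calm,\calp)$, using the pivotal canonical equivalence $\cala\simeq(\dualcat)_\calm^*$ of Proposition \ref{eq_relation}(ii), and the induced functors on Hom-categories are equivalences by Proposition \ref{Deligne_eq} and Corollary \ref{relative_Deligne_module_functors} together with exactness of the relevant modules (Theorem \ref{strong_Morita}). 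The genuinely pivotal point is that $\Psi$ intertwines the pivotal structures of the two bicategories: the pivotal structure \eqref{pivotal_structure_bicategory_of_pivotal_modules} of $\mathbf{Mod}^{\mathrm{piv}}(\cala)$ at a module functor $G\colon\caln_1\To\caln_2$ is built from the pivotal module structures $\widetilde{p}_{\caln_1},\widetilde{p}_{\caln_2}$ and the isomorphism $\Lambda_G$ of Proposition \ref{Serre_and_module_functors}, and I would verify that applying $\Psi$ and then the pivotal structure of $\mathbf{Mod}^{\mathrm{piv}}(\overline{\dualcat})$ --- again of the form \eqref{pivotal_structure_bicategory_of_pivotal_modules}, now built from the structures \eqref{FunMN_piv} on the categories $\Fun_\cala(\calm,\caln_i)$ --- gives the same $2$-morphism; using the naturality of $\Lambda$ and its compatibility \eqref{Serre_compatibility_composition} with composition, this would reduce to a diagram chase controlled by the defining coherence \eqref{condition_pivotal_module} of the pivotal module structures.

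For the converse, given an equivalence of pivotal bicategories $\Phi\colon\mathbf{Mod}^{\mathrm{piv}}(\cala)\xrightarrow{\;\simeq\;}\mathbf{Mod}^{\mathrm{piv}}(\calb)$, I would apply $\Phi$ to the regular module ${}_\cala\cala$ with its pivotal module structure $p\colon\id_\cala\Rightarrow(-)\dd=\Se_\cala^\cala$ and set $\calm:=\Phi({}_\cala\cala)$, a pivotal $\calb$-module. Restricting $\Phi$ to endomorphism categories yields an equivalence of monoidal categories $\End_{\mathbf{Mod}^{\mathrm{piv}}(\cala)}({}_\cala\cala)\simeq\End_{\mathbf{Mod}^{\mathrm{piv}}(\calb)}(\calm)$, i.e.\ $\cala^*_\cala\simeq\calb^*_\calm$ (the two endomorphism categories being $\Funre_\cala(\cala,\cala)$ and $\Funre_\calb(\calm,\calm)$ by exactness); and since $\Phi$ respects the bicategorical pivotal structures, which restrict by construction of \eqref{pivotal_structure_bicategory_of_pivotal_modules} to the pivotal structures of Proposition \ref{pivotal_dual_category} on $\cala^*_\cala$ and $\calb^*_\calm$, this is an equivalence of pivotal tensor categories. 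Composing with the pivotal equivalence $\cala\simeq\overline{\cala^*_\cala}$ of Proposition \ref{eq_relation}(i), and passing to monoidal opposites (which preserves pivotality by \eqref{opp_pivotal}), I would obtain a pivotal tensor equivalence $\cala\simeq\overline{\calb^*_\calm}$. Thus $\calb$ and $\cala$ are pivotal Morita equivalent via the pivotal $\calb$-module $\calm$ in the sense of Definition \ref{PME}, and by the symmetry recorded above so are $\cala$ and $\calb$.

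The step I expect to be the main obstacle is the pivotal-compatibility verification at the end of the second paragraph, i.e.\ showing that $\Psi=\Fun_\cala(\calm,-)$ transports the bicategorical pivotal structure \eqref{pivotal_structure_bicategory_of_pivotal_modules} on $\mathbf{Mod}^{\mathrm{piv}}(\cala)$ to the corresponding one on $\mathbf{Mod}^{\mathrm{piv}}(\overline{\dualcat})$; everything else is either classical categorical Morita theory or routine bookkeeping with transport of structure.
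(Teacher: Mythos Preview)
Your proposal is correct and follows essentially the same strategy as the paper: for the forward direction you build the $2$-equivalence $\Psi=\Fun_\cala(\calm,-)$ and verify it intertwines the bicategorical pivotal structures via the compatibility \eqref{Serre_compatibility_composition}, and for the converse you evaluate a pivotal biequivalence on the regular module and identify the induced pivotal tensor equivalence on endomorphism categories with the pivotal structure of Proposition \ref{pivotal_dual_category}. The only cosmetic difference is that in the converse you take $\Phi$ in the direction $\cala\to\calb$ rather than $\calb\to\cala$, which forces the extra appeal to symmetry at the end; the paper avoids this by sending ${}_\calb\calb$ into $\mathbf{Mod}^{\mathrm{piv}}(\cala)$ directly, obtaining a pivotal $\cala$-module $\calm$ and a pivotal equivalence $\calb\simeq\overline{\dualcat}$ without needing to invoke symmetry.
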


\begin{proof}
Given a pivotal $\cala$-module $\calm$, according to Lemma \ref{pivotal_FunM}(ii)
for every $\caln\iN\textbf{Mod}^{\text{piv}\!}(\cala)$ the $\overline{\dualcat}$-mo\-dule 
$\Fun_\cala(\calm,\caln)$ is endowed with a pivotal structure. By Theorem
7.12.16 of \cite{EGno} this assignment extends to a $2$-equivalence
  \be
  \begin{aligned}
  \Psi\Colon \textbf{Mod}^{\text{piv}\!}(\cala) & 
  \longrightarrow\textbf{Mod}^{\text{piv}\!} (\overline{\dualcat}) \,,
  \nxl1
  \caln & \longmapsto \Fun_\cala(\calm,\caln) \,.
  \end{aligned}
  \ee
Moreover, $\Psi$ preserves the pivotal structure: To a $1$-morphism 
$H \colon {}_{\cala\,}\caln_1 \Rarr{} {}_{\cala\,}\caln_2$ in 
$\textbf{Mod}^{\text{piv}\!}(\cala)$ it assigns 
  \be
  \Psi(H)\Colon \Fun_\cala(\calm,\caln_1) \xrightarrow{\,H\cir-\;}
  \Fun_\cala(\calm,\caln_2) \,.
  \ee
The component at $\Psi(H)$ of the pivotal structure of
$\textbf{Mod}^{\text{piv}\!}(\overline{\dualcat})$ is the composite
  \be
  \textbf{P}_{\!\Psi(H)}^{}\Colon (H\cir-) \xRightarrow{\;\id\cir\widehat{p}_1~}
  H\cir (-)\rra \xRightarrow{~\cong~\,} (H\lla\cir-)\rra
  \xRightarrow{\;(\widehat{p}_2)^{-1}{\circ}\,\id~\,} (H\lla\cir-) \,,
  \ee
where, for $i \eq 1,2$, $\widehat{p}_i$ is the pivotal structure of the module 
category $\Fun_\cala(\calm,\caln_i)$ given by \eqref{FunMN_piv}.
The task at hand is to verify that the diagram
  \be
  \begin{tikzcd}
  \Psi(H)\ar[rr,"\textbf{P}_{\Psi(H)}^{}",Rightarrow]
  \ar[dr,"\Psi(\textbf{P}_{H})",Rightarrow,swap]&&\Psi(H)\lla\ar[dl,"\cong",Rightarrow]
  \\
  &\Psi(H\lla)&
  \end{tikzcd}
  \ee
commutes for every $1$-morphism $H$ in $\textbf{Mod}^{\text{piv}\!}(\cala)$. By inserting
the definitions, this diagram translates to
  \be
  \begin{tikzcd}[row sep=3.2em,column sep=3.6em]
  (H\cir-) \ar[r,"\id\cir\widetilde{p}_{\caln_1}{\circ}\,\id",Rightarrow]
  \ar[d,"\id\cir\widetilde{p}_{\caln_1}{\circ}\,\id",swap,Rightarrow]
  & (H\cir\Se^\cala_{\caln_1}\cir -) \ar[dl,"\id",Rightarrow]
  \ar[r,"\eqref{Serre_module_functor}",Rightarrow]
  & H\cir(-)\rra\cir\Se^\cala_{\calm}
  \ar[r,"\id\cir(\widetilde{p}_{\calm})^{-1}",Rightarrow]
  \ar[ddr,swap,"\,\cong",Rightarrow]
  & H\cir(-)\rra\ar[d,Rightarrow,"\cong"]
  \\
  (H\cir\Se^\cala_{\caln_1}\cir -)
  &~&~& (H\lla\circ-)\rra \ar[d,Rightarrow,"\id\cir\widetilde{p}_{\calm}"]
  \\
  (\Se^\cala_{\caln_2}\cir H\lla\cir -) \ar[u,Rightarrow,"\eqref{Serre_module_functor}"]
  & (H\lla\circ-) \ar[l,Rightarrow,"\widetilde{p}_{\caln_2}{\circ}\,\id"]
  \ar[r,Rightarrow,swap,"\widetilde{p}_{\caln_2}\cir\id"]
  & \Se^\cala_{\caln_2}\cir(H\lla\cir-) \ar[r,swap,Rightarrow,"\eqref{Serre_module_functor}"]
  \ar[ull,"\eqref{Serre_module_functor}",swap,Rightarrow]
  & (H\lla\cir-)\rra\cir\Se^\cala_\calm
  \end{tikzcd}
  \ee
This is indeed a commutative diagram: The pentagon in the middle commutes 
owing to the compatibility \eqref{Serre_compatibility_composition}, and the 
triangle and squares in the periphery commute trivially.
 \\[2pt]
To see the converse implication, consider a pivotal $2$-equivalence
  \be
  \Phi\Colon \textbf{Mod}^{\text{piv}\!}(\calb) \xrightarrow{~\simeq~}
  \textbf{Mod}^{\text{piv}}(\cala) \,.
  \ee
Define $\calm$ as the image of the regular pivotal module ${}_\calb\calb$ under 
$\Phi$. Since $\Phi$ is a $2$-equivalence, we obtain an equivalence 
  \be
  \Phi_{\calb,\calb}\Colon \calb_\calb^*=\Fun_\calb(\calb,\calb)
  \xrightarrow{\;\simeq\;} \Fun_\cala(\calm,\calm) = \dualcat
  \ee
of categories.
In addition, amongst the data of the $2$-functor $\Phi$ there is a natural isomorphism 
 \be
  \begin{tikzcd}[row sep=3.9em,column sep=3.2em]
  \Fun_\calb(\calb,\calb) \,{\times}\,\Fun_\calb(\calb,\calb) \ar[d,"\Phi\times\Phi",swap]
  \ar[r,"\circ"]
  & \Fun_\calb(\calb,\calb) \ar[dl,"\gamma",Rightarrow] \ar[d,"\Phi"]
  \\
  \Fun_\cala(\calm,\calm)\,{\times}\,\Fun_\cala(\calm,\calm) \ar[r,"\circ",swap]
  & \Fun_\cala(\calm,\calm)
  \end{tikzcd}
  \ee
whereby $\Phi_{\calb,\calb}$ is endowed with a tensor structure. Furthermore, 
$\Phi_{\calb,\calb}$ is pivotal. To see this, notice that an object $F\iN\dualcat$ 
is a $1$-morphism in $\textbf{Mod}^{\text{piv}\!}(\cala)$. Now on the one hand the 
pivotal structure \eqref{pivotal_dualcat} of the tensor category $\dualcat$ provides
an isomorphism $q_F \colon F \,{\xRightarrow{\;\cong~}}\, F\lla$, while on the other hand
the pivotal structure \eqref{pivotal_structure_bicategory_of_pivotal_modules} 
in the bicategory $\textbf{Mod}^{\text{piv}\!}(\cala)$ for $F$ is an isomorphism 
$\textbf{P}_{\!F}^{}\colon F\,{\xRightarrow{\;\cong~}}\, F\lla$, and in fact it
coincides with $q_F$. The same argument holds for $\calb_\calb^*$; since 
$\Phi$ preserves $\textbf{P}$, it follows that $\Phi_{\calb,\calb}$ is a pivotal 
equivalence. We have thus obtained an equivalence 
  \be
  \calb \xrightarrow{\;\eqref{reflexivity_PME}~}
  \overline{\calb_\calb^*} \xrightarrow{\;\Phi_{\calb,\calb}~} \overline{\dualcat}
  \ee
of pivotal tensor categories; hence $\cala$ and $\calb$ are pivotal Morita equivalent.
\end{proof}

\begin{rem}
Theorem \ref{bicategories_pivotal_modules} implies that pivotal Morita equivalence 
is indeed an equivalence relation on pivotal tensor categories. Proposition 
\ref{eq_relation} already shows reflexivity and symmetry.
\end{rem}


\subsection{Pivotality of the center and pivotal Morita equivalence}

Recall that the \emph{Drinfeld center} of a tensor category $\cala$ is  the
braided tensor category $\calz(\cala)$ whose objects are pairs $(a,\sigma)$ 
consisting of an object $a\iN\cala$ and a \emph{half-braiding} $\sigma$, i.e.\
a natural isomorphism $\sigma_{b,a} \colon b\,{\otimes}\, a\xcong a\,{\otimes}\, b$ 
for $b\iN\cala$ obeying the appropriate hexagon axiom. According to 
\Cite{Prop.\,8.10.10}{EGno} the Drinfeld center $\calz(\cala)$ is unimodular. A pivotal 
structure $p_a \colon a\xcong a\dd$ on $\cala$ induces a pivotal structure on 
$\calz(\cala)$ via $p_{(a,\sigma)} \,{:=}\, p_a$ \Cite{Ex.\,7.13.6}{EGno}. 

The following result relates the Drinfeld center and module categories: Given 
a module category $\calm$ over a tensor category $\cala$ there is a braided 
equivalence $\calz(\cala)\,{\simeq}\, \calz(\overline{\dualcat})$. Explicitly it can
be given, without making a choice of an algebra in $\cala$, as \Cite{Thm.\,3.13}{Sh2}
  \be
  \Sigma\Colon \calz(\cala)\xsimeq \calz(\,\overline{\dualcat}\,), \qquad
  (a,\sigma) \longmapsto (a\act-,\gamma) \,,
  \label{schauenburg}
  \ee
where $\sigma$ endows $a\act-$ with an $\cala$-module functor structure, and for 
$F\iN\overline{\dualcat}$ the half-braiding $\gamma_{\,F,a\Act-} \colon a\act F
\Rarr\cong F(a\act-)$ is given by the module functor structure of $F$.

\begin{lem}\label{lemma1}
Let $\cala$ be a finite tensor category and $\calm$ an $\cala$-module, and let
$(a,\sigma)\iN\calz(\cala)$.
\begin{enumerate}[\rm (i)]
    \item 
There is a natural isomorphism $\iota_a\colon a\dd\xcong\ldd a$ in $\calz(\cala)$.
    \item
The diagram 
  \be
  \begin{tikzcd}[row sep=3.3em,column sep=1.9em]
  a\dd\act\Se^\cala_\calm \ar[dr,swap,"\eqref{Serre_twisted}",Rightarrow]
  \ar[rr,"\iota_a\act\id",Rightarrow] 
  &~& \ldd a\act\Se^\cala_\calm \ar[dl,"\eqref{Serre_module_functor}",Rightarrow]
  \\
  ~& \Se^\cala_\calm(a\act-) &~
  \end{tikzcd}
  \ee
commutes, where the natural isomorphism \eqref{Serre_module_functor} is applied 
to the module functor $a\act-$.
\end{enumerate}
\end{lem}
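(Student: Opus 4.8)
Here is my plan. The plan is to exploit the fact that the half-braiding $\sigma$ is precisely the datum that promotes the endofunctor $a\act-$ of any left $\cala$-module to an $\cala$-module functor, so that both \eqref{Serre_twisted} and \eqref{Serre_module_functor} may be applied to it. I would begin by recording a few preliminary observations valid for any $(a,\sigma)\iN\calz(\cala)$ and any left $\cala$-module $\calm$: the functor $a\act-\colon\calm\To\calm$ is an $\cala$-module functor, with constraint $a\act(c\act m)\xrightarrow{\sigma_{c,a}\Act\id}c\act(a\act m)$; likewise ${}^\vee a\act-$ and $\ldd a\act-$ carry $\cala$-module structures, namely the half-braidings exhibiting them as the iterated left duals of $(a,\sigma)$ in $\calz(\cala)$; and a short manipulation with the (co)evaluation morphisms gives $(a\act-)\ra\cong{}^\vee a\act-$, hence $(a\act-)\rra\cong\ldd a\act-$, as $\cala$-module functors. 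I would also note that for every $m\iN\calm$ the functor $-\act m\colon\cala\To\calm$ is an $\cala$-module functor with $(-\act m)\ra\cong\iHom_\calm^\cala(m,-)$ and, by the realization \eqref{eq:relSerre4M}, $(-\act m)\rra\cong(-\act\Se_\calm^\cala(m))$, so that the isomorphism $\Lambda_{-\Act m}$ of Proposition \ref{Serre_and_module_functors} is exactly \eqref{Serre_twisted}.

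For part (i) I would specialize to the regular bimodule $\calm\eq\cala$, where $\Se_\cala^\cala\cong(-)\dd$. Proposition \ref{Serre_and_module_functors} applied to the $(\cala,\cala)$-bimodule functor $a\ot-$ then yields a natural isomorphism $\Lambda_{a\ot-}\colon(-)\dd\cir(a\ot-)\xRightarrow{~\cong~}(\ldd a\ot-)\cir(-)\dd$ of twisted bimodule functors, i.e.\ a coherent family $(a\ot c)\dd\xcong\ldd a\ot c\dd$. Since $\Lambda_{a\ot-}$ is an isomorphism of bimodule functors it is compatible with the right $\cala$-action, up to the twist, and is therefore determined by its value at $\un$; composing that value with the canonical $\un\dd\cong\un$ produces an isomorphism, which I take as $\iota_a\colon a\dd\xcong\ldd a$. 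Tracking the module-functor constraints appearing in $\Lambda_{a\ot-}$ -- they are the half-braidings of $a$, ${}^\vee a$ and $\ldd a$ in $\calz(\cala)$ -- then shows that $\iota_a$ intertwines the half-braidings of $a\dd$ and $\ldd a$, so it is an isomorphism in $\calz(\cala)$; naturality in $(a,\sigma)$ is inherited from the naturality of $\Lambda$ in the module functor $a\ot-$.

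For part (ii) I would invoke the compatibility of $\Lambda$ with composition, Equation \eqref{Serre_compatibility_composition}. For fixed $m\iN\calm$ one has the equality of $\cala$-module functors
  \be
  (a\act-)\cir(-\act m) \eq (-\act m)\cir(a\ot-)\Colon \cala\To\calm \,,
  \ee
both sides sending $c$ to $a\act(c\act m)$ with constraint $\sigma_{-,a}\act\id$. Feeding each of these two decompositions into \eqref{Serre_compatibility_composition} expresses $\Lambda$ of the common composite in two ways: on one side the factors are $\Lambda_{a\Act-}$ (the instance of \eqref{Serre_module_functor} occurring in the diagram) and $\Lambda_{-\Act m}$, which is \eqref{Serre_twisted}; on the other they are $\Lambda_{-\Act m}=$\eqref{Serre_twisted} again and $\Lambda_{a\ot-}$, which by part (i) is, under the identification $(a\ot-)\dd\cong a\dd\ot(-)\dd$, the family $\iota_a\ot\id$. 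Cancelling the common factor $\Lambda_{-\Act m}$ -- using the coherence of iterated adjoints to match the two canonical identifications of $((a\act-)\cir(-\act m))\rra$ -- and evaluating the resulting identity of natural transformations at $\un\iN\cala$ gives exactly $\iota_a\act\id_{\Se_\calm^\cala(m)}=\Lambda_{a\Act-}(m)\cir(\text{\eqref{Serre_twisted}})^{-1}$, which is the commutativity asserted in (ii). The step I expect to be the main obstacle is the bookkeeping of all these module-functor structures and twists: one must make sure that the $\cala$-module structure that taking adjoints puts on $(a\act-)\rra$ really is the half-braiding of $\ldd a$ in $\calz(\cala)$, and that the twists accompanying \eqref{Serre_twisted}, \eqref{Serre_module_functor} and \eqref{Serre_compatibility_composition} are threaded consistently; once these identifications are pinned down, both parts fall out.
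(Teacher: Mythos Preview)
Your proposal is correct and takes a genuinely different route from the paper's proof.

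The paper defines $\iota_a$ \emph{explicitly} via the Radford isomorphism: it is the morphism obtained from the composite
\[
  a\dd \otimes \DD_\Cala \xrightarrow{\;\eqref{Radford}\;} \DD_\Cala \otimes \ldd a
  \xrightarrow{\;\ldd\sigma\;} \ldd a \otimes \DD_\Cala
\]
after cancelling $\DD_\Cala$. For part (ii) the paper then unfolds this definition together with the relation $\Se_\calm^\cala \cong \DD_\Cala\act\Nakr$ from \eqref{Nakayama_Serre}, producing a large diagram built out of the Radford isomorphism, the twisted structure \eqref{Nakayama_twisted_functor} of the Nakayama functor, and the half-braiding $\ldd\sigma$; commutativity is then checked piece by piece.

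You instead define $\iota_a$ \emph{abstractly} as $\Lambda_{a\ot-}(\un)$ for the regular module, and derive (ii) by applying the compatibility square \eqref{Serre_compatibility_composition} to the two factorizations $(a\act-)\cir(-\act m) = (-\act m)\cir(a\ot-)$. Once one checks that $\Lambda_{-\act m}$ really is \eqref{Serre_twisted} (the paper itself makes this identification in the proof of Proposition \ref{Serre_and_module_functors}) and that $\Lambda_{a\ot-} = \iota_a\ot\id_{(-)\dd}$ via the right-module compatibility, the triangle in (ii) falls out with essentially no further computation.

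What each approach buys: yours stays entirely within the Serre-functor calculus of Proposition \ref{Serre_and_module_functors} and never touches the Nakayama functor or $\DD_\Cala$, making (ii) almost tautological from the definition of $\iota_a$. The paper's approach, by contrast, makes the connection between $\iota_a$ and the Radford isomorphism \eqref{Radford} completely explicit, which is thematically important given the paper's emphasis on Radford-type theorems (Theorems \ref{thm:Radford} and \ref{Thm_Radford_pseudo}); the explicit formula also shows directly how the distinguished invertible object enters. The bookkeeping you flag---that the adjunction-induced module structure on $(a\act-)\rra$ coincides with the half-braiding of $\ldd a$, and that $\iota_a$ so defined lies in $\calz(\cala)$---is real but routine, and the paper's proof is comparably terse on the analogous point for its $\iota_a$.
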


\begin{proof}
The desired isomorphism in (i) comes from the composition
  \be
  a\dd \,{\otimes}\, \DD_\Cala\xrightarrow{~\eqref{Radford}~}
  \DD_\Cala \,{\otimes}\, \ldd a \xrightarrow{\;\ldd\sigma~}\ldd a \,{\otimes}\, \DD_\Cala\,.
  \ee
Now denote by $F_a \,{:=}\, a\act-$ the module functor induced by $(a,\sigma)$. Its 
double adjoints are $F_a\lla \eq a\dd\act-$ and $F_a\rra \eq \ldd a\act-$. Assertion 
(ii) is thus implied by the commutativity of the diagram
  \be
  \begin{tikzcd}[row sep=2.5em,column sep=2.4em]
  a\dd\Act\Se^\cala_\calm \ar[dd,swap,"\eqref{Serre_twisted}",Rightarrow]
  \ar[rrrr,Rightarrow,"\iota_a\act\id"] \ar[dr,Rightarrow,"\eqref{Nakayama_Serre}",swap]
  &~&~&~& \ldd a\Act\Se^\cala_\calm \ar[dd,Rightarrow,"\,\eqref{Serre_module_functor}"]
  \\
  ~& a\dd \,{\otimes}\, \DD_\Cala\Act\Nakr \ar[r,Rightarrow,"~\,\eqref{Radford}~~"]
  & \DD_\cala \,{\otimes}\, \ldd a\Act\Nakr \ar[r,Rightarrow,"~\ldd\sigma~~"]
  \ar[d,Rightarrow,"\,\eqref{Nakayama_twisted_functor}"]
  & \ldd a \,{\otimes}\, \DD_\cala\Act\Nakr\ar[ur,Rightarrow,swap,"\eqref{Nakayama_Serre}"] &~
  \\[.3em]
  \Se^\cala_\calm(a\Act-)\ar[rr,Rightarrow,"\eqref{Nakayama_Serre}",swap]
  &~& \DD_\cala\Act\Nakr(a\Act-) \ar[rr,Rightarrow,"\eqref{Nakayama_Serre}",swap]
  &~& \Se^\cala_\calm(a\Act-)
  \end{tikzcd}
  \ee
where \eqref{Nakayama_twisted_functor} and \eqref{Serre_module_functor} are applied
to the functor $F_a$. The pentagon at the top of this diagram commutes owing to 
naturality and the definition of $\iota_a$. Commutativity of the pentagon on the left 
is the condition that \eqref{Nakayama_Serre} is an isomorphism of twisted module 
functors. Similarly, the pentagon on the right is secretly the diagram
  \be
  \begin{tikzcd}[row sep=2.6em,column sep=3.2em]
  F_a\rra\,{\circ}\,\DD_\cala\act\Nakr\ar[r,Rightarrow,"\eqref{Nakayama_Serre}~"]
  \ar[d,Rightarrow,swap,"\eqref{Nakayama_twisted_functor}\cir \sigma\,"]
  & F_a\rra\,{\circ}\,\Se_\calm^\cala \ar[d,Rightarrow,"\,\eqref{Serre_module_functor}"]
  \\
  \DD_\cala\act\Nakr\,{\circ}\,F_a \ar[r,swap,Rightarrow,"\eqref{Nakayama_Serre}"]
  & \Se_\calm^\cala\cir F_a
  \end{tikzcd}
  \ee
which again commutes because \eqref{Nakayama_Serre} is an isomorphism of twisted 
bimodule functors.
\end{proof}

\begin{prop}\label{schauenburg_pivotal}
For any pivotal $\cala$-module category $\calm$ the braided equivalence $\Sigma$
between $\calz(\cala)$ and $\calz(\,\overline{\dualcat}\,)$ defined in
\eqref{schauenburg} is pivotal.
\end{prop}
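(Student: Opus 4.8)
The plan is to verify directly that $\Sigma$ intertwines the pivotal structure $p_{(a,\sigma)} \eq p_a$ on $\calz(\cala)$ — induced by the pivotal structure $p$ of $\cala$ — with the pivotal structure $\overline{q}_{(F,\beta)} \eq \overline{q}_F$ on $\calz(\overline{\dualcat})$, where $\overline{q}$ is the pivotal structure \eqref{pivotal_dualcat1} of $\overline{\dualcat}$ associated with the pivotal module $\calm$. Since $\Sigma$ is already a braided monoidal equivalence, it is canonically compatible with the dualities on the two centers, so it suffices to show that for every $(a,\sigma)\iN\calz(\cala)$ the image $\Sigma(p_{(a,\sigma)})$ agrees with $\overline{q}_{\Sigma(a,\sigma)}$ under the canonical isomorphism relating $\Sigma\big((a,\sigma)\dd\big)$ and $\Sigma(a,\sigma)\dd$. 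Writing $F_a \eq a\act{-}$, so that $\Sigma(a,\sigma) \eq (F_a,\gamma)$ and $F_a\rra \eq \ldd a\act{-}$, the claim reduces to one commuting triangle in $\overline{\dualcat}$ per object.

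First I would unwind $\overline{q}_{F_a}$: by \eqref{pivotal_dualcat1} it is the composite $F_a \xRightarrow{\widetilde{p}\,\circ\,\id} \Se_\calm^\cala\cir F_a \xRightarrow{\eqref{Serre_module_functor}} F_a\rra\cir\Se_\calm^\cala \xRightarrow{\id\,\circ\,\widetilde{p}^{-1}} F_a\rra$. Evaluating at $m\iN\calm$, I would substitute the pivotality condition \eqref{condition_pivotal_module} of $\widetilde{p}$, which expresses $\widetilde{p}_{a\Act m}$ as $p_a\act\widetilde{p}_m$ post-composed with the inverse of the twist \eqref{Serre_twisted}, and then apply Lemma \ref{lemma1}(ii) to trade that twist for the isomorphism \eqref{Serre_module_functor} applied to $F_a$, at the cost of $\iota_a\act\id$. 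After this the two occurrences of $\widetilde{p}$ cancel, leaving $(\overline{q}_{F_a})_m \eq (\iota_a\cir p_a)\act\id_m$; that is, $\overline{q}_{F_a} \eq (\iota_a\cir p_a)\act{-}$ as a natural transformation $a\act{-}\Rightarrow\ldd a\act{-}$, with $\iota_a\colon a\dd\xcong\ldd a$ the isomorphism of Lemma \ref{lemma1}(i).

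Then I would note that $\Sigma$ carries the duality datum $p_{(a,\sigma)}\colon (a,\sigma)\To(a,\sigma)\dd$, whose underlying morphism is $p_a\colon a\To a\dd$, to the natural transformation $p_a\act{-}\colon F_a\To a\dd\act{-}$, and that the canonical comparison isomorphism between $\Sigma\big((a,\sigma)\dd\big)$, with underlying functor $a\dd\act{-}$, and $\Sigma(a,\sigma)\dd$, with underlying functor $F_a\rra \eq \ldd a\act{-}$, has underlying natural isomorphism $\iota_a\act{-}$ — this is precisely what Lemma \ref{lemma1}(i) is for, $\iota_a$ being built from the Radford isomorphism of $\cala$. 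Combined with the previous paragraph this gives $\overline{q}_{\Sigma(a,\sigma)} \eq (\iota_a\act{-})\cir(p_a\act{-})$, which is exactly the image of $p_{(a,\sigma)}$ under $\Sigma$; hence $\Sigma$ is pivotal.

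The hard part will be the bookkeeping rather than anything conceptual. One has to keep the directions of left and right duals straight in $\overline{\dualcat}$, where they are interchanged relative to $\dualcat$, so that the double dual of $a\act{-}$ is the double right adjoint $\ldd a\act{-}$ and not the double left adjoint; and one must check that the canonical comparison isomorphism between the two realizations of the double dual of $\Sigma(a,\sigma)$ is genuinely $\iota_a\act{-}$ for the $\iota_a$ of Lemma \ref{lemma1}(i) — this uses that $\Sigma$ is braided and hence respects the duality isomorphisms of the centers. Granting these identifications, the only real computation is the cancellation in the second paragraph, which is the diagram of Lemma \ref{lemma1}(ii) glued onto the pivotality square \eqref{condition_pivotal_module}; the compatibility \eqref{Serre_compatibility_composition} is what ensures, in the background, that $\overline{q}$ is a well-defined pivotal structure in the first place.
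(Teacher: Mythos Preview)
Your proposal is correct and follows essentially the same route as the paper: both arguments reduce to combining Lemma~\ref{lemma1}(ii) with the pivotality condition \eqref{condition_pivotal_module} so that the two occurrences of $\widetilde p$ in $\overline q_{F_a}$ cancel against $p_a\act{-}$. The only difference is presentational: the paper absorbs $\iota_a$ directly via Lemma~\ref{lemma1}(ii), writing the target of \eqref{pivotal_Sigma} as $a\dd\act{-}$ rather than $\ldd a\act{-}$, and then reads off $\Sigma(p_a)=\eqref{pivotal_Sigma}$ from \eqref{condition_pivotal_module}, whereas you keep $\iota_a$ visible as the comparison isomorphism; your warning about the bookkeeping with $\iota_a$ is apt, but the paper treats that identification as implicit in Lemma~\ref{lemma1}.
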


\begin{proof}
{}From Lemma \ref{lemma1} it follows that the pivotal structure in 
$\calz(\overline{\dualcat})$ for the object $\Sigma(a,\sigma)$ is given by the composite
  \be
  \label{pivotal_Sigma}
  a\act- \xRightarrow{~\widetilde{p}\cir\id~\,} \Se_\calm^\cala(a\act-)
  \xRightarrow{~\eqref{Serre_twisted}~\,} a\dd{\Act}\,\Se_\calm^\cala
  \xRightarrow{~\id\cir(\,\widetilde{p}\;)^{-1}\;}(a\dd\act-) \,.
  \ee
The condition \eqref{condition_pivotal_module} on $\widetilde{p}$ means that
$\Sigma(p_a)$ coincides with the morphism \eqref{pivotal_Sigma}, and thus the 
assertion holds.
\end{proof}

Proposition \ref{schauenburg_pivotal} immediately implies

\begin{thm}\label{thm:A=B-ZA=ZB}
If two pivotal categories $\cala$ and $\calb$ are pivotal Morita equivalent, 
then their Drinfeld centers $\calz(\cala)$ and $\calz(\calb)$ are equivalent 
as pivotal braided tensor categories.
\end{thm}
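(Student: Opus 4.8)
The plan is to reduce the statement to Proposition \ref{schauenburg_pivotal} together with the functoriality of the Drinfeld center under tensor equivalences. By Definition \ref{PME}, pivotal Morita equivalence of $\cala$ and $\calb$ provides a pivotal $\cala$-module category $\calm$ and a pivotal equivalence $G\colon\calb\xsimeq\overline{\dualcat}$ of pivotal tensor categories.

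First I would invoke Proposition \ref{schauenburg_pivotal}: the Schauenburg equivalence $\Sigma\colon\calz(\cala)\xsimeq\calz(\,\overline{\dualcat}\,)$ of \eqref{schauenburg} is an equivalence of pivotal braided tensor categories. Next I would observe that applying the Drinfeld center to $G$ produces a braided tensor equivalence $\calz(G)\colon\calz(\calb)\xsimeq\calz(\,\overline{\dualcat}\,)$ which is again pivotal. Indeed, the center is functorial for tensor equivalences, with $\calz(G)$ sending $(b,\sigma)$ to $G(b)$ equipped with the half-braiding transported along $G$; it is automatically braided; and since the pivotal structure on a Drinfeld center is inherited from the underlying category through $p_{(a,\sigma)}=p_a$, the hypothesis that $G$ is pivotal translates directly into the statement that $\calz(G)$ is pivotal. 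Composing $\Sigma$ with a quasi-inverse of $\calz(G)$ then yields the sought equivalence $\calz(\cala)\xsimeq\calz(\calb)$ of pivotal braided tensor categories.

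The genuinely substantial input is Proposition \ref{schauenburg_pivotal}, whose proof has already been supplied; everything else amounts to bookkeeping. The single point that still needs checking is that $\calz(-)$ carries pivotal tensor equivalences to pivotal braided equivalences, which I expect to be the only, and rather minor, obstacle: it is a routine unwinding of the definition of a pivotal functor, using that the pivotal structure of a center is given by $p_{(a,\sigma)}=p_a$.
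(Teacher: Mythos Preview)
Your proposal is correct and follows essentially the same approach as the paper, which simply states that the theorem is an immediate consequence of Proposition~\ref{schauenburg_pivotal}. You have merely made explicit the routine step of transporting along the pivotal tensor equivalence $G\colon\calb\xsimeq\overline{\dualcat}$ from Definition~\ref{PME}, which the paper leaves implicit.
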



\subsection{Sphericality of module categories}\label{sec:spherical}

{\Color
A notion of sphericality for pivotal tensor categories that is defined through 
the Radford isomorphism \eqref{Radford} has been studied, under the assumption of 
unimodularity, in \cite{DSPS}. In the semisimple case this notion is equivalent to 
trace-sphericality \Cite{Prop.\,3.5.4}{DSPS}, i.e.\ to the property that right 
and left traces coincide. 
Let $\calc$ be a unimodular finite tensor category. The monoidal functor given 
by conjugation with $\DD_\Calc$ can be canonically identified with the identity 
functor of $\calc$: Consider any trivialization 
$\textbf{u}_\calc\colon\textbf{1}\xcong\DD_\Calc^{-1}$
of the distinguished invertible object. Then
  \be
  \id_\calc\xRightarrow{\textbf{u}_\calc^*\otimes\id\otimes\textbf{u}_\calc^{}}\DD_\Calc^{}\otimes-\otimes\DD_\Calc^{-1}
  \label{conj_trivial}
  \ee
does not depend of the choice of $\textbf{u}_\calc$ since $\textbf{1}$ is simple 
and thus $\Hom_\calc(\textbf{1},\DD_\Calc)$ is one-dimensional. By precomposing 
the Radford isomorphism \eqref{Radford} with \eqref{conj_trivial}, we obtain a 
canonical monoidal isomorphism
  \be
  \overline{r}_\Calc^{}\Colon \id_\calc\xNatiso (-)^{\vee\vee\vee\vee} 
  \label{Radford_can}
  \ee
that trivializes the fourth power of the right dual functor.
\begin{defi}\label{spherical_tc}
\Cite{Def.\,3.5.2}{DSPS} A unimodular pivotal tensor category $\calc$ is called 
\emph{spherical} iff the diagram 
  \be
  \begin{tikzcd}[row sep=2.5em,column sep=1.8em]
  \id_\calc \ar[rr,"\overline{r}_\calc^{}",Rightarrow] \ar[rd,"p",swap,Rightarrow] &~
  & (-)^{\vee\vee\vee\vee}
  \\
  ~& (-)\dd \ar[ru,"\!p\dd",swap,Rightarrow] &~
  \end{tikzcd}
  \ee
commutes, where $p$ is the pivotal structure of $\calc$.
\end{defi}    

It turns out that sphericality is a property on unimodular pivotal tensor 
categories that is invariant under pivotal Morita equivalence, as we show next.

\begin{thm}\label{unimod_dualcat}
Let $\calm$ be a pivotal module category over a $($unimodular$)$ spherical 
tensor category $\cala$. Then the following statements hold:
\Enumeratei
    \item The dual tensor category $\dualcat$ is unimodular.
    \item The pivotal structure \eqref{pivotal_dualcat} induced on the dual tensor category $\dualcat$ is spherical.
\end{enumerate}    
\end{thm}

\begin{proof}
By Proposition \ref{distinguished_dualcat}, the distinguished invertible object 
of $\dualcat$ is $\DD_\Dualcat^{-1} \,{\cong}\,\DD_\cala^{-1}\act({\Se_\Calm})^2$.
Any trivialization $\textbf{u}_\cala\colon\textbf{1}\xcong\DD_\Cala^{-1}$ of 
the distinguished invertible object of $\cala$ furnishes an $\cala$-module 
natural isomorphism $\DD_\Dualcat^{-1} \,{\cong}\,({\Se_\Calm})^2$, where 
$({\Se_\Calm})^2$ is endowed with the module structure
  \be
  \left(\Se_\calm\right)^2(a\act m)\xrightarrow{~\eqref{Serre_twisted}^2~}
a^{\!\vee\vee\vee\vee}\act\left(\Se_\calm\right)^2(m)\xleftarrow{~\;\overline{r}_\Cala\act\id\;~}
  a\act \left(\Se_\calm\right)^2(m)\;.
  \label{eq:module_str1}
  \ee
Now the square of the pivotal structure $\tilde{p}$ of $\calm$ trivializes 
	$({\Se_\Calm})^2$ with module structure
  \be
  \left(\Se_\calm\right)^2(a\act m)\xrightarrow{~\eqref{Serre_twisted}^2~}
  a^{\!\vee\vee\vee\vee}\act\left(\Se_\calm\right)^2(m)\xleftarrow{~\;p\dd{\circ}\,p\act\id\;~}
  a\act \left(\Se_\calm\right)^2(m)\;,
  \label{eq:module_str2}
  \ee
by the defining condition \eqref{condition_pivotal_module} of $\widetilde{p}$. 
Sphericality of $\cala$ ensures that the two module structures \eqref{eq:module_str1} 
and \eqref{eq:module_str2} on $\left(\Se_\calm\right)^2$ coincide. And thus, the 
composition
  \be
  {}{\textbf{\normalsize u}_\dualcat}\Colon
  \id_\calm\xRightarrow{~\;\tilde{p}\cir \tilde{p}\;~}
  \left(\Se_\calm\right)^2\xRightarrow{\;{}^{\textbf{u}_\cala}\act\id\;}
  \DD_\Cala^{-1}\act\left(\Se_\calm\right)^2\xRightarrow{\;\eqref{eq:distinguished_dualcat}\;}\DD_\Dualcat^{-1}
  \label{piv_unimod_dualcat}
  \ee
provides an isomorphism of $\cala$-module functors; this proves (i). In order 
to prove (ii) we need to check the commutativity of the triangle
  \be
  \begin{tikzcd}[row sep=2.5em,column sep=1.7em]
  \id_{\dualcat} \ar[rr,"\overline{r}_{\!\dualcat}",Rightarrow] \ar[rd,"q",swap,Rightarrow]
  &~& (-)^{\rm lllla}
  \\
  ~& (-)\lla \ar[ru,"\!\!q\lla",swap,Rightarrow] &~
  \end{tikzcd}
  \ee
where $q$ is the pivotal structure of the dual tensor category given by the isomorphism
\eqref{pivotal_dualcat}, and $\overline{r}_{\!\dualcat}$ is the canonical isomorphism 
\eqref{Radford_can} of the dual tensor category $\dualcat$. Since 
$\overline{r}_{\!\dualcat}$ does not depend on the chosen trivialization of 
$\DD_\dualcat$, we can equally well use \eqref{piv_unimod_dualcat}. By inserting 
the definitions, it is not hard to see that this diagram reads, for $F\iN\dualcat$,
  \begin{eqnarray}
  \hspace*{-1.1em} \begin{tikzcd}[row sep=2.6em,column sep=2.5em]
  F \ar[r,Rightarrow,"\id\cir \tilde{p}\cir \tilde{p}\,"] \ar[d,swap,Rightarrow,"\id\cir\tilde{p}\,"]
  & F\cir\Se_\calm^\cala\cir\Se_\calm^\cala \ar[r,Rightarrow,"\eqref{Serre_module_functor}~"]
  & \Se_\calm^\cala\cir F\lla\cir\Se_\calm^\cala \ar[r,Rightarrow,"\eqref{Serre_module_functor}~"]
  & \Se_\calm^\cala\cir\Se_\calm^\cala\cir F^{\rm lllla} \ar[r,Rightarrow,"~(\tilde{p}\cir \tilde{p})^{-1}\circ\id~"]
  & F^{\rm lllla}
  \\
  F\cir\Se_\calm^\cala \ar[dr,Rightarrow,"\eqref{Serre_module_functor}",swap]
  \ar[ur,Rightarrow,"\id\cir\tilde{p}",swap] &~&~&~
  & \Se_\calm^\cala\cir F^{\rm lllla} \ar[ul,Rightarrow,"\tilde{p}\cir\id"]
  \ar[u,swap,Rightarrow,"\,\tilde{p}^{-1}\cir\id"]
  \\
  ~& \Se_\calm^\cala\cir F\lla \ar[r,swap,Rightarrow,"\tilde{p}^{-1}\cir\id"]
  \ar[uur,Rightarrow,"\id\cir\tilde{p}"]
  & F\lla \ar[r,swap,Rightarrow,"\id\cir\tilde{p}"]
  & F\lla\cir\Se_\calm^\cala \ar[uul,swap,Rightarrow,"\tilde{p}\cir\id"]
  \ar[ur,Rightarrow,"\eqref{Serre_module_functor}",swap]
  \end{tikzcd} \hspace*{-2.5em}
  \nonumber \\[-2.0em] ~
  \end{eqnarray}
This diagram indeed commutes: the upper-left and upper-right triangles commute trivially; the remaining squares commute due to functoriality of functor composition.
\end{proof}

\begin{cor}\label{spherical_pme_invariant}
Let $\cala$ and $\calb$ be two pivotal Morita equivalent pivotal tensor categories. 
$\cala$ is $($unimodular$)$ spherical if and only if $\calb$ is $($unimodular$)$ spherical.
\end{cor}

We will now explore sphericality for module categories in a similar vein. Let 
${}_\Calc\call_\cald$ be an exact bimodule category over unimodular pivotal 
tensor categories. Associated to $\call$ there is a Radford isomorphism
  \be
  \mathcal{R}_{\!\call}\Colon \DD_{\!\calc}^{-1} \act \Se^\calc_\call(-)
  \xNatiso \Se^{\overline{\cald}}_\call(-)\actr\DD_ {\!\cald}^{-1}\Colon
  \call\longrightarrow{}_{\ldd(-)}\call_{(-)\dd}
  \ee
of twisted bimodule functors (see Equation \eqref{eq:bimod_Radford}) which 
involves the distinguished invertible objects of both $\calc$ and $\cald$. The 
pivotal structures of $\calc$ and $\cald$ turn the functors 
$\DD_{\Calc}^{-1} \act \Se^\calc_\call$ and 
$\Se^{\overline{\cald}}_\call(-)\actr\DD_ {\!\cald}^{-1}$ into bimodule functors 
and $\mathcal{R}_{\!\call}$ becomes an isomorphism of bimodule functors.
Now we pick trivializations
  \be
  \textbf{u}_\calc\Colon\textbf{1}\xcong\DD_\Calc^{-1}
  \qquad\text{ and }\qquad
  \textbf{u}_\cald\Colon\textbf{1}\xcong\DD_\Cald^{-1}
  \ee
which together with $\mathcal{R}_{\!\call}$ furnish an isomorphism
  \be
  \overline{\mathcal{R}}_{\!\call}\Colon  \Se^\calc_\call \;\xNatiso \;\Se^{\overline{\cald}}_\call
  \label{Radford_bimod_triv}
  \ee
of bimodule functors. There are actually some subtleties in the definition of 
\eqref{Radford_bimod_triv}. First of all, unlike $\overline{r}_\Calc^{}$, the 
isomorphism $\overline{\mathcal{R}}_{\!\call}$ is not canonical; it depends on 
the choice of $\textbf{u}_\calc$ and $\textbf{u}_\cald$. Moreover, this is an 
isomorphism of bimodule functors by considering on $\Se^\calc_\call$ the left 
module structure
  \be
  \label{rad_untwist}
  \Se^\calc_\call(c\act x)\xrightarrow{~\eqref{Serre_twisted}~}
  c\dd\act\Se^\calc_\call(x)\xleftarrow{~\;\overline{r}_\Calc\;~}
  \ldd c\act \Se^\calc_\call(x)\xrightarrow{~\;p_{\tiny\ldd c}\;~}
  c\act \Se^\calc_\call(x)\;,
  \ee
and analogous module structure morphisms on $\Se^{\overline{\cald}}_\call$. Now 
a pivotal structure on ${}_\calc\call$ is a trivialization of $\Se^\calc_\call$ 
with module functor structure given by
  \be
  \label{piv_untwist}
  \Se^\calc_\call(c\act x)\xrightarrow{~\eqref{Serre_twisted}~}
  c\dd\act\Se^\calc_\call(x)\xleftarrow{~\;p_{c}\;~}
  c\act \Se^\calc_\call(x)\;.
  \ee
The module structures \eqref{rad_untwist} and \eqref{piv_untwist} on $\Se^\calc_\call$ 
might be different, but they coincide in the particular case that $\calc$ is 
spherical. A similar consideration holds for $\Se^{\overline{\cald}}_\call$ 
under sphericality of $\cald$.
We are now ready to define sphericality for bimodule categories by means of the 
Radford isomorphism $\overline{\mathcal{R}}_{\!\call}$:

\begin{defi}[Spherical bimodule category]\label{spherical_bimod}~\\
Let $\calc$ and $\cald$ be $($unimodular$)$ spherical tensor categories and 
$\textbf{u}_\calc\colon\textbf{1}\xcong\DD_\Calc^{-1}$ and 
$\textbf{u}_\cald\colon\textbf{1}\xcong\DD_\Cald^{-1}$ trivializations of the 
respective distinguished invertible objects.\\
A pivotal bimodule category ${}_\Calc\call_\cald$  is called 
\emph{spherical} iff the diagram
  \be
  \begin{tikzcd}[row sep=2.5em,column sep=1.8em]
  \Se^\calc_\call\ar[rr,"\overline{\mathcal{R}}_{\!\call}",Rightarrow]&~
  &  \Se^{\overline{\cald}}_\call\\
  ~&\id_\call\ar[ul,"\tilde{p}",Rightarrow]\ar[ur,"\tilde{q}",Rightarrow,swap]&~
  \end{tikzcd}
  \ee
commutes, where $\overline{\mathcal{R}}_{\!\call}$ is the composition
  \be
  \Se^\calc_\call \xRightarrow{~\textbf{u}_\calc\act\id~}  \DD_{\!\calc}^{-1}
  \act \Se^\calc_\call \xRightarrow{~\;\mathcal{R}_{\!\call}\;~}
  \Se^{\overline{\cald}}_\call(-)\actr\DD_ {\!\cald}^{-1} 
  \xRightarrow{~\id\actr\textbf{u}_\cald^{-1}~} \Se^{\overline{\cald}}_\call
  \ee
and $\tilde{p}$ and $\tilde{q}$ denote the pivotal structures of  
${}_\calc\call$ and $\call_\cald$, respectively.
\end{defi}

\begin{rem}\label{semi_dist_choices}
The notion of sphericality for bimodule categories given in Definition 
\ref{spherical_bimod} is relative to the choice of isomorphisms 
$\textbf{u}_\calc \colon \textbf{1}\xcong\DD_\Calc^{-1}$ and 
$\textbf{u}_\cald \colon \textbf{1}\xcong\DD_\Cald^{-1}$. However, there is 
a distinguished choice of such trivializations in the semisimple case:

\noindent 
Recall that the finite $\ko$-linear category underlying $\calc$ comes with a 
Nakayama functor \eqref{Nakayama}. For every projective object $p$ in $\calc$ 
there exists a linear map
  \be 
  \mathbf{t}^\calc_p\Colon \Hom_\calc\left(p,\Nak_\calc^r(p)\right) \longrightarrow \ko\label{twist_trace}\,,
  \ee 
called the \textit{$\Nak_\calx^r$-twisted trace} or simply the twisted trace 
of $\calx$ (see \Cite{Def.\ 2.4}{SchW} and \Cite{Def.\ 4.4}{shShi}). 
\noindent 
In case $\calc$ is semisimple, the monoidal unit $\textbf{1}$ is projective and 
consequently there is a twisted trace \eqref{twist_trace} associated to $\textbf{1}$.
Define the \textit{standard trivialization of} $\DD_\calc$ as the isomorphism 
$\textbf{s}_{\tiny\calc} \colon \mathbf{1}\xcong\Nak_\calc^r(\mathbf{1}) \eq
\DD_\calc^{-1}$ for which the composition
  \be
  \Hom_\calc\left(\mathbf{1},\mathbf{1}\right)\xrightarrow{~\,\textbf{s}_{\tiny\calc}
  \cir-\,~}\Hom_\calc\left(\mathbf{1},\Nak_\calc^r(\mathbf{1})\right) 
  \xrightarrow{~\;\eqref{twist_trace}\;~} \ko
  \ee
is equal to the $\ko$-linear map given by the assignment $\id_{\mathbf{1}}\,{\mapsto}\,1$.
\end{rem}

\medskip

We will now define sphericality of a one-sided (left) module category in terms 
of its associated invertible bimodule. Under the hypotheses of Theorem 
\ref{unimod_dualcat}, a choice of isomorphisms 
  \be
  \textbf{u}_\cala\Colon\textbf{1}\xcong\DD_\Cala^{-1}\qquad \text{ and }\qquad 
  \textbf{u}_\Dualcat\Colon\id_\Calm\xnatiso\DD_\Dualcat^{-1}
  \ee
turns the isomorphism \eqref{eq:distinguished_dualcat} into an $\cala$-module isomorphism
  \be
  \overline{r}_\Calm^{}\Colon \id_\calm \xRightarrow{~\textbf{u}_\Dualcat~\,}
  \DD_{\!\dualcat}^{-1} \xRightarrow{~\eqref{eq:distinguished_dualcat}~\,}  
  \DD_\Cala^{-1}\act
  \Se_\calm^\cala \cir \Se_\calm^\cala\xRightarrow{~\textbf{u}_\Cala^{-1}\Act\id~\,}
  \Se_\calm^\cala \cir \Se_\calm^\cala \,,
  \ee
where $\Se_\calm^\cala\cir\Se_\calm^\cala$ is the functor with module structure 
coming from the isomorphism $\overline{r}_\Cala^{}\eq p\dd{\circ}\,p$.

\medskip

\begin{defi}[Spherical module category]\label{spherical_mod}~\\
Let $\calm$ be a pivotal module category over a $($unimodular$)$ spherical 
tensor category $\cala$ and $\textbf{u}_\cala\colon\textbf{1}\xcong\DD_\Cala^{-1}$ 
and $\textbf{u}_\Dualcat\colon\id_\Calm\xnatiso\DD_\Dualcat^{-1}$ be trivializations 
of the distinguished invertible objects.\\
The pivotal module ${}_\cala\calm$ is called \emph{spherical}
iff the pivotal $(\cala,\overline{\dualcat})$-bimodule category $\calm$ is 
spherical, i.e. the diagram 
  \be
  \begin{tikzcd}[row sep=2.5em,column sep=1.8em]
  \Se^\cala_\calm  \ar[rr,"\overline{\mathcal{R}}_{\calm}",Rightarrow]  &~
  & \overline{\Se}^{\cala}_\Calm\ar[dl,"\tilde{p}",Rightarrow] \\
  ~& \id_\calm  \ar[ul,"\tilde{p}",Rightarrow]&~
  \end{tikzcd}
  \label{spherical_module}
  \ee
commutes, where $\widetilde{p}$ is the pivotal structure of $\calm$.
\end{defi}

\begin{rem}
\Enumeratei
\item 
 The commutativity of diagram \eqref{spherical_module} is  equivalent to the commutativity of
\be
  \begin{tikzcd}[row sep=2.5em,column sep=1.8em]
  \id_\Calm^{}  \ar[rr,"\overline{r}_\calm",Rightarrow] \ar[rd,"\tilde{p}",swap,Rightarrow] &~
  & \Se_\calm^\cala \cir\Se_\calm^\cala
  \\
  ~& \Se_\calm^\cala \ar[ru,"\id\cir\tilde{p}",swap,Rightarrow] &~
  \end{tikzcd}
\ee
which is in practice useful for checking sphericality on a pivotal module category.
\item In \cite{Ya} the choice of trivialization $\textbf{u}_\Dualcat$ is termed a \textit{unimodular structure} on $\calm$.
\end{enumerate}
\end{rem}

\begin{prop}\label{Fun_spherical}
Let $\calm$ and $\caln$ be spherical $\cala$-module categories. Then the 
$(\cala_\caln^*,\dualcat)$-bimodule category $\Fun_\cala(\calm,\caln)$ is spherical.
\end{prop}

\begin{proof}
According to Proposition \ref{pivotality_category_functors}(iii) 
$\Fun_\cala(\calm,\caln)$ inherits a pivotal structure as an 
$(\cala_\caln^*,\dualcat)$-bimodule category. Sphericality of 
$\Fun_\cala(\calm,\caln)$ is proved by checking the commutativity of the following diagram
  \be
  \begin{tikzcd}[row sep=2.5em,column sep=1.7em]
  H \ar[rr,"\overline{r}_{\,\Fun_\Cala(\!\calm,\caln)}",Rightarrow] \ar[rd,"\eqref{piv_fun}",swap,Rightarrow]
  &~& \big({\Se^{\overline{\dualcat}}_{\Fun_\Cala(\calm,\caln)}}\big)^2(H)
  \\
  ~& \Se^{\overline{\dualcat}}_{\Fun_\Cala(\calm,\caln)}(H)
  \ar[ru,"\!\!\eqref{piv_fun}\lla",swap,Rightarrow] &~
  \end{tikzcd}
  \ee
for every module functor $H\colon\calm\to\caln$. Considering 
\eqref{eq:Serre_bimodFun}, this diagram turns explicitly into
  \be\label{diagram_Fun}
  \begin{tikzcd}[row sep=2.5em,column sep=1.7em]
  H \ar[rr,"\overline{r}_{\!\caln}\cir\id",Rightarrow] \ar[dd,"\hat{p}\cir\id\,",swap,Rightarrow]
  &&\Se_\caln^\cala\cir\Se_\caln^\cala\cir H \ar[rr,"\id\cir
  \;\overline{r}^{-1}_{\!\calm}",Rightarrow]\ar[ddrr,Rightarrow,,"\id
  \cir\tilde{q}"]&& \Se_\caln^\cala\cir\Se_\caln^\cala\cir H
  \cir\lSe{}_\calm^\cala\cir\lSe{}_\calm^\cala
  \\
  \\
  \Se_\caln^\cala\cir H\ar[urur,Rightarrow,"\hat{p}\cir \id"]\ar[rr,Rightarrow,"\id
  \cir\tilde{q}",swap]&& \Se_\caln^\cala\cir H\cir\lSe{}_\calm^\cala
  \ar[rr,swap,Rightarrow,"\hat{p}\cir\id"] &&\Se_\caln^\cala\cir\Se_\caln^\cala
  \cir H\cir\lSe{}_\calm^\cala\ar[uu,Rightarrow,swap,"\;\id\cir\tilde{q}"]
  \end{tikzcd}
  \ee
where $\widehat{p}\colon \id_\caln \,{\xNatiso}\,\Se_\caln^{\cala}$ is the pivotal 
structure of ${}_\cala\caln$ and 
$\widetilde{q}\colon \id_\calm \,{\xNatiso}\, \lSe{}_\calm^\cala\cong\Se_\calm^{\Dualcat}$ 
is the pivotal structure coming from \eqref{Mpivotal_dual}. The commutativity of 
the left triangle in diagram \eqref{diagram_Fun} is precisely the sphericality 
condition of the pivotal module ${}_\cala\caln$, while the sphericality of 
${}_{\cala}\calm$ implies the commutativity of the right triangle. The square 
in the middle commutes due to functoriality of composition.
\end{proof}

\begin{rem}
Given a (unimodular) spherical tensor category $\cala$ and a spherical module 
category ${}_\cala\calm$, Proposition \ref{Fun_spherical} implies that the 
$(\overline{\dualcat},\cala)$-bimodule category $\FunM$ is spherical. By Theorem 
\ref{unimod_dualcat}, the dual tensor category $\dualcat$ is spherical as well.  
And thus, every Hom-category of the pivotal bicategory $\Mor$ associated to 
${}_\cala\calm$ is spherical.

\noindent The sphericality condition on a pivotal module category can then be interpreted in the 
bicategorical setting as the statement that the pivotal structure of $\Mor$ squares
to the Radford pseudo-natural equivalence \eqref{Radford_pseudo}. 
Beyond unimodularity, 
the non-triviality of the distinguished invertible objects in \eqref{Radford_pseudo}
suggests that such a bicategorical interpretation could be studied in the framework
of quasi-pivotal structures, as discussed in Remark \ref{quasi_pivotal}.
\end{rem}

\begin{defi}\label{def:sphMorita}
A pivotal Morita context $(\cala,\calb,\calm,\caln,\mixto{}{},\mixtdo{}{})$ is said to be 
\emph{spherical} iff the Radford pseudo-equivalence \eqref{Radford_pseudo} of its 
associated bicategory $\Mor$ is equivalent to the square of the pivotal structure of $\Mor$.
\end{defi}
}


\section{Dualities and pivotality for equivariant Morita Theory}\label{sec:equivariant}

In \cite{GJS} the notion of categorical Morita equivalence has been extended to 
finite tensor categories graded by a finite group $G$. The intention of the present 
section is to study the interaction of pivotal structures and Morita theory in the 
equivariant setting. {\Color We now assume that the base field \ko\ is of characteristic zero and, as before, that it is algebraically closed.}

\subsection{Graded module categories and graded Morita theory}

Let $G$ be a finite group. A \emph{$G$-grading} on a tensor category $\cala$ 
consists of a decomposition 
  \be
  \cala = \Gsum \,\cala_g
  \ee
into a direct sum of full abelian subcategories such that for $g,h\iN G$ the 
tensor product restricts to a bifunctor 
$\otimes\colon \cala_g\,{\times}\,\cala_h\Rarr{}\cala_{gh}$. If $\cala_g\,{\ne}\,0$ 
for every $g\iN G$ the $G$-grading is called \emph{faithful}. We will only consider
faithful gradings. A \emph{$G$-graded module category over $\cala$} is an 
$\cala$-module category with a decomposition
  \be
  \calm = \Gsum \calm_g
  \label{eq:Gsum-calm}
  \ee
into a direct sum of full abelian subcategories, with $\calm_g\,{\ne}\,0$ for 
every $g\iN G$, and such that for $g,h\iN G$ the $\cala$-action restricts to 
$\act\colon \cala_g\,{\times}\,\calm_h\Rarr{}\calm_{gh}$. 
Given $G$-graded module categories $\calm$ and $\caln$, the category of module 
functors decomposes as \Cite{Prop.\,4.8}{GJS} 
  \be
  \Fun_\cala(\calm,\caln)=\Gsum \Fun_\cala(\calm,\caln)_g \,,
  \ee
where $\Fun_\cala(\calm,\caln)_g$ is the category of \emph{homogeneous module functors}
of degree $g$, i.e.\ module functors  $H\colon \calm\Rarr{}\caln$ satisfying
$H(\calm_x) \,{\subseteq}\, \caln_{xg}$ for every $x\iN G$. A \emph{grading preserving}
module functor is a homogeneous module functor of trivial degree. The decomposition
\eqref{eq:Gsum-calm} turns $\overline{\dualcat}$ into a $G$-graded tensor category and
$\Fun_\cala(\calm,\caln)$ into a $G$-graded $\overline{\dualcat}$-module category.

\begin{defi}\Cite{Def.\,4.10}{GJS}\label{GME}
Two $G$-graded tensor categories $\cala$ and $\calb$ are said to be \emph{graded Morita 
equivalent} iff there exists a $G$-graded $\cala$-module category $\calm$ together with 
a $G$-gra\-ded tensor equivalence $\calb\,{\simeq}\,\overline{\dualcat}$.
\end{defi}

One expects that the equivalence data for the notion of graded Morita equivalence are
endowed with a graded structure in a compatible manner. That is, the categories in 
the Morita context of a $G$-graded module category should be graded and be related 
via grading preserving actions. Indeed we have

\begin{prop}\label{graded_strong_Morita_context}
Let $\cala$ be a $G$-graded tensor category and $\calm$ a $G$-graded $\cala$-module 
category and consider $(\cala,\,\overline{\dualcat},\,\calm,\,\FunM,\,\mixto{}{},\,\mixtdo{}{})$
the Morita context associated to it.
\begin{enumerate}[\rm (i)]
    \item 
$\overline{\dualcat}$ is a $G$-graded tensor category.
    \item 
$\calm$ and $\FunM$ are $G$-graded bimodule categories.
    \item 
The mixed products $\mixt{}{}$ and $\mixtd{}{}$ are compatible
with the group law of $G$.
\end{enumerate}    
\end{prop}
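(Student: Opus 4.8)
The plan is to verify the three assertions in turn, in each case by tracing how the $G$-grading interacts with the explicit descriptions of the tensor products, module actions and mixed products that appear in the Morita context of Theorem \ref{Morita_context_exact_module}. Assertion (i) is essentially recalled in the discussion preceding the proposition: one takes the decomposition $\overline{\dualcat}=\Gsum\overline{\dualcat}_g$, where $\overline{\dualcat}_g$ consists of the homogeneous module endofunctors of degree $g$, and notes that composition of a degree-$g$ functor with a degree-$h$ functor is of degree $gh$ by the defining property $H(\calm_x)\subseteq\calm_{xg}$. Thus $\overline{\dualcat}$ is a $G$-graded tensor category, and the grading is faithful since $\calm$ is a faithfully graded module category.

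First I would treat (ii). For $\calm$ the relevant structure is that of an $(\cala,\overline{\dualcat})$-bimodule; the left $\cala$-action already restricts to $\act\colon\cala_g\Times\calm_h\To\calm_{gh}$ by hypothesis, and for the right $\overline{\dualcat}$-action one checks that if $m\iN\calm_h$ and $F\iN\overline{\dualcat}_g$ then $m\actr F=F(m)\iN\calm_{hg}$, again directly from the definition of a homogeneous functor. Hence $\calm=\Gsum\calm_g$ is a $G$-graded $(\cala,\overline{\dualcat})$-bimodule category. For $\FunM$ one uses the decomposition $\FunM=\Gsum\FunM_g$ from \Cite{Prop.\,4.8}{GJS}, together with the bimodule actions $F\actF H=H\cir F$ and $H\actrF a=(-\ot a)\cir H$: if $H\iN\FunM_g$, $F\iN\overline{\dualcat}_h$ and $a\iN\cala_k$, then $H\cir F$ is homogeneous of degree $hg$ (precompose the degree counts) and $(-\ot a)\cir H$ is homogeneous of degree $gk$, since postcomposing with $-\ot a\colon\cala_x\To\cala_{xk}$ shifts the degree of the output by $k$. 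This gives $\FunM$ the structure of a $G$-graded $(\overline{\dualcat},\cala)$-bimodule category.

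For (iii) the point is to show that the mixed products $\mixt{}{}\colon\calm\Times\FunM\To\cala$ and $\mixtd{}{}\colon\FunM\Times\calm\To\overline{\dualcat}$ send homogeneous pairs to homogeneous objects with the expected degrees, i.e.\ $\mixt{\calm_g}{\FunM_h}\subseteq\cala_{gh}$ and $\mixtd{\FunM_g}{\calm_h}\subseteq\overline{\dualcat}_{gh}$ (with whatever convention on the order of the group elements is fixed by the grading conventions of Section \ref{sec:equivariant}; I would state it in the normalization used there). Recalling $\mixt{m}{H}=H(m)$, for $m\iN\calm_g$ and $H\iN\FunM_h$ we get $H(m)\iN\calm_{gh}$, but $H$ takes values in $\cala$ viewed via the identification $\FunM\supseteq\Fun_\cala(\calm,\cala)$, so one must keep track of the grading on the regular module $_\cala\cala=\Gsum\cala_g$; the conclusion $H(m)\iN\cala_{gh}$ then follows. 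Likewise $\mixtd{H}{m}=(-\act m)\cir H$ is a module endofunctor of $\calm$ whose degree, computed by composing the degree shifts of $H\iN\FunM_g$ and of $-\act m\colon\cala_x\To\calm_{x\,\deg m}$, comes out as $g\,(\deg m)$; hence $\mixtd{\FunM_g}{\calm_h}\subseteq\overline{\dualcat}_{gh}$.

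I expect the only genuine subtlety, and the main thing to get right, to be \emph{bookkeeping of the group elements and their order}: the grading on $\overline{\dualcat}$, on $\FunM$ (which is a \emph{right} $\cala$-module but carries a \emph{left} $\overline{\dualcat}$-action), and on the opposite-type categories all involve $G$ acting on one side or the other, and composing functors reverses orders, so the naive guesses for which product lies in which graded piece can be off by a transposition $g\leftrightarrow h$ or an inversion $g\leftrightarrow g^{-1}$. The safe route is to fix once and for all the convention $H\iN\FunM_g\iff H(\calm_x)\subseteq(-)_{xg}$ as in \Cite{Prop.\,4.8}{GJS}, and then let every degree computation be forced by that single convention; no nontrivial coherence or exactness input beyond what is already recorded in Theorem \ref{Morita_context_exact_module} and \Cite{Prop.\,4.8}{GJS} is needed, so the proof is a matter of carefully matching indices rather than of any new idea.
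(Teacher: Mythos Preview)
Your proposal is correct and follows essentially the same approach as the paper: both arguments cite \Cite{Prop.\,4.8}{GJS} for the grading on $\overline{\dualcat}$ and $\FunM$, and then verify the remaining statements by direct degree computations of the form $m\actr F\iN\calm_{gy}$, $H\actrF a\iN\FunM_{xh}$, $\mixt{m}{H}\iN\cala_{gx}$, $\mixtd{H}{m}\iN(\overline{\dualcat})_{xg}$. The paper's proof is simply a terser version of yours, listing these four inclusions without the surrounding commentary on bookkeeping.
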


\begin{proof}
That $\overline{\dualcat}$ is a $G$-graded 
tensor category and $\FunM$ is a $G$-graded $\overline{\dualcat}$-module is part
of the statement in Proposition 4.8 of \cite{GJS}. The remaining assertions follow
from the fact that, by the definition of the gradings, we have
  \be
  \begin{aligned}
  m\actr F = F(m) \,\in\calm_{gy} \,, \qquad 
  & H\actr a = H(-)\otimes a \,\in\FunM_{xh} \,,
  \Nxl3
  \mixt{m}{H} = H(m) \,\in\calm_{gx} \,, \qquad
  & \mixtd{H}{m} = H(-)\act m \, \in (\overline{\dualcat})_{xg}
  \end{aligned}
  \ee
for all $m\iN\calm_g$, $a\iN\cala_h$, $H\iN\FunM_x$ and $F\iN(\overline{\dualcat})_y$.
\end{proof}

\begin{rem}
Proposition \ref{graded_strong_Morita_context} can be seen as a statement about
the bicategory $\Mor$ associated to the Morita context of an exact $G$-graded module 
category: $\Mor$ is a bicategory enriched in the (non-symmetric) monoidal $2$-category of $G$-graded 
linear abelian categories and grading preserving functors.
\end{rem}

A feature of a $G$-graded tensor category is that the duals of a homogeneous object
are again homogeneous, with inverse degree. This holds for the Morita context of
an exact $G$-graded module category as well:

\begin{prop}\label{graded_duals}
Let $\cala$ be a $G$-graded finite tensor category and $\calm$ an exact $G$-graded 
$\cala$-mo\-du\-le category.
\begin{enumerate}[\rm (i)]
    \item  
For a homogeneous object $m\iN\calm_g$, the duals $m^\vee$ and $\Vee m$ are
homogeneous of degree $g^{-1}$ in $\FunM$.
    \item 
For a homogeneous object $H\iN\FunM_g$, the duals $H^\vee$ and $\Vee H$ are
homogeneous of degree $g^{-1}$ in $\calm$.
    \item 
The relative Serre functors of $\calm$ and of $\FunM$ are grading preserving.
\end{enumerate}
\end{prop}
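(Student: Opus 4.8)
The plan is to prove all three statements by tracking degrees through the explicit descriptions of the duals and the relative Serre functors given in Definition \ref{def:duals}, Remark \ref{iHoms_icoHoms_Morita} and \eqref{eq:relSerre4M}, combined with the grading statements already established in Proposition \ref{graded_strong_Morita_context} and in \Cite{Prop.\,4.8}{GJS}. The key observation is that all the relevant functors --- internal Homs, internal coHoms, adjoints of homogeneous module functors, and the mixed products --- are compatible with the $G$-grading, so that one only needs to chase the degree bookkeeping.

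First I would prove (i). By Remark \ref{iHoms_icoHoms_Morita}(i), for $m\iN\calm_g$ the right dual is $m^\vee = \iHom_\calm^\cala(m,-) = \mixt{-}{m^\vee}$ regarded as an object of $\FunM$; concretely $\iHom_\calm^\cala(m,n)\iN\cala$ for $n\iN\calm$. Using the defining adjunction $\Hom_\calm(a\act m,n)\cong\Hom_\cala(a,\iHom_\calm^\cala(m,n))$ together with the grading constraint $\act\colon\cala_h\Times\calm_g\To\calm_{hg}$, one sees that $\iHom_\calm^\cala(m,n)$ is supported in degree-components dictated by $g$: writing $n\iN\calm_y$, a nonzero $\Hom$ forces $hg = y$, i.e.\ $h = yg^{-1}$, so $\iHom_\calm^\cala(m,n)\iN\cala_{yg^{-1}}$. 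Hence $m^\vee = \iHom_\calm^\cala(m,-)$ sends $\calm_y$ to $\cala_{yg^{-1}}$, which by definition means $m^\vee\iN\FunM_{g^{-1}}$. The same computation with $\icoHom$ in place of $\iHom$ (using relation \eqref{hom_cohom} and the analogous adjunction) gives $\Vee m\iN\FunM_{g^{-1}}$. Statement (ii) is entirely parallel: by Remark \ref{iHoms_icoHoms_Morita}(vii)--(viii) we have $H^\vee = H\ra(\un)$ and $\Vee H = H\la(\un)$ as objects of $\calm$, and the point is that a homogeneous module functor $H$ of degree $g$ has $H\ra$ and $H\la$ homogeneous of degree $g^{-1}$ (this is the module-functor analogue of inverse degree of duals, following from the adjunction isomorphisms $\Hom(H(m),n)\cong\Hom(m,H\ra(n))$ and the grading $H(\calm_x)\subseteq\caln_{xg}$, exactly as in \Cite{Prop.\,4.8}{GJS}); evaluating on $\un\iN\cala_e\subseteq\calm_e$ (the monoidal unit sits in the trivial component) then lands in $\calm_{g^{-1}}$.

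For (iii), the cleanest route is to use Proposition \ref{double_duals}: $\Se_\calm^\cala(m)\cong m\dd$, and by (i) applied twice, $m\iN\calm_g$ gives $m^\vee\iN\FunM_{g^{-1}}$ and then $m\dd = (m^\vee)^\vee\iN\calm_{(g^{-1})^{-1}} = \calm_g$. Thus $\Se_\calm^\cala$ preserves degrees; its quasi-inverse $\lSe_\calm^\cala$ then does too, and the same argument with (ii) shows $\Se_\FunM^{\overline{\dualcat}}\cong (-)\dd$ preserves the degree on $\FunM$. I do not expect any serious obstacle here: the only point requiring a little care is to confirm that the adjoints of a \emph{homogeneous} module functor are again homogeneous with inverse degree --- once that lemma is in hand (and it is essentially contained in \Cite{Prop.\,4.8}{GJS} and the decomposition of $\Fun_\cala(\calm,\caln)$ into homogeneous pieces), everything else is a routine degree count. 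The mild subtlety worth a sentence in the writeup is that $\un$ and $\id_\calm$ are homogeneous of trivial degree, which is what makes the evaluations $H\ra(\un)$, $\iHom(m,-)\ra(\un)$, etc.\ behave correctly.
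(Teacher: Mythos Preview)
Your proof is correct and follows essentially the same route as the paper: part (i) via the degree of the internal Hom (which the paper cites as \Cite{Prop.\,4.2}{GJS} rather than rederiving from the adjunction), part (ii) via the inverse-degree homogeneity of adjoints evaluated at $\un\iN\cala_e$, and part (iii) via Proposition \ref{double_duals} identifying the relative Serre functors with double duals. Two small cosmetic points: the formulas $H^\vee = H\ra(\un)$ and $\Vee H = H\la(\un)$ are Definition \ref{def:duals}(iii)--(iv) rather than Remark \ref{iHoms_icoHoms_Morita}(vii)--(viii), and the inclusion ``$\cala_e\subseteq\calm_e$'' should be dropped --- $\un$ lives in $\cala_e$, and $H\ra,H\la\colon\cala\to\calm$.
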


\begin{proof}
According to \Cite{Prop.\,4.2}{GJS} we have $\iHomM^\cala(m,n)\iN\cala_{hg^{-1}}$
for $m\iN\calm_g$ and $n\iN\calm_h$. This also implies that 
$\icoHom_\calm^\cala(m,n) \,{\cong}\, \Vee\iHomM^\cala(n,m)\iN\cala_{hg^{-1}}$,
which proves (i). 
 \\[2pt]
To show (ii), notice that the adjoints of a module functor
$H\iN\FunM_g$ are homogeneous module functors in $\Fun_\cala(\cala,\calm)_{g^{-1}}$. 
Since $\textbf{1}\iN\cala_e$, it follows that 
$H\ra(\textbf{1}),H\la(\textbf{1})\iN\calm_{g^{-1}}$.
 \\[2pt]
Assertion (iii) follows from (i) and (ii) together with Proposition \ref{double_duals}.
\end{proof}


\subsection{De-equivariantization and the equivariant center}

For a $G$-graded finite tensor category $\cala$ there is a construction called 
the \emph{equivariant center} \Cite{Sect.\,3}{GNN} which assigns a braided 
$G$-crossed tensor category $\calz_G(\cala)$ to $\cala$. This is an instance of a 
procedure known as de-equivariantization, which is well studied \Cite{Sect.\,4.4}{DGNO}:
Let $\cald$ be a braided tensor category together with a braided fully faithful functor
$\text{Rep}(G)\Rarr{}\cald$. The group $G$ acts by left translations on the set 
$\text{Fun}(G,\ko)$ of functions, thereby turning it into an object in $\text{Rep}(G)$.
Moreover, this object has a canonical structure of a commutative special Frobenius 
algebra in $\text{Rep}(G)$. Denote by $L$ the image in $\cald$ of $\text{Fun}(G,\ko)$ 
under the functor $\text{Rep}(G)\Rarr{}\cald$.
The \emph{de-equivariantization of $\cald$} is a braided $G$-crossed tensor category
$\cald_G$ whose underlying tensor category is the category of modules 
${}_L\Mod(\cald)$ with tensor product $\otimes_L$.

Now the Drinfeld center of a $G$-graded finite tensor category $\cala$ is a 
braided tensor category endowed with a fully faithful braided functor
  \be
  \text{Rep}(G) \rarr~ \calz(\cala)\,, \qquad
  (\ko^n,\rho) \xmapsto{~} (\mathbf{1}^n,\gamma_{-,\mathbf{1}^n}) \,,
\ee
where for $a\iN\cala_g$ the half-braiding is defined via
$\gamma_{a,\mathbf{1}^n_{}} \colon a\ot \mathbf{1}^n 
\,{\xrightarrow{\;\id_a\otimes \rho(g)\;}}\, a\ot \mathbf{1}^n \eq\mathbf{1}^n\ot a$.
The \emph{equivariant center} $\calz_G(\cala)$ can be characterized as the
de-equivariantization of $\calz(\cala)$, i.e.\ $\calz_G(\cala)={}_L\Mod(\calz(\cala))$.

\begin{prop}{\rm \Cite{Prop.\,4.20}{GJS}}
Let $\calm$ be an exact $G$-graded $\cala$-module category. The braided equivalence 
$\calz(\cala)\,{\simeq}\, \calz(\overline{\dualcat})$ given by \eqref{schauenburg}
induces, via de-equivariantization, an equivalence
  \be
  \label{G_schauenburg}
  \calz_G(\cala) \xsimeq\, \calz_G(\,\overline{\dualcat}\,)
  \ee
of braided $G$-crossed tensor categories.
\end{prop}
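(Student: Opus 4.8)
The plan is to bootstrap the equivalence \eqref{G_schauenburg} directly from the ungraded braided equivalence $\Sigma$ in \eqref{schauenburg} together with the de-equivariantization machinery recalled just above. Since the equivariant center is \emph{defined} as a de-equivariantization, $\calz_G(\cala) = {}_L\Mod(\calz(\cala))$ and $\calz_G(\overline{\dualcat}) = {}_{L'}\Mod(\calz(\overline{\dualcat}))$ where $L$ and $L'$ are the images of $\text{Fun}(G,\ko)$ under the respective canonical functors $\text{Rep}(G)\To\calz(\cala)$ and $\text{Rep}(G)\To\calz(\overline{\dualcat})$, it suffices to check that $\Sigma$ is compatible with these $\text{Rep}(G)$-structures. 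So the first step is to verify that the triangle of braided functors
  \be
  \begin{tikzcd}[row sep=1.9em,column sep=1.2em]
  ~& \text{Rep}(G) \ar[dl,swap] \ar[dr] &~
  \\
  \calz(\cala) \ar[rr,"\Sigma",swap] &~& \calz(\,\overline{\dualcat}\,)
  \end{tikzcd}
  \ee
commutes up to a canonical braided natural isomorphism. This is essentially a bookkeeping computation with \eqref{schauenburg}: the object $(\ko^n,\rho)$ maps on the left to $(\mathbf{1}^n,\gamma)$ and then under $\Sigma$ to the module endofunctor $\mathbf{1}^n\act - = \id_\calm^{\oplus n}$ of $\calm$, with half-braiding read off from the module-functor structure; on the right it maps directly to $\mathbf{1}^n\act-$ in $\overline{\dualcat}$ with the half-braiding induced by $\rho$. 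One checks these agree because the half-braiding $\gamma_{F,a\Act-}$ in \eqref{schauenburg} is defined through the module-functor structure of $F$, and for $F = \mathbf{1}^n\act-$ that structure is the trivial one twisted by $\rho$.

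Once this compatibility is established, $\Sigma$ sends the algebra $L\iN\calz(\cala)$ to an algebra isomorphic to $L'\iN\calz(\overline{\dualcat})$, compatibly with the commutative special Frobenius structures, since both arise by transporting the canonical Frobenius algebra $\text{Fun}(G,\ko)\iN\text{Rep}(G)$. Being a braided tensor equivalence, $\Sigma$ then restricts to an equivalence of module categories ${}_L\Mod(\calz(\cala))\xsimeq{}_{L'}\Mod(\calz(\overline{\dualcat}))$ which is monoidal for $\otimes_L$ and $\otimes_{L'}$ — this is the general fact that a braided equivalence carrying one commutative algebra to another induces an equivalence of the corresponding categories of modules as braided (here $G$-crossed) tensor categories. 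The last step is to identify the induced $G$-grading and the $G$-action: the homogeneous components of $\calz_G(\cala)$ correspond to the $\ko$-eigenspaces of the $L$-action, i.e.\ to the points of $G$, and the $G$-action comes from the $G$-action on $\text{Fun}(G,\ko)$ by translations; since $\Sigma$ intertwines everything in sight with the $\text{Rep}(G)$-structures, it automatically respects grading and $G$-action, yielding an equivalence of braided $G$-crossed tensor categories.

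The main obstacle I anticipate is the first step — pinning down precisely that $\Sigma$ is a \emph{braided} functor under $\text{Rep}(G)$, i.e.\ that the half-braiding it assigns to the image of an object of $\text{Rep}(G)$ matches the canonical half-braiding on that side. This requires unwinding the definition \eqref{schauenburg} of the half-braiding $\gamma$ in terms of module-functor structures and comparing it, for the specific objects $(\mathbf{1}^n,\gamma_{-,\mathbf{1}^n})$, with the half-braiding coming from $\rho\colon G\To \mathrm{GL}(\ko^n)$; the coherence (hexagon) compatibility must also be checked, though it follows formally once the underlying natural isomorphism is identified. Everything downstream — passing to modules over $L$, recovering the grading and the crossed structure — is then a routine application of the standard de-equivariantization formalism recalled above and of the functoriality of ${}_{(-)}\Mod$ under braided equivalences.
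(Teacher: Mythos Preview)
The paper does not actually supply a proof of this proposition: it is stated with the attribution \Cite{Prop.\,4.20}{GJS} and no argument is given. So there is no ``paper's own proof'' to compare against; the result is imported wholesale from the cited reference.

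That said, your outline is the natural strategy and is essentially what one would expect to find in \cite{GJS}: show that the Schauenburg-type equivalence $\Sigma$ of \eqref{schauenburg} intertwines the two canonical braided embeddings $\text{Rep}(G)\hookrightarrow\calz(\cala)$ and $\text{Rep}(G)\hookrightarrow\calz(\overline{\dualcat})$, whence $\Sigma(L)\cong L'$ as commutative algebras, and then invoke functoriality of de-equivariantization. You correctly flag the only nontrivial point, namely matching the half-braidings. One subtlety worth spelling out more carefully than you do: the image $\Sigma(\mathbf{1}^n,\gamma_{-,\mathbf{1}^n})$ has as underlying object of $\overline{\dualcat}$ the functor $\mathbf{1}^n\act-$ equipped with the $\rho$-\emph{twisted} $\cala$-module structure (this is how $\sigma$ enters in \eqref{schauenburg}), whereas the direct embedding $\text{Rep}(G)\to\calz(\overline{\dualcat})$ lands on $\id_\calm^{\oplus n}$ with the \emph{standard} module structure and the $\rho$-twisted half-braiding. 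These are genuinely different objects of $\overline{\dualcat}$, and the isomorphism between the two central objects is furnished by the family $\rho(h)\colon \mathbf{1}^n\act m\to\mathbf{1}^n\act m$ for $m\iN\calm_h$, which one must check is a module natural transformation intertwining the two half-braidings. This is the actual content of the ``bookkeeping computation'' you allude to; once it is done, the rest of your argument goes through as stated.
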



\subsection{Pivotality in the equivariant picture}

We consider now a $G$-graded tensor category $\cala$ endowed with a pivotal structure.

\begin{defi}
An exact $G$-graded module $\calm$ over a $G$-graded pivotal category $\cala$ is 
said to be \emph{pivotal} iff the underlying module category has the structure of
a pivotal module.
\end{defi}

\begin{prop}
Let $\calm$ be an exact $G$-graded module category over a $G$-graded pivotal 
tensor category $\cala$. A pivotal structure on ${}_\cala\calm$ is the same as 
a pivotal structure on ${}_{\cala_e}\calm_e$.
\end{prop}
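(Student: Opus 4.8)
The statement is an "if and only if" about pivotal structures, so the plan is to produce a bijection between pivotal structures on ${}_\cala\calm$ and pivotal structures on the degree-$e$ part ${}_{\cala_e}\calm_e$. The key observation to exploit is Proposition~\ref{graded_duals}(iii): the relative Serre functor $\Se_\calm^\cala$ is grading preserving, hence it restricts to an endofunctor of each homogeneous component $\calm_g$, and in particular to $\calm_e$. Moreover I expect that the restriction of $\Se_\calm^\cala$ to $\calm_e$, viewed as a module over $\cala_e$, is precisely a relative Serre functor $\Se_{\calm_e}^{\cala_e}$: this should follow from the fact that $\iHom_\calm^\cala(m,n)$ for $m,n\iN\calm_e$ lands in $\cala_e$ (the special case $h=g=e$ of \Cite{Prop.\,4.2}{GJS}, already used in the proof of Proposition~\ref{graded_duals}), so the defining natural isomorphisms of the relative Serre functor of $\calm$ restrict to those required of a relative Serre functor of $\calm_e$ over $\cala_e$. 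First I would establish this identification carefully, since everything downstream rests on it.

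With that in hand, the forward direction is almost formal: given a pivotal structure $\widetilde p\colon \id_\calm \xRightarrow{\cong} \Se_\calm^\cala$, since both functors preserve the grading the natural transformation $\widetilde p$ has components $\widetilde p_m$ with $m\iN\calm_g$ lying in $\calm_g$ for each $g$; restricting to $g=e$ gives $\widetilde p|_{\calm_e}\colon \id_{\calm_e}\xRightarrow{\cong}\Se_{\calm_e}^{\cala_e}$, and the pentagon \eqref{condition_pivotal_module} for ${}_\cala\calm$ specializes, on objects $a\iN\cala_e$ and $m\iN\calm_e$, to the corresponding pentagon for ${}_{\cala_e}\calm_e$ (using that the pivotal structure $p$ of $\cala$ restricts to one of $\cala_e$, again because $(-)\dd$ preserves the grading). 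For the reverse direction, given a pivotal structure $\widetilde p^e$ on ${}_{\cala_e}\calm_e$, I would reconstruct a pivotal structure on ${}_\cala\calm$ by propagating along the grading: for $m\iN\calm_g$ choose any object $a\iN\cala_{g^{-1}}$ that is "invertible enough" — more precisely, use that $\calm$ is indecomposable as a graded module and that acting by objects of $\cala_{g^{-1}}$ sends $\calm_g$ into $\calm_e$, together with the twisted module structure \eqref{Serre_twisted} of $\Se_\calm^\cala$, to transport $\widetilde p^e$ to a candidate $\widetilde p_m$, then check well-definedness and naturality using \eqref{condition_pivotal_module} for $\widetilde p^e$.

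The main obstacle is this reverse direction: showing that the transported family is independent of the choices made (of the auxiliary object in $\cala_{g^{-1}}$ and of the isomorphism used to move from $\calm_g$ to $\calm_e$) and that it genuinely satisfies the full pentagon \eqref{condition_pivotal_module} for all $a\iN\cala$, not merely for $a\iN\cala_e$. The cleanest route is probably to avoid explicit transport altogether and instead argue abstractly: the space of module natural isomorphisms $\id_\calm\xRightarrow{}\Se_\calm^\cala$ satisfying \eqref{condition_pivotal_module} is controlled by the invertible elements of $\Nat_{\cala}(\id_\calm,\id_\calm)$ compatible with the $\cala$-action, and since $\cala_e$ generates $\cala$ under the tensor product (faithfulness of the grading plus the fact that $\cala_g\ot\cala_{g^{-1}}$ contains $\mathbf 1$), a module natural automorphism of $\id_\calm$ is determined by — and any automorphism of $\id_{\calm_e}$ extends uniquely to — one on all of $\calm$. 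I would then only need to verify that this extension is compatible with the full pentagon, which follows because both sides of \eqref{condition_pivotal_module} are determined by their restrictions to homogeneous components and agree there. Phrasing and organizing this generation argument so that the pentagon check is genuinely reduced to the degree-$e$ case is where the real work lies; the rest is bookkeeping with the grading.
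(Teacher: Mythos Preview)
Your forward direction and your identification $\Se_\calm^\cala|_{\calm_e}=\Se_{\calm_e}^{\cala_e}$ match the paper exactly. The difference lies in the reverse direction. You propose to transport $\widetilde p^e$ along the grading by hand and then wrestle with well-definedness and the full pentagon; the paper sidesteps all of this with a single structural observation. Once the pivotal structure $p$ of $\cala$ is used to untwist $\Se_\calm^\cala$ into an honest (degree-$e$) $\cala$-module endofunctor, the pentagon \eqref{condition_pivotal_module} is \emph{precisely} the statement that $\widetilde p$ is an $\cala$-module natural transformation. Hence a pivotal structure on ${}_\cala\calm$ is nothing but an isomorphism $\id_\calm\to\Se_\calm^\cala$ in the category $(\dualcat)_e$. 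The paper then invokes \Cite{Thm.\,3.3}{G}, which gives an equivalence of categories
\[
(\dualcat)_e \,=\, \Fun_\cala(\calm,\calm)_e \;\simeq\; \Fun_{\cala_e}(\calm_e,\calm_e) \,=\, (\cala_e)^*_{\calm_e}
\]
by restriction to $\calm_e$. Under this equivalence $\id_\calm\mapsto\id_{\calm_e}$ and $\Se_\calm^\cala\mapsto\Se_{\calm_e}^{\cala_e}$, so the Hom-sets (and their invertibles) are in bijection, and you are done.

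Your ``abstract'' route is in spirit the same idea---you are groping toward the fact that degree-$e$ $\cala$-module endofunctors and their morphisms are determined by their restriction to $\calm_e$---but you are effectively reproving the cited theorem rather than using it. Your explicit transport approach has a genuine pitfall: there need not be any invertible object in $\cala_{g^{-1}}$, so ``invertible enough'' has to mean something like writing every $m\iN\calm_g$ as a retract of $a\act m'$ with $a\iN\cala_g$, $m'\iN\calm_e$, and then checking independence of that presentation. That can be made to work, but it is exactly the content of the equivalence from \cite{G}; citing it is cleaner.
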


\begin{proof}
First notice that for $m,n\iN\calm_e$ we have $\iHomM^\cala(m,n) 
\eq \iHom_{\calm_e}^{\cala_e}(m,n)$, and thus the restriction of the relative Serre 
functor obeys $\Se_\calm^\cala|_{\calm_e}^{} \,{=}\; \Se_{\calm_e}^{\cala_e}$. The 
pivotal structure of $\cala$ turns both $\Se_\calm^\cala$ and $\Se_{\calm_e}^{\cala_e}$ 
into module functors. According to Proposition \ref{graded_duals}, relative Serre 
functors are grading preserving, and thus a pivotal structure on ${}_\cala\calm$ is 
an isomorphism $\id_\calm \Rarr\cong \Se_\calm^\cala$ in $(\dualcat)_e^{}$. Now 
Theorem 3.3 of \cite{G} implies that restriction induces an equivalence
  \be
  \left(\dualcat\right)_e = \Fun_\cala(\calm,\calm)_e
  \simeq \Fun_{\cala_e}(\calm_e,\calm_e) = (\cala_e)_{\calm_e}^{\,*}
  \ee
under which a module natural isomorphism $\id_\calm\Rarr{\cong}\Se_\calm^\cala$ 
corresponds to a module natural isomorphism
$\id_{\calm_e}\Rarr{\cong}\Se_{\calm_e}^{\cala_e}$.
\end{proof}

\begin{defi}
Two $G$-graded pivotal categories $\cala$ and $\calb$ are said to be 
\emph{graded pivotal Morita equivalent} iff there exists a $G$-graded pivotal 
$\cala$-module category $\calm$ together with 
a $G$-graded pivotal equivalence $\calb\,{\simeq}\,\overline{\dualcat}$.
\end{defi}

The Drinfeld center of a pivotal tensor category inherits a pivotal structure 
\Cite{Ex.\,7.13.6}{EGno}. In particular for a $G$-graded pivotal tensor category 
$\cala$, the Drinfeld center $\calz(\cala)$ has a canonical pivotal structure 
$p\colon \id_{\calz(\cala)}\Rarr{\cong}(-)\dd$. According to \Cite{Thm.\ 1.17}{KO},
$p$ serves as pivotal structure for ${}_L\Mod(\calz(\cala)) \eq \calz_G(\cala)$.

\begin{prop}
Let $\calm$ be a $G$-graded pivotal $\cala$-module category. The braided $G$-crossed
equivalence \eqref{G_schauenburg} is pivotal.
\end{prop}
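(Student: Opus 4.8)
The plan is to deduce the statement from the pivotality of the Schauenburg equivalence $\Sigma\colon \calz(\cala)\xsimeq\calz(\,\overline{\dualcat}\,)$ of \eqref{schauenburg}, which was already established in Proposition \ref{schauenburg_pivotal}, by observing that \eqref{G_schauenburg} is nothing but the de-equivariantization of $\Sigma$. First I would record that, directly from its definition, $\Sigma$ maps the canonical copy of $\text{Rep}(G)$ inside $\calz(\cala)$ onto the one inside $\calz(\,\overline{\dualcat}\,)$: the object $\mathbf{1}^n$ with its half-braiding $\gamma_{-,\mathbf{1}^n}$ is sent to the module functor $\mathbf{1}^n\act-\cong\id_\calm^{\oplus n}$ with the corresponding half-braiding. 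Consequently $\Sigma$ carries the distinguished commutative special Frobenius algebra $L$ to the analogous algebra $L'$ in $\calz(\,\overline{\dualcat}\,)$, compatibly with unit, multiplication, counit and comultiplication, so that $\Sigma$ restricts to the braided $G$-crossed equivalence ${}_L\Mod(\calz(\cala))\xsimeq {}_{L'}\Mod(\calz(\,\overline{\dualcat}\,))$ underlying \eqref{G_schauenburg}, the $L$-module structures being transported by $\Sigma$; the only new content to be verified is thus the compatibility with the pivotal structures.

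Next I would recall how the two pivotal structures enter. By \Cite{Thm.\,1.17}{KO} the pivotal structure $p$ of $\calz(\cala)$ already serves as a pivotal structure on $\calz_G(\cala)={}_L\Mod(\calz(\cala))$: for an $L$-module $X$ the component $p_X\colon X\xcong X\dd$ of the pivotal structure of $\calz(\cala)$ is automatically a morphism of $L$-modules, and the dual of $X$ in ${}_L\Mod(\calz(\cala))$ is built from the dual $X^\vee$ in $\calz(\cala)$ endowed with the $L$-action determined by the commutative Frobenius structure of $L$; the same holds verbatim for $\calz_G(\,\overline{\dualcat}\,)$, starting from the pivotal structure that $\calz(\,\overline{\dualcat}\,)$ inherits from the pivotal structure of $\overline{\dualcat}$. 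With this at hand, the only thing to check is that \eqref{G_schauenburg} intertwines these two structures, i.e.\ that its value on $p_X$ is $p_{\Sigma(X)}$ modulo the canonical coherence isomorphism relating \eqref{G_schauenburg} to the duals of the two module categories.

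To finish, I would argue that since $\Sigma$ is an equivalence of braided tensor categories it is compatible with right duals, so the $L$-module $X^\vee$ is sent to the $L'$-module $\Sigma(X)^\vee$ on the nose; hence the coherence isomorphism of \eqref{G_schauenburg} with respect to duals is just the de-equivariantization of the corresponding coherence isomorphism $\Sigma(X\dd)\cong\Sigma(X)\dd$ of $\Sigma$, and no new coherence datum has to be produced. Under this identification Proposition \ref{schauenburg_pivotal} — which says precisely that $\Sigma(p_X)=p_{\Sigma(X)}$ in $\calz(\,\overline{\dualcat}\,)$ — immediately yields the desired identity of $L'$-module morphisms, completing the argument. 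The step I expect to require the most care is the second one: one has to make sure that the description of the dual object and of the pivotal morphism inside ${}_L\Mod(\calz(\cala))$ from \Cite{Thm.\,1.17}{KO} is natural enough that applying $\Sigma$ commutes with forming module-duals strictly up to the canonical coherence data, rather than merely up to some uncontrolled isomorphism; once this naturality is pinned down — it rests entirely on $\Sigma$ preserving $L$ with all of its algebra and Frobenius data — the rest is formal.
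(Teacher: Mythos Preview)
Your proposal is correct and follows essentially the same approach as the paper: the paper's own proof simply observes that by Proposition~\ref{schauenburg_pivotal} the equivalence~\eqref{schauenburg} is pivotal, and that~\eqref{G_schauenburg} is induced from it via de-equivariantization and hence preserves $p$ as well. You have just unpacked in detail what ``induced'' means here (that $\Sigma$ sends $L$ to $L'$ and that the pivotal structure on ${}_L\Mod$ from \cite{KO} is the restriction of $p$), which is exactly the content behind the paper's two-sentence argument.
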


\begin{proof}
By Proposition \ref{schauenburg_pivotal} the equivalence \eqref{schauenburg} is pivotal.
Further, \eqref{G_schauenburg} is induced by \eqref{schauenburg} and thus preserves 
$p$ as well.
\end{proof}

\begin{cor}
If two $G$-graded pivotal categories $\cala$ and $\calb$ are graded pivotal Morita 
equivalent, then their equivariant centers $\calz_G(\cala)$ and $\calz_G(\calb)$ 
are equivalent as pivotal braided $G$-crossed tensor categories.
\end{cor}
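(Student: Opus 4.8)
The plan is to follow the proof of Theorem~\ref{thm:A=B-ZA=ZB} almost verbatim, with the Schauenburg equivalence \eqref{schauenburg} replaced by its de-equivariantized version \eqref{G_schauenburg} and with Proposition~\ref{schauenburg_pivotal} replaced by the preceding Proposition. Suppose $\cala$ and $\calb$ are graded pivotal Morita equivalent; by definition there are a $G$-graded pivotal $\cala$-module category $\calm$ and a $G$-graded pivotal tensor equivalence $\calb\,{\simeq}\,\overline{\dualcat}$. The preceding Proposition tells us that the associated braided $G$-crossed tensor equivalence \eqref{G_schauenburg},
  \be
  \calz_G(\cala)\xsimeq\calz_G(\,\overline{\dualcat}\,)\,,
  \ee
is pivotal for the pivotal structures that $\calz_G(\cala)$ and $\calz_G(\overline{\dualcat})$ inherit, via \Cite{Thm.\,1.17}{KO}, from the pivotal Drinfeld centers $\calz(\cala)$ and $\calz(\overline{\dualcat})$.

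It then remains to transport this pivotal braided $G$-crossed structure along $\calb\,{\simeq}\,\overline{\dualcat}$. First I would observe that any $G$-graded tensor equivalence $F\colon \calb\Rarr{\simeq}\overline{\dualcat}$ induces a braided equivalence $\calz(F)\colon \calz(\calb)\Rarr{\simeq}\calz(\overline{\dualcat})$ which intertwines the canonical braided functors $\text{Rep}(G)\rarr{}\calz(-)$ described above (because $F$ is monoidal and grading preserving, the half-braidings $\id\otimes\rho(g)$ are matched); hence $\calz(F)$ sends the regular algebra $L$ of $\calz(\calb)$ to that of $\calz(\overline{\dualcat})$ and therefore descends to a braided $G$-crossed tensor equivalence $\calz_G(\calb)\xsimeq\calz_G(\overline{\dualcat})$ of the de-equivariantizations, cf.\ \Cite{Sect.\,4.4}{DGNO}. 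Moreover $\calz(F)$ is pivotal, since $F$ is pivotal and the pivotal structure on a Drinfeld center is the fixed-object one $p_{(b,\sigma)}\,{:=}\,p_b$ \Cite{Ex.\,7.13.6}{EGno}; as the pivotal structures on the equivariant centers are just restrictions of these, the descended functor is pivotal as well. Composing,
  \be
  \calz_G(\calb)\xsimeq\calz_G(\,\overline{\dualcat}\,)\xsimeq\calz_G(\cala)
  \ee
yields the desired equivalence of pivotal braided $G$-crossed tensor categories.

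The only genuinely non-formal point, and hence the main (mild) obstacle, is the functoriality claim used in the second paragraph: that a $G$-graded \emph{pivotal} tensor equivalence induces a \emph{pivotal} braided $G$-crossed equivalence of equivariant centers. Unwinding the definition of the braided embedding $\text{Rep}(G)\rarr{}\calz(-)$ and of the algebra object $L$ shows that compatibility with these data, and with the fixed-object pivotal structure, is automatic, so that $\calz_G$ is an invariant of the $G$-graded pivotal equivalence class of a $G$-graded pivotal category. Granting this, the Corollary reduces to the preceding Proposition exactly as in the non-equivariant Theorem~\ref{thm:A=B-ZA=ZB}.
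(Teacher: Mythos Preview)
Your proposal is correct and follows the same route the paper takes: the Corollary is stated immediately after the preceding Proposition and is treated as its direct consequence, with no separate proof given. You are simply more explicit than the paper about the transport step along $\calb\simeq\overline{\dualcat}$, spelling out why a $G$-graded pivotal tensor equivalence induces a pivotal braided $G$-crossed equivalence of equivariant centers; the paper leaves this as understood.
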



\vskip 3.5em

\noindent
{\sc Acknowledgments:}\\[.3em]
We thank David Reutter and Harshit Yadav for helpful comments.
JF is supported by VR under project no.\ 2017-03836. DJ and CS are partially
supported by DFG under Germany's Excellence Strategy - EXC 2121 ``Quantum Universe''
- 390833306. CG is partially supported by the Research Fund of the School
of Sciences at the Universidad de los Andes under project INV-2023-162-2830.

\newpage
\newcommand\wb{\,\linebreak[0]} \def\wB {$\,$\wb}
\newcommand\Arep[2]  {{\em #2}, available at {\tt #1}}
\newcommand\Bi[2]    {\bibitem[#2]{#1}}
\newcommand\inBO[9]  {{\em #9}, in:\ {\em #1}, {#2}\ ({#3}, {#4} {#5}), p.\ {#6--#7} {\tt [#8]}}
\newcommand\J[7]     {{\em #7}, {#1} {#2} ({#3}) {#4--#5} {{\tt [#6]}}}
\newcommand\JJ[7]     {{\em #7}, {#1}{#2} ({#3}) {#4#5}{{\tt [#6]}}}
\newcommand\JO[6]    {{\em #6}, {#1} {#2} ({#3}) {#4--#5} }
\newcommand\JP[7]    {{\em #7}, {#1} ({#3}) {{\tt [#6]}}}
\newcommand\Jpress[7]{{\em #7}, {#1} {} (in press) {} {{\tt [#6]}}}
\newcommand\Jtoa[7]  {{\em #7}, {#1} {} (to appear) {} {{\tt [#6]}}}
\newcommand\BOOK[4]  {{\em #1\/} ({#2}, {#3} {#4})}
\newcommand\PhD[2]   {{\em #2}, Ph.D.\ thesis #1}
\newcommand\Prep[2]  {{\em #2}, preprint {\tt #1}}
\newcommand\uPrep[2] {{\em #2}, unpublished preprint {\tt #1}}
\def\adma  {Adv.\wb Math.}
\def\alrt  {Algebr.\wb Represent.\wB Theory}   
\def\anma  {Ann.\wb Math.}
\def\anop  {Ann.\wb Phys.}
\def\apcs  {Applied\wB Cate\-go\-rical\wB Struc\-tures}
\def\atmp  {Adv.\wb Theor.\wb Math.\wb Phys.}   
\def\bbms   {Bull.\wb Belg.\wb Math.\wb Soc.\wb Simon Stevin}
\def\cocm  {Com\-mun.\wb Con\-temp.\wb Math.}
\def\comp  {Com\-mun.\wb Math.\wb Phys.}
\def\coma  {Con\-temp.\wb Math.}
\def\duke  {Duke\wB Math.\wb J.}
\def\geat  {Geom.\wB and\wB Topol.}
\def\imrn  {Int.\wb Math.\wb Res.\wb Notices}
\def\jims  {J.\wb Indian\wb Math.\wb Soc.}
\def\jams  {J.\wb Amer.\wb Math.\wb Soc.}
\def\joal  {J.\wB Al\-ge\-bra}
\def\joms  {J.\wb Math.\wb Sci.}
\def\jktr  {J.\wB Knot\wB Theory\wB and\wB its\wB Ramif.}
\def\jpaa  {J.\wB Pure\wB Appl.\wb Alg.}
\def\kyjm  {Ky\-o\-to J.\ Math.}
\def\mama  {ma\-nu\-scripta\wB mathematica\wb}
\def\mams  {Memoirs\wB Amer.\wb Math.\wb Soc.}
\def\momj  {Mos\-cow\wB Math.\wb J.}
\def\nupb  {Nucl.\wb Phys.\ B}
\def\pams  {Proc.\wb Amer.\wb Math.\wb Soc.}
\def\pspm  {Proc.\wb Symp.\wB Pure\wB Math.}
\def\quto  {Quantum Topology}
\def\sema  {Selecta\wB Mathematica}
\def\sigm  {SIGMA}
\def\slnm  {Sprin\-ger\wB Lecture\wB Notes\wB in\wB Mathematics}
\def\taac  {Theo\-ry\wB and\wB Appl.\wb Cat.}
\def\tams  {Trans.\wb Amer.\wb Math.\wb Soc.}
\def\toap  {Topology\wB Applic.}
\def\topo  {Topology}
\def\trgr  {Trans\-form.\wB Groups}
\def\alnt  {Algebra\wB\&\wB Number\wB Theory}
\def\jlms  {J.\wB London\wB Math.\wb Soc.}
\def\jhep  {J.\wb High\wB Energy\wB Phys.}

\small

 \end{document}